\chardef\bslash=`\\ 
\def\verbatim{\interlinepenalty\@M \@verbatim
  \leftskip\@totalleftmargin\advance\leftskip2pc
  \frenchspacing\@vobeyspaces \@xverbatim}
\newtheorem{thm}{Theorem}[section]
\newtheorem{cor}[thm]{Corollary}
\newtheorem{lem}[thm]{Lemma}
\newtheorem{prop}[thm]{Proposition}
\newtheorem{rem}[thm]{Remark} 
\numberwithin{equation}{section}
\newcommand{\begeq}{\begin {equation}}
\newcommand{\bs}{\begin {split}}
\newcommand{\es}{\end {split}}
\newcommand{\bp}{\begin {prop}}
\newcommand{\ep}{\end {prop}}
\newcommand{\bt}{\begin {thm}}
\newcommand{\et}{\end {thm}}
\newcommand{\bc}{\begin {cor}}
\newcommand{\ec}{\end {cor}}
\newcommand{\bl}{\begin {lem}}
\newcommand{\el}{\end {lem}}
\newcommand{\bpf}{\begin {proof}}
\newcommand{\epf}{\end {proof}}
\newcommand{\bi}{\begin {itemize}}
\newcommand{\ei}{\end {itemize}}
\newcommand{\ben}{\begin {enumerate}}
\newcommand{\een}{\end {enumerate}}
\newcommand{\brem}{\begin {rem}}
\newcommand{\erem}{\end {rem}}
\newcommand{\ZZ}{{\mathbb Z}}
\newcommand{\CC}{{\mathbb C}}
\newcommand{\NN}{{\mathbb N}}
\newcommand{\Rd}{ {\Bbb R}^d}
\newcommand{\Zd} {{\Bbb Z}^d}
\begin{document}
\bibliographystyle{plain}

\title[Stability on Weighted $L^p$ spaces] {Stability of Localized Integral Operators on Weighted $L^p$ spaces
}

\author{Kyung Soo Rim,
Chang Eon Shin
 and  Qiyu Sun}

 \thanks{The first two authors are partially supported by Basic Science Research Program through the National Research Foundation of Korea (NRF) funded by the Ministry of Education, Science and Technology (2010-0028130).}

\address{Kyung Soo Rim: Department of Mathematics, Sogang University, Seoul,  Korea. Email: ksrim@sogang.ac.kr}

\address{Chang Eon Shin:  Department of Mathematics, Sogang University, Seoul,  Korea. Email: shinc@sogang.ac.kr }

\address{Qiyu Sun: Department of Mathematics,  University of Central Florida,
Orlando, FL 32816, USA. Email: qiyu.sun@ucf.edu}


\date{\today }

\subjclass{47G10,   45P05,  47B38,  31B10, 42C99, 44A35, 46E30}

\keywords{Integral operator,  weighted function space,  Muckenhoupt weight, spectrum,  Bessel potential, infinite matrix, Wiener's lemma, bootstrap technique, reverse H\"older inequality, doubling measure}


\maketitle
\begin{abstract}  In this paper, we consider localized   integral operators 
  whose kernels
 have mild singularity near the diagonal and certain H\"older regularity and decay off the diagonal.
 Our model example  is the Bessel potential operator ${\mathcal J}_\gamma, \gamma>0$.
 We show that if such a localized integral operator  has  stability on a weighted  function space $L^p_w$ for some
 $p\in [1, \infty)$ and Muckenhoupt $A_p$-weight $w$, then it has
   stability on weighted  function spaces $L^{p'}_{w'}$ for all $1\le p'<\infty$ and Muckenhoupt $A_{p'}$-weights $w'$.
\end{abstract}

\section{Introduction}

Let $K$ be  a kernel function on $\Rd\times \Rd$. Define the minimal radial function   on $\Rd$
that is radially decreasing and   dominates the off-diagonal decay of the kernel  $K$ by
\begin{equation}
\label{rk.def} r_K(x):=\sup_{|y-y'|\ge |x|}|K(y,y')|.
\end{equation}
Here $|x|:=\max\{|x_1|,\cdots, |x_d|\}$ for $x:=(x_1, \cdots, x_d)\in\mathbb{R}^d$.
In this paper, we consider integral operators
 \begin{equation}
\label{integraloperator.def}
Tf(x):=\int_{\Rd} K(x,y) f(y)  dy
\end{equation}
whose kernel $K$ on $\Rd\times \Rd$ has its off-diagonal decay dominated by an integrable radially decreasing function on $\Rd$, i.e.,
 \begin{equation}\label{rkintegrable.eq}
 \|r_K\|_1:=\int_{\Rd} r_K(x) dx<\infty.
 \end{equation}
The model example of such an integral operator  is the Bessel potential \cite{steinbook70}
\begin{equation}
{\mathcal J}_\gamma f=\int_{\Rd}  G_\gamma(x-y) f(y) dy,\ \gamma>0,
\end{equation}
where the Bessel kernel $G_\gamma$ is defined with the help of Fourier transform by
$$\widehat G_\gamma(\xi_1, \ldots, \xi_d)=\big(1+|\xi_1|^2+\cdots+|\xi_d|^2)^{-\gamma/2},$$
and  the Fourier transform $\hat f$ of an integrable function $f$ is defined by
$\hat f(\xi)=\int_{\Rd} f(x) e^{-ix\xi} dx$.

\smallskip

For $1\le p<\infty$, we say that  a weight  $w$ on the $d$-dimensional Euclidean space $\Rd$ (i.e., a positive locally-integrable  function on $\Rd$)  is an
{\em $A_p$-weight}  if
\begin{equation}\label{apweight.eq1}
\Big(\frac{1}{|Q|} \int_Q w(x) dx\Big) \Big(\frac{1}{|Q|} \int_{Q} w(x)^{-\frac{1}{p-1}} dx\Big)^{p-1} \le A<\infty\quad {\rm for \ all\ cubes} \ Q\end{equation}
when $1<p<\infty$, and if
\begin{equation}
\label{apweight.eq2}\frac{1}{|Q|} \int_Q w(y) dy \le A \inf_{x\in Q} w(x)\quad {\rm for \ all\ cubes} \ Q \end{equation}
when $p=1$ \cite{fourieranalysisbook, rubiobook, steinbook93}. Here $|E|$ stands for  the Lebesgue measure of a measurable set $E\subset \Rd$.
 The {\em $A_p$-bound} of an $A_p$-weight $w$, to be denoted by $A_p(w)$,
    is the  smallest constant $A$ for which \eqref{apweight.eq1}  holds when $1<p<\infty$ (respectively \eqref{apweight.eq2}  holds when $p=1$).
Simple nontrivial example of $A_p$-weights is the polynomial weight   $w_\alpha(x):=|x|^\alpha$, which  is an $A_p$-weight if the exponent $\alpha$ of
 the polynomial weight $w_\alpha$ satisfies $-d<\alpha\le 0$ for $p=1$, and  $-d<\alpha<d(p-1)$ for $1<p<\infty$.

\smallskip

Denote by $I$ the identity operator, and by $L^p_w:=L^p_w(\Rd)$  the space of all measurable  functions  $f$ on $\Rd$ with   $\|f\|_{p, w}:=(\int_{\Rd} |f(x)|^p w(x) dx)^{1/p}<\infty$.
 A well-known result about the integral operator $T$ in  \eqref{integraloperator.def}
 is that it is bounded on  the weighted function space $L^p_w$ for any $p\in [1, \infty)$ and  $A_p$-weight $w$.
Furthermore  there exists an absolute  constant
 $C$, that depends on $p$ and $d$ only, such that
 \begin{equation} \label{lioboundedness.def}
\|Tf\|_{p,w} \le C (A_p(w))^{1/p} \|r_K\|_1
\|f\|_{p,w}
\end{equation}
for all $A_p$-weights $w$ and functions $f\in L^p_w$,
see also Proposition \ref{lioboundedness.prop1}.
In this paper, instead of  establishing  boundedness of the integral operator $T$ on $L^p_w$,
 we   consider stability  of integral operators $zI-T, z\in \CC$, on $L^p_w$, i.e.,
 there exists a positive constant $C$ such that
\begin{equation}
\|(zI-T)f\|_{p,w}\ge C \|f\|_{p,w} \ {\rm for \ all} \ f\in L^p_w.
\end{equation}
We will show that the stability of integral operators $zI-T, z\in \CC$, on $L^p_w$
for different $p\in [1, \infty)$ and $A_p$-weights $w$ are equivalent to each other, provided that the kernel $K$
of the integral operator $T$ is  assumed, in addition to its off-diagonal decay dominated by an integrable radially decreasing function,
to have certain H\"older regularity off the diagonal and mild singularity near the diagonal, i.e.,
  \begin{equation}
\label{kernelcondition} 
\|r_K\|_1
+\sup_{0<\delta\le 1}  \delta^{-\alpha} \|r_{\omega_\delta(K)}\|_1+ \sup_{0<\delta\le 1} \delta^{-\alpha} \|r_K \chi_{|\cdot|\le \delta}\|_1<\infty
 \end{equation}
for some $\alpha\in (0,1]$. Here for a kernel function $K$ on $\Rd\times \Rd$, its modified modulus of continuity $\omega_\delta(K)$  is defined by
\begin{equation}
\label{lio.eq2} \omega_\delta(K)(x,y)=\left\{\begin{array}{ll}
\sup_{|x'-x|, |y'-y|\le \delta} |K(x', y')-K(x,y)|  & {\rm if} \ |x-y|\ge 4\delta,\\
0 & {\rm otherwise}.\end{array}\right.
\end{equation}

\begin{thm}\label{liostability.tm} Let $z\in \CC$,  $K$ be a kernel function on $\Rd\times \Rd$ satisfying \eqref{kernelcondition} for some $\alpha\in (0,1]$, and let
$T$ be the integral operator in \eqref{integraloperator.def} with kernel $K$.
If $z I-T$ has stability on $L^p_w$ for some $1\le p<\infty$ and  $A_p$-weight $w$, then
it has  stability on  $L^{p'}_{w'}$ for all $1\le p'<\infty$ and  $A_{p'}$-weights $w'$.
\end{thm}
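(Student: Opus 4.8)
Throughout write $A:=zI-T$ and let $\mathcal{A}$ denote the class of operators of the form $cI+S$ with $c\in\CC$ and $S$ an integral operator whose kernel satisfies \eqref{kernelcondition} for some $\alpha\in(0,1]$. The idea is to pass from $A$ to an associated infinite matrix, to establish (or invoke) a Wiener-type \emph{stability} lemma at the matrix level, and to transfer the conclusion back. \emph{Step 1 (a Banach $\ast$-algebra of localized operators).} First I would check that $\mathcal{A}$, normed by $|c|$ plus the three quantities appearing in \eqref{kernelcondition} (with an infimum over admissible $\alpha$), is a Banach $\ast$-algebra. Closure under composition follows because, writing $|K_i(x,y)|\le r_{K_i}(x-y)$, the product kernel is dominated off the diagonal by $r_{K_1}\ast r_{K_2}$ — again an integrable radially controlled function — while the modulus-of-continuity data $\omega_\delta$ and the near-diagonal tails of a product are estimated subadditively from those of the factors. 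Closure under adjoints is immediate, since every term in \eqref{kernelcondition} is symmetric under $(x,y)\mapsto(y,x)$. Then Proposition~\ref{lioboundedness.prop1} and the triangle inequality give that each element of $\mathcal{A}$ is bounded on $L^{p'}_{w'}$ for every $1\le p'<\infty$ and $A_{p'}$-weight $w'$, with operator norm at most $C(1+(A_{p'}(w'))^{1/p'})$ times its $\mathcal{A}$-norm.

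\emph{Step 2 (discretization: continuous stability $\Leftrightarrow$ matrix stability).} Fix the partition $\{Q_k:=k+[0,1)^d\}_{k\in\zd}$ with indicators $\chi_k$ (and, if convenient, a smooth partition of unity subordinate to a slightly enlarged cover), and attach to $A$ the infinite matrix $\mathbf A$ recording the sizes of the blocks $\chi_jA\chi_k$, together with a companion matrix recording the $\omega_\delta$-data of the corresponding kernel pieces. Since $|K(x,y)|\le r_K(x-y)$ with $\|r_K\|_1<\infty$, the Schur test bounds each block on $L^2$ and yields $\mathbf A(j,k)\le|c|\,\delta_{jk}+h(j-k)$ for some $h\in\ell^1(\zd)$, so $\mathbf A$ lies in a convolution-dominated matrix algebra. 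The crucial assertion is: for $1\le p<\infty$ and $w\in A_p$, $A$ has stability on $L^p_w$ if and only if $\mathbf A$ has stability on the weighted sequence space $\ell^p_{\tilde w}$ with $\tilde w(k):=\int_{Q_k}w$; here $\tilde w$ is an $A_p$-weight on $\zd$ with bound controlled by $A_p(w)$, by the reverse H\"older inequality and the doubling property of $A_p$-weights. This equivalence uses \eqref{kernelcondition} in full strength: $\|f\|_{p,w}\approx\|(\|\chi_kf\|_{p,w})_k\|_{\ell^p}$; the off-diagonal decay makes the leakage of $A$ across far-apart cubes a convolution-dominated, hence small, perturbation; the $\omega_\delta(K)$ term controls the oscillation of $Tf$ inside cubes of side $\delta$, so that replacing $T$ by its discretization costs only $O(\delta^\alpha)$ times a convolution-dominated error; and the near-diagonal term $\|r_K\chi_{|\cdot|\le \delta}\|_1$ absorbs the singularity of $K$ on the diagonal. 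Choosing $\delta$ small — uniformly over $A_p$-weights of a given bound — makes the total error a small fraction of the stability constant, delivering both directions. \emph{This step is the main obstacle}: everything else is soft, but establishing this faithful-discretization equivalence with constants depending only on $p$, $d$ and $A_p(w)$ is precisely what the three hypotheses in \eqref{kernelcondition} are tuned to permit, and keeping the error terms under control uniformly over $A_p$-weights is the technical core.

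\emph{Step 3 (Wiener's lemma for localized matrices, and conclusion).} I would then use the stability analogue of Wiener's lemma in the convolution-dominated matrix algebra: such a matrix $\mathbf A$ has stability on $\ell^p_{\tilde w}$ for \emph{some} $1\le p<\infty$ and $A_p$-weight $\tilde w$ on $\zd$ if and only if it has stability on $\ell^{p'}_{\tilde w'}$ for \emph{all} $1\le p'<\infty$ and $A_{p'}$-weights $\tilde w'$ on $\zd$ — equivalently, if and only if $\mathbf A^*\mathbf A$ is invertible inside the algebra. This is where the bootstrap technique enters: $\ell^2$-stability of $\mathbf A$ makes the positive self-adjoint matrix $\mathbf A^*\mathbf A$ invertible on $\ell^2$; a commutator/bootstrap argument upgrades boundedness of $(\mathbf A^*\mathbf A)^{-1}$ on $\ell^2$ to membership in the algebra; then $(\mathbf A^*\mathbf A)^{-1}\mathbf A^*$ is a left inverse of $\mathbf A$ lying in the algebra, hence bounded on every $\ell^{p'}_{\tilde w'}$ (again using reverse H\"older for the $A_p$-weighted boundedness of convolution-dominated matrices), which forces stability there; the passage from $\ell^p_{\tilde w}$-stability back to $\ell^2$-stability is handled by the same circle of ideas. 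Chaining: stability of $zI-T$ on $L^p_w$ $\Rightarrow$ (Step 2) stability of $\mathbf A$ on $\ell^p_{\tilde w}$ $\Rightarrow$ (Step 3) stability of $\mathbf A$ on $\ell^{p'}_{\tilde w'}$ for all admissible $(p',w')$ $\Rightarrow$ (Step 2 applied to $(p',w')$) stability of $zI-T$ on $L^{p'}_{w'}$. Equivalently, Step 3 can be run at the operator level: $L^2$-stability of $zI-T$ makes $(zI-T)^*(zI-T)$ invertible in $\mathcal{A}$, so $S:=((zI-T)^*(zI-T))^{-1}(zI-T)^*\in\mathcal{A}$ is a left inverse of $zI-T$ bounded on every $L^{p'}_{w'}$, and $\|f\|_{p',w'}=\|S(zI-T)f\|_{p',w'}\le\|S\|\,\|(zI-T)f\|_{p',w'}$ gives the desired stability.
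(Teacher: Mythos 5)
There is a genuine gap, and it sits exactly where you flag "the same circle of ideas": Step 3. The "stability analogue of Wiener's lemma" you invoke --- that a convolution-dominated matrix has stability on $\ell^{p}_{\tilde w}$ for \emph{some} $1\le p<\infty$ and \emph{some} discrete $A_p$-weight $\tilde w$ if and only if it has stability on all such spaces, equivalently iff $\mathbf A^*\mathbf A$ is invertible in the algebra --- is not an available off-the-shelf result; it is precisely the discretized form of Theorem \ref{liostability.tm} itself. The direction you would actually need first, namely passing from stability on $\ell^p_{\tilde w}$ with an \emph{arbitrary} exponent $p$ and an \emph{arbitrary} Muckenhoupt weight down to $\ell^2$-stability (so that positivity of $\mathbf A^*\mathbf A$ can even enter), is the technical core of the paper, and your proposal disposes of it in one clause. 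The paper supplies it by a quantitative three-stage bootstrap (Lemmas \ref{liostability.lem1}--\ref{liostability.lem3}): first the weight is deformed along the family $w^r$ in small steps $s$ with $|s|\le\delta_0$, then removed entirely once $r\le\delta_1$, then the exponent $p$ is moved to $p'$, then the weight $w'$ is rebuilt. Each step costs a factor like $2^{2nd|s|}$ or $(2^p(A_p(w))^2)^{(2^d+1)rn}$ at discretization level $n$, and the whole scheme only closes because these losses are beaten by the $2^{-n\alpha}$ approximation rate of Proposition \ref{lioapproximation.prop2}; the admissible step sizes $\delta_0,\delta_1,\delta_2$ are dictated by the refined doubling, BMO and reverse H\"older estimates of the Appendix, whose explicit dependence on $A_p(w)$ is the whole point. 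None of this is replaced by citing a matrix Wiener lemma, because such lemmas (Sj\"ostrand-type inverse-closedness in ${\mathcal B}(\ell^2)$) only help once you already stand at $p=2$ with trivial weight. The same objection applies to your operator-level shortcut: "$L^2$-stability makes $(zI-T)^*(zI-T)$ invertible \emph{in} $\mathcal A$" is a Wiener's lemma for kernels with a diagonal singularity satisfying \eqref{kernelcondition}, which is not established here (the known result \cite{sunacha08} requires kernels with no singularity and H\"older regularity near the diagonal), and the paper explicitly leaves the relation between stability and invertibility/spectrum open.

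Two secondary points. In Step 2, a matrix "recording the sizes of the blocks $\chi_jA\chi_k$" can only dominate $A$, so it is useless for the lower-bound (stability) direction; what is needed, and what the paper uses, is the signed discretization $A_n$ of $P_nTP_n$ at levels $n\ge n_0$, where $n_0$ depends on the stability constant through the $O(2^{-n\alpha})$ error, together with the converse passage \eqref{discretetocontinuouspf} from discrete stability back to $L^p_w$-stability, which also uses $z\ne 0$ (Corollary \ref{zerospectrum.cor}). And in Step 1, making $\mathcal A$ a Banach algebra with a norm defined by an infimum over $\alpha$ is delicate (the H\"older exponent degrades under composition), but this is moot since the paper never needs an algebra of operators: it works with stability constants directly, via the commutator off-diagonal estimates of Proposition \ref{liodiscretization.prop3} and the Beurling-algebra iteration leading to \eqref{liostability.lem1.eq14}.
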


 Denote by $s_{p,w}(T)$  the set of all complex numbers $z$ such that $zI-T$ does not have stability on $L^p_w$, and by $s_p(T)$ instead of $s_{p, w_0}(T)$ for short when  $w$ is the  trivial weight $w_0\equiv 1$.
Then  Theorem \ref{liostability.tm} can be reformulated  as follows:
\begin{equation}\label{lioresiduespectrum.eq}
s_{p,w}(T)=s_{2}(T)
\end{equation}
 for all $1\le p< \infty$ and $A_p$-weights $w$, provided that the kernel of the integral operator $T$ satisfies \eqref{kernelcondition}.
We remark that for the  operator $zI-T$, it is established in \cite{shincjfa09} the equivalence of its stability on unweighted function spaces $L^p$
for different exponents $p\in [1, \infty]$, i.e.,
 \begin{equation}s_p(T)=s_{2}(T)\end{equation}
  for all $1\le p\le \infty$. The assumption on the kernel $K$ of the operator $T$ in  the above equivalence
is that it has certain  H\"older regularity and  its off-diagonal decay dominated by a function in the Wiener amalgam space ${\mathcal W}_1$,
 the space containing all measurable functions $h$ on $\Rd$ with 
$\| h\|_{\mathcal{W}_1}:=\sum_{k\in\mathbb{Z}^d} \sup_{x\in [-1/2,1/2)^d}|h(k+x)|<\infty$.
More precisely, the kernel $K$ satisfies the following condition:
\begin{equation}\label{liostability.rem.eq1}
\Big\| \sup_{y\in\mathbb{R}^d} |K(y,\cdot+y)|\Big \|_{\mathcal{W}_1} +\sup_{0<\delta\le 1} \delta^{-\alpha} \Big\| \sup_{y\in\mathbb{R}^d}\tilde \omega_\delta(K)(y,\cdot +y)\Big\|_{\mathcal{W}_1}<\infty
\end{equation}
for some $\alpha\in (0,1]$, where
the module of continuity $\tilde \omega_\delta(K), \delta>0$, of a kernel $K$ on $\Rd\times \Rd$ is defined by
$$
\tilde \omega_\delta( K)(x, y)=\sup_{\max(|x'-x|, |y'-y|)\le\delta}|K(x', y')-K(x,y)|\quad {\rm for \ all} \ x, y\in \Rd
$$
c.f. the modified module  of continuity $\omega_\delta(K)$ of a kernel $K$ in \eqref{lio.eq2}.
The assumptions  \eqref{kernelcondition} and  \eqref{liostability.rem.eq1}  on kernels  are not comparable. Kernels satisfying
\eqref{kernelcondition} could have certain blowup  near the diagonal
 while  kernels satisfying \eqref{liostability.rem.eq1} do not allow any singularity (and even require certain regularity) near the diagonal.
On the other hand,
 kernels satisfying \eqref{liostability.rem.eq1}
 have less requirement on the  decay far away from the diagonal than kernels satisfying \eqref{kernelcondition} do.

 \smallskip
 We say that an integral operator $T$ in \eqref{integraloperator.def} is {\em  of convolution type} (or a {\em convolution operator}) if its kernel $K$
 can be written as $K(x,y)=g(x-y)$ for some integrable function $g$ on $\Rd$ \cite{barnes90, hulanicki, kurbatovbook}. In this case, one may verify that
 $s_2(T)=\{\hat g(\xi)| \ \xi\in \Rd\}\cup \{0\}$. This together with \eqref{lioresiduespectrum.eq}
 implies that
 $$ s_{p,w} (T)= \{\hat g(\xi)| \ \xi\in \Rd\}\cup \{0\}$$
 for all $1\le p<\infty$ and $A_p$-weight $w$,  provided that
 $Tf(x)=\int_{\Rd} g(x-y) f(y) dy$ for some integrable function $g$ on $\Rd$ and the kernel  $g(x-y)$ satisfies \eqref{kernelcondition}.
 Thus for the Bessel potentials ${\mathcal J}_\gamma, \gamma>0$, we have that
 $ s_{p,w} ({\mathcal J}_\gamma)= [0,1]$
 for all $1\le p<\infty$ and $A_p$-weight $w$, which is new up to our knowledge.

 \smallskip

 Denote by $\sigma_{p,w}(T)$  the spectrum of the operator $T$ on $L^p_w$ and by $\sigma_p(T)$ instead of $\sigma_{p, w_0}(T)$ for short when  $w$ is the  trivial weight $w_0\equiv 1$.
   Clearly we have that
 \begin{equation}s_{p,w}(T)\subset \sigma_{p,w}(T)
 \end{equation}
 for all bounded operators $T$ on $L^p_w$.  We are working on the problem whether or not the above inclusion
 is indeed an equality when the kernel of the operator $T$ satisfies \eqref{kernelcondition}.
 The reader may refer to
\cite{barnes90, baskakov11, brandenburg75, farrell10,  hulanicki, kurbatovbook, pytlik, shincjfa09}
 for spectra $\sigma_{p,w}(T)$ of  various integral operators $T$, and \cite{grochenigsurvey, suntams07, sunconst10, sunacha08}
 for its connection to Wiener's lemma for infinite matrices.

\smallskip

The paper is organized as follows. In Section \ref{preliminary.section}, we provide some preliminary results on the boundedness, approximation and discretization of the integral operator $T$  in \eqref{integraloperator.def} on weighted function spaces $L^p_w$, and also the boundedness on weighted sequence spaces and off-diagonal decay for the discretization  of the integral operator $T$  in \eqref{integraloperator.def} at different levels.  The main result of this paper is Theorem \ref{liostability.tm}, whose proof is given in  Section \ref{liostability.section}.
Some refinements of doubling  measure property and reverse H\"older inequality for  Muckenhoupt $A_p$-weights are
included in the appendix.

\smallskip

In this paper, we will use the following notation.
${\mathbb Z}_+:= {\mathbb N}\cup\{0\}$;  $\ell^p_w:=\ell^p_w(\Lambda)$ is
the space of all  weighted $p$-summable column vectors $c=(c(\lambda))_{\lambda\in \Lambda}$ with
 $\|c\|_{p,w}:=(\sum_{\lambda\in \Lambda} |c(\lambda)|^p w(\lambda))^{1/p}<\infty$, where $1\le p<\infty$ and  $w=(w(\lambda))_{\lambda\in \Lambda}$ is a weight; $\langle g_1, g_2\rangle:=\int_{\Rd} g_1(x) \overline{g_2(x)} dx$  provided that $g_1 g_2$ is integrable;
${\mathcal A}_p, 1\le p<\infty$, is the set of all $A_p$-weights;
 $kQ$ stands for the cube with the
 center same as the one of the  given cube $Q$ and   the radius $k$ times the one of cube $Q$;  $b_K$ is the function on the positive axis such that $b_K(|x|)=r_K(x)$ is the minimally radically decreasing function in \eqref{rk.def} that dominates the off-diagonal decay of a kernel $K$ on $\Rd\times \Rd$; and
 $C$ denotes an absolute constant which could be different at different occurrences.

\section{Preliminary}\label{preliminary.section}

We divide this section  into two parts.
In the first part of this section, we consider the boundedness, approximation and discretization of
an integral operator
 whose kernel has certain off-diagonal decay and H\"older regularity.
 In the first subsection we recall that
an integral operator, whose  kernel  has its off-diagonal decay  dominated by an integrable radially decreasing function,
is a bounded operator on $L^p_w$ for any $1\le p<\infty$ and $A_p$-weight $w$, see Proposition \ref{lioboundedness.prop1}.
Define   $P_n, n\in {\mathbb Z}$, on $L^p_w$ by 
\begin{equation}\label{pn.def}
P_nf =\sum_{\lambda\in 2^{-n} \Zd} \langle f, \phi_{n,2^n \lambda}\rangle \phi_{n,2^n\lambda},\
f\in L^p_w,
\end{equation}
where  $\phi_{n,k}=2^{nd/2} \chi_{[-1/2, 1/2)^d}(2^n \cdot-k),\ n\in \ZZ, k\in \Zd$.
For $p=2$ and the trivial weight $w\equiv 1$, $P_n, n\in \ZZ$, are  projection operators onto  $V_n:=P_nL^2$, which form a multiresolution analysis associated with the Haar  wavelet system
\cite{daubecheiesbook}. In the second subsection, we prove that  an integral operator $T$  with its kernel
 having certain off-diagonal decay, mild singularity near the diagonal  and  H\"older regularity can be approximated by $P_nT, TP_n$ and $P_nTP_n, n\in \ZZ$, in the operator norm on $L^p_w$, see Proposition \ref{lioapproximation.prop2}. As a consequence of the above approximation, we conclude that zero is  in the spectrum of a localized integral operator, see Corollary \ref{zerospectrum.cor}.
We call the operator $P_nTP_n$  the {\em discretization of the integral operator $T$ at $n$-th level}, as they are closely related to  infinite matrices
 \begin{equation}\label{an.def}
A_n:=\big(a_n(\lambda,\lambda')\big)_{\lambda, \lambda'\in 2^{-n}\Zd},\ n\in \ZZ\end{equation}
where
\begin{equation*}
a_n(\lambda, \lambda')=2^{nd} \int_{\Rd} \int_{\Rd} \phi_{n, 2^n\lambda}(x) K(x,y) \phi_{n, 2^n\lambda'}(y) dydx, \ \lambda, \lambda'\in 2^{-n} \Zd,\end{equation*}
see Proposition \ref{liodiscretization.prop1} of the third subsection. The same discretization has been used in \cite{shincjfa09, sunacha08}
to establish Wiener's lemma  and stability for localized integral operators on unweighted function spaces $L^p, 1\le p<\infty$.

In the second part of this section, we consider the boundedness  and off-diagonal decay property of   discretization matrices $A_n, n\in \ZZ$.
 Given a locally integrable positive function $w$, define its {\em discretization at $n$-th level} by
\begin{equation}\label{wnlambda.def} w_n:=(w_n(\lambda))_{\lambda\in 2^{-n}\Zd},
\end{equation} where
$w_n(\lambda)=2^{nd} \int_{\lambda+2^{-n}[-1/2, 1/2)^d} w(x) dx, \lambda\in 2^{-n}\Zd$.
As shown in Proposition \ref{discreteapweight.prop1},  discretization   of an $A_p$-weight $w$  at any  level
is  a discrete $A_p$-weight, see \eqref{discreteaqweight.eq1} and \eqref{discreteaqweight.eq2} for the definition.
 In
Proposition \ref{anboundedness.prop} of the fourth subsection, we show  that for every $n\in \ZZ$, the discretization matrix $A_n$ is bounded on  the weighted sequence space $\ell^p_{w_n}$
for any $1\le p<\infty$ and $A_p$-weight $w$.  The above proposition can be thought as a discretized version of Proposition \ref{lioboundedness.prop1}.
As we always assume in the paper that  the integral operator $T$ in \eqref{integraloperator.def}
has its kernel with  certain off-diagonal decay, 
its discretization matrices $A_n, n\in \ZZ$, have similar off-diagonal decay, see Proposition \ref{offdiagonaldecay.prop} of the fifth subsection.
For $N\ge 1$ and $k\in N\Zd$, define the  localization matrix   $\Psi_k^N$ on a sequence space on $2^{-n}\Zd$
by
\begin{equation}\label{lio.eq6}
(\Psi_k^Nc)(\lambda):= \psi_0((\lambda-k)/N) c(\lambda)\quad {\rm for} \  c:=(c(\lambda))_{\lambda\in 2^{-n}\Zd},
\end{equation}
 where
$\psi_0(x)=\max(\min(2-|x|, 1), 0)$.
In the sixth subsection, we  prove that the commutators $[A_n, \Psi_k^N]:=A_n \Psi_k^N-\Psi_k^N A_n$  between the  discretization matrices $A_n$ and the localization
matrices $\Psi_k^N$ have certain off-diagonal decay, see  Proposition \ref{liodiscretization.prop3}. The above off-diagonal decay property for the commutators $[A_n, \Psi_k^N]$ plays crucial roles in the proof of Theorem \ref{liostability.tm}. We remark that similar off-diagonal decay property for the
commutator $[A_n, \Psi_k^N]$ has been used in \cite{shincjfa09}  to establish the  equivalence of stability of a localized
 integral operator  on unweighted function space $L^p$ for different exponent $1\le p<\infty$.

\subsection{Boundedness of  localized integral operators} 

\begin{prop}
\label{lioboundedness.prop1}
Let $1\le p<\infty$ and $K$ be a kernel function on $\Rd\times \Rd$ whose off-diagonal decay is dominated by an integrable radially decreasing function
(i.e.,
\eqref{rkintegrable.eq} holds).
Then   the integral operator  $T$ in \eqref{integraloperator.def} with kernel $K$
 is a bounded operator on   $L_w^p$  for any $A_p$-weight $w$.
Furthermore,
\begin{equation} \label{lioboundedness.prop1.eq3}
\|Tf\|_{p,w} \le C (A_p(w))^{1/p} \|r_K\|_1
\|f\|_{p,w}
\end{equation}
for all weights $w\in {\mathcal A}_p$ and functions $f\in L^p_w$, where $C$ is an absolute  constant that depends on $p$ and $d$ only.
\end{prop}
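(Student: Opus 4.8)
The plan is to dominate $T$ pointwise by convolution with $r_K$ and then invoke the Muckenhoupt theory of weighted norm inequalities for the Hardy--Littlewood maximal function. From the definition \eqref{rk.def}, the pair $(x,y)$ is admissible in the supremum defining $r_K(x-y)$, so $|K(x,y)|\le r_K(x-y)$ for all $x,y\in\Rd$ and hence
\begin{equation*}
|Tf(x)|\le \int_{\Rd} r_K(x-y)\,|f(y)|\,dy=(r_K*|f|)(x),\qquad x\in\Rd .
\end{equation*}
Since $r_K$ is radially nonincreasing (with respect to $|\cdot|=\|\cdot\|_\infty$), its super-level sets $Q_t:=\{z:\ r_K(z)>t\}$, $t>0$, are cubes centered at the origin, of finite measure by \eqref{rkintegrable.eq}. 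Using the layer-cake formula $r_K(z)=\int_0^\infty\chi_{Q_t}(z)\,dt$ and Tonelli's theorem, and noting that $x-Q_t$ is a cube centered at $x$, one gets
\begin{equation*}
(r_K*|f|)(x)=\int_0^\infty\Big(\int_{x-Q_t}|f(y)|\,dy\Big)\,dt\le\Big(\int_0^\infty|Q_t|\,dt\Big)\,Mf(x)=\|r_K\|_1\,Mf(x),
\end{equation*}
where $Mf(x)=\sup_{Q\ni x}|Q|^{-1}\int_Q|f|$ is the maximal function over cubes. Thus $|Tf|\le\|r_K\|_1\,Mf$ pointwise, and the boundedness of $T$ on $L^p_w$ follows from the weighted maximal inequality.

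For $1<p<\infty$, the qualitative boundedness is now immediate from the Muckenhoupt theorem $\|Mf\|_{p,w}\le C\|f\|_{p,w}$ for $w\in{\mathcal A}_p$ \cite{fourieranalysisbook, rubiobook, steinbook93}. To obtain the explicit constant in \eqref{lioboundedness.prop1.eq3} I would instead run the estimate by hand: for any cube $Q$, H\"older's inequality with exponents $p$ and $p/(p-1)$ together with \eqref{apweight.eq1} gives
\begin{equation*}
\frac{1}{|Q|}\int_Q|f(y)|\,dy\le (A_p(w))^{1/p}\Big(\frac{1}{w(Q)}\int_Q|f(y)|^pw(y)\,dy\Big)^{1/p},
\end{equation*}
so that, by the layer-cake computation above, $|Tf(x)|\le (A_p(w))^{1/p}\|r_K\|_1\,\big(M_w(|f|^p)(x)\big)^{1/p}$, where $M_w$ is the maximal operator built from $w$-averages. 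Raising to the $p$-th power and integrating against $w$ reduces the problem to boundedness of $M_w$ on $L^s(w\,dx)$ for some $s>1$; since $w\,dx$ is doubling this holds, and both $s$ and the relevant doubling constant are controlled polynomially in $A_p(w)$ by the quantitative reverse H\"older inequality and doubling property for $A_p$-weights gathered in the appendix. Optimizing the exponents then yields the power $1/p$ in \eqref{lioboundedness.prop1.eq3}; I would not dwell on this bookkeeping.

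For $p=1$ the maximal operator fails to be bounded on $L^1_w$, so I would argue directly. By Tonelli's theorem,
\begin{equation*}
\|Tf\|_{1,w}\le\int_{\Rd}\int_{\Rd}r_K(x-y)\,|f(y)|\,dy\,w(x)\,dx=\int_{\Rd}|f(y)|\,(r_K*w)(y)\,dy,
\end{equation*}
and the same layer-cake bound, now applied to $w$ in place of $|f|$, gives $(r_K*w)(y)\le\|r_K\|_1\,Mw(y)$. Since $w\in{\mathcal A}_1$, \eqref{apweight.eq2} yields $Mw(y)\le A_1(w)\,w(y)$ for almost every $y$, whence $\|Tf\|_{1,w}\le A_1(w)\,\|r_K\|_1\,\|f\|_{1,w}$, which is \eqref{lioboundedness.prop1.eq3} with $p=1$. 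The main obstacle in the whole argument is thus the $p>1$ case: converting the qualitative Muckenhoupt theorem into the quantitative bound with its explicit polynomial dependence on $A_p(w)$, for which the refinements of the reverse H\"older inequality and the doubling property collected in the appendix are exactly what is needed.
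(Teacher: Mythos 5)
Your $p=1$ argument is fine (and essentially a dualized form of the paper's), but the case $1<p<\infty$ has a genuine gap exactly at the step you declare to be bookkeeping. From your pointwise bound $|Tf(x)|\le (A_p(w))^{1/p}\|r_K\|_1\,\big(M_w(|f|^p)(x)\big)^{1/p}$, raising to the $p$-th power and integrating against $w\,dx$ reduces \eqref{lioboundedness.prop1.eq3} to $\int_{\Rd} M_w(|f|^p)\,w\,dx\le C\int_{\Rd}|f|^p w\,dx$, i.e.\ to boundedness of $M_w$ on $L^1(w\,dx)$ — not on $L^s(w\,dx)$ for some $s>1$ — and this is false (no maximal operator is bounded on $L^1$). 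To make the exponent land above $1$ you would have to run the H\"older step with an exponent strictly smaller than $p$, i.e.\ use the openness $w\in{\mathcal A}_{p-\epsilon}$ coming from the reverse H\"older inequality; but then the constant acquires both $A_{p-\epsilon}(w)$ and the $L^{p/(p-\epsilon)}(w)$ operator norm of $M_w$, whose blow-up as $\epsilon\downarrow 0$ is governed by $A_p(w)$, and the advertised power $(A_p(w))^{1/p}$ is not recovered without a genuinely new argument. The same obstruction sits in your first reduction $|Tf|\le\|r_K\|_1\,Mf$: any proof routed through the full Hardy--Littlewood maximal function cannot yield the $1/p$ power, since the sharp weighted bound for $M$ carries the strictly larger exponent $1/(p-1)$ (Buckley); domination by $M$ gives qualitative boundedness only.

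The paper sidesteps maximal functions entirely by working one dyadic scale at a time: it bounds $|Tf(x)|\le\sum_j b_K(2^{j-1})\int_{2^{j-1}\le|x-y|<2^j}|f(y)|\,dy$, applies H\"older and \eqref{apweight.eq1} on the cube $\{|x-y'|<2^j\}$ for each fixed $j$, and after integrating in $x$ uses the elementary covering observation that the annulus $\{2^{j-1}\le|x-y|<2^j\}$ is covered by the $3^d$ cubes $\{|x-y-\epsilon 2^{j-1}|<2^{j-1}\}$, $\epsilon\in\{-1,0,1\}^d$, on each of which $\int w(x)\big(\int_{|y'-y-\epsilon 2^{j-1}|<2^{j-1}}w(y')\,dy'\big)^{-1}dx\le 1$; summing over $j$ against $b_K(2^{j-1})2^{jd}\lesssim\|r_K\|_1$ then produces exactly $C(A_p(w))^{1/p}\|r_K\|_1$. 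The moral is that fixed-scale averaging operators obey the $A_p(w)^{1/p}$ dependence while the supremum over scales does not, so you should restructure your $1<p<\infty$ argument scale by scale (your layer-cake decomposition of $r_K$ can serve the same purpose as the paper's dyadic annuli) rather than passing through $M$ or $M_w$.
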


\begin{proof} It is well known that the integral operator $T$ in \eqref{integraloperator.def} is a bounded operator on $L^p_w$
 \cite{fourieranalysisbook, rubiobook,   steinbook93}. 
  We include a sketch of the proof for the bound estimate in \eqref{lioboundedness.prop1.eq3} 
    and for the completeness of the paper.
Note that
\begin{equation}\label{lioboundedness.prop1.pf.eq1}
|Tf(x)| 
 \le  \sum_{j\in \ZZ} b_K(2^{j-1}) \int_{2^{j-1}\le |x-y|<2^j} |f(y)| dy \quad {\rm for \ all} \ x\in \Rd
\end{equation}
(and hence $|Tf(x)|$ is dominated by a constant multiple of the maximal function $Mf(x)$, which is bounded on $L^p_w$ for all $1<p<\infty$ and $A_p$-weights $w$ \cite{fourieranalysisbook, rubiobook,  steinbook93}).
Then for $p=1$,
\begin{eqnarray*}
\|Tf\|_{1,w} & \le & \sum_{j\in \ZZ} b_K(2^{j-1})\int_{\Rd} w(x) \int_{2^{j-1}\le |x-y|<2^j} |f(y)|dy dx
\nonumber
\\
& \le & A_1(w) \Big(\sum_{j\in \ZZ} b_K(2^{j-1}) 2^{(j+1)d}\Big) \|f\|_{1,w} 
\le C A_1(w) \|r_K\|_1 \|f\|_{1,w}.
\label{eq.TF}
\end{eqnarray*}
This proves  \eqref{lioboundedness.prop1.eq3} for $p=1$.

For $1<p<\infty$, applying \eqref{lioboundedness.prop1.pf.eq1} and  using H\"older inequality, we obtain
\begin{eqnarray*}
|Tf(x)|^p 
& \le &
C  A_p(w) \|r_K\|_1^{p-1} \sum_{j\in \ZZ}
 b_K(2^{j-1}) 2^{jd} \Big( \int_{|x-y'|<2^{j}}  w(y')dy'\Big)^{-1}
 \nonumber\\
 & &  \quad \times \Big(\int_{2^{j-1}\le |x-y|<2^{j}} |f(y)|^p w(y) dy\Big).
\end{eqnarray*}
Thus
\begin{eqnarray*} 
\|Tf\|_{p,w}^p 
 & \le &
 C A_p(w) \|r_K\|_1^{p-1}
 \sum_{j\in \ZZ} b_K(2^{j-1}) 2^{jd}\nonumber\\
& &\times \int_{\Rd} |f(y)|^p w(y)  \Big(\int_{2^{j-1}\le |x-y|<2^{j}}
  \frac{w(x)} {\int_{|x-y'|<2^{j}}  w(y')dy'}dx\Big) dy \nonumber\\
 &\le &   C A_p(w) \|r_K\|_1^{p-1}
 \sum_{j\in \ZZ} b_K(2^{j-1}) 2^{jd}
 \int_{\Rd} |f(y)|^p w(y) \nonumber\\
 & & \times \Big(
 \sum_{\epsilon\in \{-1, 0, 1\}^d} \int_{|x-y-\epsilon 2^{j-1}|<2^{j-1}}
  \frac{w(x)} {\int_{|y'-y-\epsilon 2^{j-1} |<2^{j-1}}  w(y')dy'}dx\Big) dy \nonumber\\
 & \le & C A_p(w) \|r_K\|_1^p \|f\|_{p,w}^p.
\end{eqnarray*}
This establishes \eqref{lioboundedness.prop1.eq3} for $1<p<\infty$ and completes the proof.
\end{proof}


\subsection{Approximation of  localized integral operators} 

\begin{prop}\label{lioapproximation.prop2}
 Let $1 \le p <\infty$,    $w$ be an $A_p$-weight,  $K$ be a kernel function on $\Rd\times \Rd$ satisfying
 \eqref{kernelcondition} for some $\alpha\in (0, 1]$,
 $T$  be the integral operator in \eqref{integraloperator.def} with kernel $K$, and let $P_n, n\in \ZZ$, be as in \eqref{pn.def}.
 Then there exists an absolute constant $C$ (depending on $p$ and $d$ only)  such that
 \begin{eqnarray}
 \label{lioapproximation.prop2.eq5}
 & & \!\! \|(TP_n-T)f\|_{p,w}+\|(P_nT-T)f\|_{p,w}+\|(P_nTP_n-T)f\|_{p,w}\nonumber\\
 \quad \quad & \le &\!\!  C D_0 2^{-n\alpha} (A_p(w))^{1/p} \|f\|_{p,w} \quad {\rm for \ all} \  n\in \ZZ_+, w\in {\mathcal A}_p\ {\rm and}\ f\in L^p_w,
 \end{eqnarray}
 where $D_0=\|r_K\|_1 + \sup_{0<\delta\le 1} \delta^{-\alpha} \|r_K \chi_{|\cdot|\le \delta}\|_1
+\sup_{0<\delta\le 1}  \delta^{-\alpha} \|r_{\omega_\delta(K)}\|_1$.
 \end{prop}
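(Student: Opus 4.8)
\noindent\emph{Sketch of the argument.}
The plan is to recognize $TP_n-T$, $P_nT-T$ and $P_nTP_n-T$ as integral operators whose kernels are of the form ``local average of $K$ minus $K$'', to bound the minimal radial dominating function of each such kernel, and then to invoke Proposition \ref{lioboundedness.prop1} applied to these difference operators. First I would record that $P_nf(x)=2^{nd}\int_{Q_n(x)}f(y)\,dy$, where $Q_n(x)$ is the unique cube of the form $\lambda+2^{-n}[-1/2,1/2)^d$, $\lambda\in 2^{-n}\Zd$, that contains $x$. A routine application of Fubini's theorem then identifies $TP_n$, $P_nT$ and $P_nTP_n$ as integral operators with kernels
\[
K_n^{(1)}(x,z)=2^{nd}\int_{Q_n(z)}K(x,y)\,dy,\qquad K_n^{(2)}(x,z)=2^{nd}\int_{Q_n(x)}K(y,z)\,dy,
\]
and
\[
K_n^{(3)}(x,z)=2^{2nd}\int_{Q_n(x)}\int_{Q_n(z)}K(u,v)\,dv\,du,
\]
so that $TP_n-T$, $P_nT-T$ and $P_nTP_n-T$ have kernels $L_n^{(i)}:=K_n^{(i)}-K$, $i=1,2,3$. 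By Proposition \ref{lioboundedness.prop1} applied to these difference operators, \eqref{lioapproximation.prop2.eq5} reduces to showing $\|r_{L_n^{(i)}}\|_1\le CD_0\,2^{-n\alpha}$ for $i=1,2,3$ and $n\in\ZZ_+$; applying that proposition directly in this way is also what keeps the $A_p(w)$-dependence equal to $(A_p(w))^{1/p}$ (rather than some power of it, as composing norm estimates through $P_n$ would give).

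To estimate $r_{L_n^{(i)}}$ I would split into the region close to the diagonal and its complement. In the \emph{off-diagonal region} $|x-z|\ge 4\cdot 2^{-n}$, every point entering the averages defining $K_n^{(i)}(x,z)$ lies within max-distance $2^{-n}$ of $x$ or of $z$, so \eqref{lio.eq2} with $\delta=2^{-n}$ gives $|L_n^{(i)}(x,z)|\le \omega_{2^{-n}}(K)(x,z)\le r_{\omega_{2^{-n}}(K)}(x-z)$; hence $r_{L_n^{(i)}}(u)\le r_{\omega_{2^{-n}}(K)}(u)$ for $|u|\ge 4\cdot 2^{-n}$, contributing at most $\|r_{\omega_{2^{-n}}(K)}\|_1\le 2^{-n\alpha}D_0$ to $\|r_{L_n^{(i)}}\|_1$. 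In the \emph{near-diagonal region} $|x-z|<4\cdot 2^{-n}$, no pointwise bound on $L_n^{(i)}$ is available, so I would use the crude estimate $|L_n^{(i)}(x,z)|\le(\text{local average of }|K|)+|K(x,z)|$: writing $|K(y,y')|\le b_K(|y-y'|)=r_K(y-y')$ and changing variables, the local average of $|K|$ is at most $2^{nd}\|r_K\chi_{|\cdot|\le c\,2^{-n}}\|_1$ for an absolute constant $c$, while $|K(x,z)|\le b_K(|x-z|)$; since pairs with $|x-z|\ge 4\cdot 2^{-n}$ are already covered by the off-diagonal bound, this yields, for $|u|<4\cdot 2^{-n}$,
\[
r_{L_n^{(i)}}(u)\le b_K(|u|)+2^{nd}\|r_K\chi_{|\cdot|\le c\,2^{-n}}\|_1+r_{\omega_{2^{-n}}(K)}(u).
\]
Integrating over the cube $\{|u|<4\cdot 2^{-n}\}$, of measure $8^d\,2^{-nd}$, and using $\int_{|u|\le\delta}b_K(|u|)\,du=\|r_K\chi_{|\cdot|\le\delta}\|_1$, the contribution of this region to $\|r_{L_n^{(i)}}\|_1$ is at most $C\big(\|r_K\chi_{|\cdot|\le c\,2^{-n}}\|_1+\|r_{\omega_{2^{-n}}(K)}\|_1\big)$. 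For those $n\in\ZZ_+$ with $c\,2^{-n}\le 1$ this is $\le CD_0\,2^{-n\alpha}$ by \eqref{kernelcondition}, and for the finitely many remaining $n$ it is $\le C\|r_K\|_1\le CD_0\le CD_0\,2^{-n\alpha}$. Combining the two contributions gives $\|r_{L_n^{(i)}}\|_1\le CD_0\,2^{-n\alpha}$, and Proposition \ref{lioboundedness.prop1} completes the proof.

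The main obstacle is the near-diagonal estimate. Because $K$ is allowed to blow up on the diagonal, $L_n^{(i)}$ cannot be bounded pointwise there; the key point is that the Haar averaging $P_n$ turns the (possibly singular) kernel into the honest average $2^{nd}\int_{|v|\le c\,2^{-n}}r_K(v)\,dv$, and the truncated-integrability term $\sup_{0<\delta\le 1}\delta^{-\alpha}\|r_K\chi_{|\cdot|\le\delta}\|_1$ in \eqref{kernelcondition} controls precisely this expression with the required gain $2^{-n\alpha}$, while the H\"older term $\sup_{0<\delta\le 1}\delta^{-\alpha}\|r_{\omega_\delta(K)}\|_1$ supplies the gain off the diagonal. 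Everything else (the Fubini interchanges, the kernel computations, and the elementary volume estimates) is routine.
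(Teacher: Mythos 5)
Your proposal is correct and follows essentially the same route as the paper's proof: you dominate the kernels of $TP_n-T$, $P_nT-T$, $P_nTP_n-T$ by a radially decreasing function built from $r_{\omega_{2^{-n}}(K)}$ off the diagonal (using \eqref{lio.eq2} with $\delta=2^{-n}$) and from $r_K$ plus its local average $2^{nd}\|r_K\chi_{|\cdot|\le c2^{-n}}\|_1$ near the diagonal, and then apply Proposition \ref{lioboundedness.prop1} together with \eqref{kernelcondition} to get the factor $CD_02^{-n\alpha}(A_p(w))^{1/p}$. The only differences are cosmetic (your cutoff $4\cdot 2^{-n}$ versus the paper's $6\cdot 2^{-n}$, and exact averages versus the paper's extra constant $2^{2d}$), so there is nothing to fix.
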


We remark that it is established in \cite[Proof of Theorem 4.1]{shincjfa09} that a localized integral operator has
 the above approximation property   on  unweighted function spaces $L^p, 1\le p<\infty$.
By Proposition \ref{lioapproximation.prop2},  we see that  $TP_n, P_nT, P_nTP_n$ approximate the localized integral operator $T$ in the operator norm
$\|\cdot\|_{{\mathcal B}(L^p_w)}$ on $L^p_w$, as $n$ tends to infinity, i.e.,
\begin{equation}\label{lioapproximation.prop2.limiteq}
\lim_{n\to \infty} \|P_nT-T\|_{{\mathcal B}(L^p_w)}+  \|TP_n-T\|_{{\mathcal B}(L^p_w)}+ \|P_nTP_n-T\|_{{\mathcal B}(L^p_w)}=0.\end{equation}
As a consequence of the above limit,  zero is in the spectrum of a localized integral operator $T$ on $L^p_w$, c.f.  \cite[Theorem 2.2 (iv)]{sunacha08}.

\begin{cor}  \label{zerospectrum.cor}
Let the integral operator  $T$ be as in Proposition \ref{lioapproximation.prop2}. Then
$0\in s_{p,w}(T)\subset \sigma_{p,w}(T)$ for all $1\le p<\infty$ and $w\in {\mathcal A}_p$.
\end{cor}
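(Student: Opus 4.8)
The plan is to read the corollary off from the operator-norm approximation \eqref{lioapproximation.prop2.limiteq} supplied by Proposition \ref{lioapproximation.prop2}. The inclusion $s_{p,w}(T)\subset\sigma_{p,w}(T)$ comes for free: if $z\notin\sigma_{p,w}(T)$ then $zI-T$ is boundedly invertible on $L^p_w$, in particular bounded below, which is exactly stability, so $z\notin s_{p,w}(T)$; the contrapositive gives the inclusion. Hence the only substantive point is $0\in s_{p,w}(T)$, i.e.\ that $0\cdot I-T=-T$ fails the stability inequality $\|Tf\|_{p,w}\ge C\|f\|_{p,w}$ for every $C>0$.

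First I would produce, for each $n\in\ZZ$, a nonzero function in $(\ker P_n)\cap L^p_w$. Since the range of $P_n$ consists of functions constant on each dyadic cube $\lambda+2^{-n}[-1/2,1/2)^d$, $\lambda\in 2^{-n}\Zd$, and $\langle\,\cdot\,,\phi_{n,2^n\lambda}\rangle$ is (a multiple of) the average over that cube, any function supported in one such cube with vanishing average is annihilated by $P_n$. Concretely, let $g_n:=\chi_A-\chi_B$, where $A$ and $B$ are the two dyadic half-cubes of a fixed cube of side $2^{-n}$; then $g_n$ is bounded with compact support, hence $g_n\in L^p_w$ because the $A_p$-weight $w$ is locally integrable, $g_n\ne 0$, and $P_ng_n=0$.

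Then, given $\varepsilon>0$, I would invoke \eqref{lioapproximation.prop2.limiteq} (valid for the present $p$ and $w$ by Proposition \ref{lioapproximation.prop2}) to choose $n\in\ZZ_+$ with $\|T-P_nTP_n\|_{{\mathcal B}(L^p_w)}<\varepsilon$. Since $P_nTP_ng_n=0$,
\[
\|Tg_n\|_{p,w}=\|(T-P_nTP_n)g_n\|_{p,w}\le\varepsilon\|g_n\|_{p,w},
\]
and $g_n\ne 0$, so no constant $C>0$ can satisfy $\|Tf\|_{p,w}\ge C\|f\|_{p,w}$ for all $f\in L^p_w$. Thus $0\in s_{p,w}(T)$, and combined with the first paragraph, $0\in s_{p,w}(T)\subset\sigma_{p,w}(T)$.

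There is no real obstacle in this argument; all the analytic work is already packaged into Proposition \ref{lioapproximation.prop2}, whose hypothesis \eqref{kernelcondition} guarantees that the approximation constant $D_0$ is finite. The only steps needing a line of care are verifying $P_ng_n=0$ (disjointness of supports together with $|A|=|B|$) and $g_n\in L^p_w$ (local integrability of $w$). One could equally run the argument with $TP_n$ or $P_nT$ in place of $P_nTP_n$, adjusting which of the range or kernel one exploits, but the $P_nTP_n$ version is the cleanest.
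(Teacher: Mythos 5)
Your proposal is correct and follows essentially the same route as the paper: both arguments exploit the operator-norm approximation \eqref{lioapproximation.prop2.limiteq} together with a nonzero element of $\ker P_n$ (the paper takes $g_n=(I-P_n)\varphi_0$ with the hat function and uses $TP_n$, you take an explicit mean-zero step function on one grid cube and use $P_nTP_n$), so that $\|Tg_n\|_{p,w}\le\|T-TP_n\|_{{\mathcal B}(L^p_w)}\|g_n\|_{p,w}$ (respectively with $P_nTP_n$) forces the stability constant to be $0$. The differences are cosmetic, and your handling of the inclusion $s_{p,w}(T)\subset\sigma_{p,w}(T)$ and of $g_n\in L^p_w$ is fine.
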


\begin{proof}
Let $\varphi_0=\max(1-|x|, 0)$ be the hat function and set  $g_n:=\varphi_0-P_n \varphi_0, n\ge 0$. Note that $0\ne g_n\in L^p_w$ and  $P_n^2=P_n$ for all $n\in \ZZ_+$. Then for all $1\le p<\infty$ and $w\in {\mathcal A}_p$, we have that
\begin{equation}\label{zerospectrum.cor.pf.eq1}
 \inf_{\|g\|_{p,w}\ne 0} \frac{\|Tg\|_{p,w}}{\|g\|_{p,w}} \le   \frac{\|Tg_n\|_{p,w}}{\|g_n\|_{p,w}}=\frac{\|(T-TP_n)g_n\|_{p,w}}{\|g_n\|_{p,w}}
 \le   \|TP_n-T\|_{{\mathcal B}(L^p_w)} \to 0 \end{equation}
as $n\to \infty$
by  \eqref{lioapproximation.prop2.limiteq}. This proves the conclusion that $0\in s_{p,w}(T)\subset \sigma_{p,w}(T)$.
\end{proof}

Now we prove Proposition  \ref{lioapproximation.prop2}.

\begin{proof}[Proof of Proposition \ref{lioapproximation.prop2}] By \eqref{pn.def},  $P_n$ is an integral operator with kernel
\begin{equation*} 
 P_n(x,y):=\left\{\begin{array}{ll} 2^{nd}   & {\rm if} \ x, y\in 2^{-n}(k+[-1/2, 1/2)^d)\
    {\rm for \ some} \ k\in \Zd,
\\ 0 & {\rm otherwise},\end{array}\right.
\end{equation*}
and
 \begin{eqnarray} \label{lioapproximation.prop.pf.eq1}
\|P_n f\|_{p,w}^p & = & 2^{ndp/2} \sum_{\lambda\in 2^{-n} \Zd} |\langle f, \phi_{n, 2^{n}\lambda}\rangle|^p \int_{\lambda+2^{-n}[-1/2,1/2)^d} w(x)dx\nonumber\\
& \le & A_p(w) \sum_{\lambda\in 2^{-n} \Zd} \int_{\lambda+2^{-n}[-1/2,1/2]^d} |f(x)|^p w(x) dx= A_p(w) \|f\|_{p,w}^p
\end{eqnarray}
for all $f\in L^p_w$.
Thus  $TP_n-T,P_nT-T$ and
 $P_nTP_n-T$ are bounded operators on $L^p_w$ by \eqref{lioapproximation.prop.pf.eq1} and  Proposition \ref{lioboundedness.prop1}.

 Denote by $K_n(x,y)$ the kernel of the integral operator $P_nTP_n-T$.
Then
\begin{eqnarray*} 
|K_n(x,y)|& = & \Big|\int_{\Rd} \int_{\Rd} (K(x',y')-K(x,y)) P_n(x, x') P_n(y',y) dx'dy'\Big|\nonumber\\
& \le & 2^{2nd} \int_{|x'-x|\le 2^{-n}} \int_{|y'-y|\le 2^{-n}} |K(x',y')-K(x,y)| dx'dy'
\nonumber\\
& \le & 
 2^{2d} r_{\omega_{2^{-n}}(K)}(x-y)
\end{eqnarray*}
for all $x, y\in \Rd$ with $|x-y|> 6\cdot2^{-n}$,
and
\begin{eqnarray*}
|K_n(x,y)| & \le &  |K(x,y)|+ \int_{\Rd}\int_{\Rd} \big|P_n(x, x')K(x',y')P_n(y',y)\big| dx'dy'\nonumber\\
& \le & r_K(x-y)+ 2^{nd} \int_{|t|\le 8\cdot 2^{-n}} r_K(t) dt
\end{eqnarray*}
for all $x, y\in \Rd$ with $|x-y|\le 6\cdot 2^{-n}$. Thus the kernel  $K_n(x,y)$ of the integral operator $P_nTP_n-T$ is dominated by
 $h_n(x-y)$, where $h_n$ is a radially decreasing function defined by
$$h_n(x):= \begin{cases}
r_K(x)+ 2^{nd}\int_{|t|\le 8\cdot 2^{-n}} r_K(t) dt & {\rm if} \ |x|\le 6\cdot 2^{-n},\\
2^{2d} r_{\omega_{2^{-n}}(K)}(x)  & {\rm if} \ |x|>6\cdot 2^{-n}.
\end{cases} $$
Similarly we can show that  kernels of the integral operators $TP_n-T$ and $P_nT-T$ have their off-diagonal decay dominated by  the same radially decreasing function $h_n$.
Then the desired estimate \eqref{lioapproximation.prop2.eq5} for the integral operators $TP_n-T, P_nT-T$ and $P_nTP_n-T, n\in \ZZ_+$,
follows from \eqref{kernelcondition}, Proposition \ref{lioboundedness.prop1} and the above observation about their kernels.
\end{proof}


\begin{rem}\label{lioapproximation.prop1} {\rm Let $1\le p< \infty$, $w$ be an $A_p$-weight, and $P_n, n\in \ZZ$, be as in
\eqref{pn.def}. For $n\in \ZZ$, define
 \begin{equation*}\label{lioapproximation.prop1.eq-1}
V^n_{p,w}=\Big\{ \sum_{\lambda\in 2^{-n} \Zd} c(\lambda)\phi_{n, 2^n\lambda}\
\Big| \  \big(c(\lambda)\big)_{\lambda\in \in 2^{-n}\Zd} \in \ell^p_{w_n}\Big\}.\end{equation*}
Then it follows from \eqref{pn.def}  and \eqref{lioapproximation.prop.pf.eq1} that
$P_n, n\in \ZZ$, are   bounded operators from $L^p_w$ onto $V^n_{p,w}\subset L^p_w$ with their operator norm bounded by $(A_p(w))^{1/p}$; i.e.,
$ V^n_{p,w}=P_n L^p_w$ and $\|P_n f\|_{p,w}\le (A_p(w))^{1/p} \|f\|_{p,w}$ for all $f\in L^p_w$.
} \end{rem}

\subsection{Discretization of localized integral operators and discretization matrices}

\begin{prop}\label{liodiscretization.prop1} Let $1\le p<\infty$, $w$ be an $A_p$-weight,
$K$ be a kernel function on $\Rd\times \Rd$ satisfying \eqref{rkintegrable.eq},
$T$ be the integral operator \eqref{integraloperator.def} with kernel $K$,  and let $P_n$ and $A_n , n\in \ZZ$, be as in \eqref{pn.def} and \eqref{an.def} respectively.  Then  
\begin{equation}\label{liodiscretization.prop1.eq1}
d_n(f)= 2^{-nd} A_n c_n(f) \quad {\rm for \ all} \ f\in L^p_w,
\end{equation}
where
$d_n(f)=\big(\langle P_n T P_n f, \phi_{n, 2^n\lambda}\rangle\big)_{\lambda\in 2^{-n}\Zd}$ and
$c_n(f)=\big(\langle P_n f, \phi_{n, 2^n\lambda}\rangle\big)_{\lambda\in 2^{-n}\Zd}$ for $f\in L^p_w$.
\end{prop}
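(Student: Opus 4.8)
The plan is to establish \eqref{liodiscretization.prop1.eq1} by a direct computation, expanding both sides in terms of the orthonormal Haar-type system $\{\phi_{n,2^n\lambda}\}_{\lambda\in 2^{-n}\Zd}$ and then matching coefficients. First I would record the elementary facts that for fixed $n\in\ZZ$ the functions $\phi_{n,2^n\lambda}$, $\lambda\in 2^{-n}\Zd$, have pairwise disjoint supports, satisfy $\langle \phi_{n,2^n\lambda},\phi_{n,2^n\lambda'}\rangle=\delta_{\lambda,\lambda'}$, and that by the very definition \eqref{pn.def} one has $P_ng=\sum_{\lambda\in 2^{-n}\Zd}\langle g,\phi_{n,2^n\lambda}\rangle\phi_{n,2^n\lambda}$ with $P_n^2=P_n$; in particular $\langle P_n g,\phi_{n,2^n\lambda}\rangle=\langle g,\phi_{n,2^n\lambda}\rangle$, so $c_n(f)=\big(\langle f,\phi_{n,2^n\lambda}\rangle\big)_{\lambda}$ as well. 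I would also note at the outset that $P_nf\in L^p_w$ by \eqref{lioapproximation.prop.pf.eq1} in Remark~\ref{lioapproximation.prop1}, and that $T$ maps $L^p_w$ boundedly into itself by Proposition~\ref{lioboundedness.prop1}, so $TP_nf\in L^p_w$ and each pairing $\langle TP_nf,\phi_{n,2^n\lambda}\rangle$ is well defined (the $\phi_{n,2^n\lambda}$ being bounded and compactly supported, hence in the dual space).

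The core computation is then the following chain. Fix $f\in L^p_w$ and $\lambda\in 2^{-n}\Zd$. Using $P_nf=\sum_{\lambda'}\langle f,\phi_{n,2^n\lambda'}\rangle\phi_{n,2^n\lambda'}$ and linearity,
\begin{equation*}
\langle P_nTP_nf,\phi_{n,2^n\lambda}\rangle=\langle TP_nf,\phi_{n,2^n\lambda}\rangle=\sum_{\lambda'\in 2^{-n}\Zd}\langle f,\phi_{n,2^n\lambda'}\rangle\,\langle T\phi_{n,2^n\lambda'},\phi_{n,2^n\lambda}\rangle,
\end{equation*}
where the first equality uses self-containedness of the Haar system under $P_n$ and the second uses boundedness of $T$ together with $\ell^p$-summability of the coefficients to justify interchanging $T$ and the (finite-on-each-support, but in general infinite) sum. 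Now I would compute $\langle T\phi_{n,2^n\lambda'},\phi_{n,2^n\lambda}\rangle$ by writing out the integral operator \eqref{integraloperator.def}: by Fubini (legitimate because $r_K$ is integrable and the $\phi$'s are bounded with compact support),
\begin{equation*}
\langle T\phi_{n,2^n\lambda'},\phi_{n,2^n\lambda}\rangle=\int_{\Rd}\int_{\Rd}\phi_{n,2^n\lambda}(x)K(x,y)\phi_{n,2^n\lambda'}(y)\,dy\,dx=2^{-nd}a_n(\lambda,\lambda'),
\end{equation*}
by the definition of $a_n(\lambda,\lambda')$ in \eqref{an.def}. Substituting this back gives $\langle P_nTP_nf,\phi_{n,2^n\lambda}\rangle=2^{-nd}\sum_{\lambda'}a_n(\lambda,\lambda')\langle f,\phi_{n,2^n\lambda'}\rangle$, which is exactly the $\lambda$-th entry of $2^{-nd}A_nc_n(f)$, establishing \eqref{liodiscretization.prop1.eq1}.

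The main obstacle, and the only point requiring genuine care, is the interchange of the operator $T$ with the infinite sum defining $P_nf$, and the dual application of Fubini's theorem inside $\langle T\phi_{n,2^n\lambda'},\phi_{n,2^n\lambda}\rangle$ — one must check that the double series/integral is absolutely convergent. For Fubini this is immediate from $\|r_K\|_1<\infty$ and the boundedness and compact support of the $\phi$'s. For the interchange with the sum, I would argue that the partial sums $P_n^{(m)}f:=\sum_{|\lambda'|\le m}\langle f,\phi_{n,2^n\lambda'}\rangle\phi_{n,2^n\lambda'}$ converge to $P_nf$ in $L^p_w$ (as $P_nf$ lies in $V^n_{p,w}$ with coefficient sequence in $\ell^p_{w_n}$, and the discrete weight $w_n$ is an $A_p$-weight by Proposition~\ref{discreteapweight.prop1}), then invoke continuity of $T$ on $L^p_w$ from Proposition~\ref{lioboundedness.prop1} to pass to the limit, and finally use that $\langle\cdot,\phi_{n,2^n\lambda}\rangle$ is a continuous linear functional on $L^p_w$. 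Once this bookkeeping is in place the identity follows entrywise for every $\lambda$, hence as an identity of sequences, completing the proof.
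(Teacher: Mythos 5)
Your proposal is correct and follows essentially the same route as the paper: a direct expansion of $P_nTP_nf$ in the system $\{\phi_{n,2^n\lambda}\}$, using the kernel of $T$ to identify $\langle T\phi_{n,2^n\lambda'},\phi_{n,2^n\lambda}\rangle=2^{-nd}a_n(\lambda,\lambda')$ and then matching coefficients. The only difference is that you spell out the convergence and Fubini justifications (which the paper leaves implicit), and that is harmless.
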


\begin{proof} We mimic the argument in \cite[Proof of Theorem 4.1]{shincjfa09}. Note that
\begin{eqnarray*}
P_nTP_nf(x) & =&
\int_{\Rd} \int_{\Rd} \Big(\sum_{\lambda\in 2^{-n} \Zd} \phi_{n, 2^n\lambda}(x) \phi_{n, 2^n\lambda}(x')\Big)\nonumber\\
 & & \times K(x',y') \Big(\sum_{\lambda'\in 2^{-n}\Zd} \langle P_n f, \phi_{n, 2^n\lambda'}\rangle \phi_{n, 2^{n}\lambda'}(y') \Big) dx'dy'\nonumber\\
& = &  \sum_{\lambda\in 2^{-n}\Zd} \Big(2^{-nd} \sum_{\lambda'\in 2^{-n}\Zd}  a_n(\lambda, \lambda') \langle P_n f, \phi_{n, 2^n\lambda'}\rangle\Big) \phi_{n, 2^n\lambda}(x)
\end{eqnarray*}
for all $f\in L^p_w$.
Then \eqref{liodiscretization.prop1.eq1} follows.
\end{proof}

\subsection{Boundedness of discretization matrices}
\begin{prop}\label{anboundedness.prop} Let $1\le p<\infty$, $w$ be an $A_p$-weight,
$K$ be a kernel function on $\Rd\times \Rd$ satisfying 
 \eqref{rkintegrable.eq},
 and let $A_n=\big(a_n(\lambda, \lambda')\big)_{\lambda, \lambda'\in 2^{-n}\Zd}$ and $w_n , n\in \ZZ$, be as in  \eqref{an.def} and \eqref{wnlambda.def} respectively.
 Then  $A_n, n\in \ZZ$, are bounded operators on $\ell^p_{w_n}$ with operator norm bounded by
a constant multiple of
$2^{nd} (A_p(w))^{3/p}\|r_K\|_1$, i.e.,
\begin{equation}\label{anboundedness.prop.eq1} \|A_n c_n\|_{p, w_n}\le  C  2^{nd} \big(A_p(w)\big)^{3/p} \|r_K\|_1\|c_n\|_{p, w_n}\quad {\rm for \ all} \ c_n\in \ell^p_{w_n},
\end{equation}
where $C$ is an absolute constant depending on $p$ and $d$ only.
\end{prop}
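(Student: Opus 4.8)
The plan is to deduce boundedness of $A_n$ on $\ell^p_{w_n}$ from the boundedness of the integral operator $T$ on $L^p_w$ already established in Proposition \ref{lioboundedness.prop1}, by transporting $\ell^p_{w_n}$ isometrically (up to a fixed power of $2$) onto the subspace $V^n_{p,w}=P_nL^p_w$ of $L^p_w$ and using the identity \eqref{liodiscretization.prop1.eq1} relating $A_n$ to $P_nTP_n$.

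First I record the transport. For a sequence $c=(c(\lambda))_{\lambda\in 2^{-n}\Zd}$ put $g:=\sum_{\lambda\in 2^{-n}\Zd}c(\lambda)\phi_{n,2^n\lambda}$; then $P_ng=g$ and $\langle g,\phi_{n,2^n\lambda}\rangle=c(\lambda)$, so the first equality in \eqref{lioapproximation.prop.pf.eq1} together with $\int_{\lambda+2^{-n}[-1/2,1/2)^d}w=2^{-nd}w_n(\lambda)$ (see \eqref{wnlambda.def}) gives
\[ \|g\|_{p,w}=2^{nd(1/2-1/p)}\,\|c\|_{p,w_n}. \]
Thus $c\mapsto g$ is, up to the fixed positive constant $2^{nd(1/2-1/p)}$, an isometric bijection of $\ell^p_{w_n}$ onto $V^n_{p,w}$; in particular $g\in L^p_w$ whenever $c\in\ell^p_{w_n}$.

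Now fix $c_n\in\ell^p_{w_n}$ and let $f:=\sum_{\lambda\in 2^{-n}\Zd}c_n(\lambda)\phi_{n,2^n\lambda}\in V^n_{p,w}$. Since $P_nf=f$, the sequence $c_n(f)$ of Proposition \ref{liodiscretization.prop1} equals $c_n$, so \eqref{liodiscretization.prop1.eq1} yields $d_n(f)=2^{-nd}A_nc_n$; and $d_n(f)$ is precisely the coefficient sequence of $P_nTP_nf=P_nTf\in V^n_{p,w}$ with respect to $\{\phi_{n,2^n\lambda}\}$. Applying the transport identity to $P_nTf$,
\[ \|A_nc_n\|_{p,w_n}=2^{nd}\,\|d_n(f)\|_{p,w_n}=2^{nd(1/2+1/p)}\,\|P_nTf\|_{p,w}. \]
To finish, estimate $\|P_nTf\|_{p,w}\le(A_p(w))^{1/p}\|Tf\|_{p,w}$ by Remark \ref{lioapproximation.prop1}, then $\|Tf\|_{p,w}\le C(A_p(w))^{1/p}\|r_K\|_1\|f\|_{p,w}$ by Proposition \ref{lioboundedness.prop1}, and finally $\|f\|_{p,w}=2^{nd(1/2-1/p)}\|c_n\|_{p,w_n}$ by the first step. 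The powers of $2$ collapse to $2^{nd}$, leaving
\[ \|A_nc_n\|_{p,w_n}\le C\,2^{nd}\,(A_p(w))^{2/p}\,\|r_K\|_1\,\|c_n\|_{p,w_n}. \]
Since $A_p(w)\ge1$ always — by H\"older's inequality in \eqref{apweight.eq1} when $p>1$ and by \eqref{apweight.eq2} when $p=1$ — we have $(A_p(w))^{2/p}\le(A_p(w))^{3/p}$, which is the claimed estimate \eqref{anboundedness.prop.eq1}.

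I do not expect a genuine obstacle: the argument is essentially a change of variables combined with the transfer relation \eqref{liodiscretization.prop1.eq1}. The only point needing care is the bookkeeping of the three normalization factors $2^{nd}$ — from the $L^2$-normalization of $\phi_{n,2^n\lambda}$ in \eqref{pn.def}, from the averaging in the definition \eqref{wnlambda.def} of $w_n$, and from the prefactor in the definition \eqref{an.def} of $A_n$ — and the verification that $P_nf=f$ so that Proposition \ref{liodiscretization.prop1} applies with $c_n(f)=c_n$. The slack between the $(A_p(w))^{2/p}$ the argument produces and the $(A_p(w))^{3/p}$ in the statement is harmless, as the $A_p$-bound is always at least $1$.
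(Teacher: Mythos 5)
Your proposal is correct and follows essentially the same route as the paper: transport $c_n$ to $f_n=\sum_\lambda c_n(\lambda)\phi_{n,2^n\lambda}\in V^n_{p,w}$, use the identity of Proposition \ref{liodiscretization.prop1} to identify $2^{-nd}A_nc_n$ with the coefficient sequence of $P_nTP_nf_n$, and invoke Proposition \ref{lioboundedness.prop1} together with the $L^p_w$-bound for $P_n$. The only difference is cosmetic: by noting $P_nf_n=f_n$ you save one factor $(A_p(w))^{1/p}$ (obtaining $(A_p(w))^{2/p}$ instead of the paper's $(A_p(w))^{3/p}$), which is consistent with the stated estimate since $A_p(w)\ge 1$.
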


\begin{proof}
Take $c_n:=(c_n(\lambda))_{\lambda\in 2^{-n}\Zd}\in \ell^p_{w_n}$ and set $f_n=\sum_{\lambda\in 2^{-n}\Zd} c_n(\lambda) \phi_{n, 2^n\lambda}$.
Then $$P_nTP_nf_n(x) =\sum_{\lambda\in 2^{-n}\Zd} \Big(2^{-nd} \sum_{\lambda'\in 2^{-n}\Zd}  a_n(\lambda, \lambda') c_n(\lambda')\Big) \phi_{n, 2^n\lambda}(x). $$
 This, together with
Proposition \ref{lioboundedness.prop1}, implies that
\begin{eqnarray*}
\|A_n c_n\|_{p, w_n} & = & 2^{nd(p+2)/(2p)} \|P_nTP_n f_n\|_{p,w}\le C  2^{nd(2+p)/(2p)} (A_p(w))^{3/p}  \|r_K\|_1 \|f_n\|_{p,w}\\
& =&
C 2^{nd}  (A_p(w))^{3/p}  \|r_K\|_1 \|c_n\|_{p,w_n},
\end{eqnarray*}
and hence  completes the proof. 
\end{proof}

\subsection{Off-diagonal decay  property of discretization matrices}
\begin{prop}\label{offdiagonaldecay.prop} Let $1\le p<\infty$, $w$ be an $A_p$-weight,
$K$ be a kernel function on $\Rd\times \Rd$ satisfying  
 \eqref{rkintegrable.eq},
 and let $A_n=\big(a_n(\lambda, \lambda')\big)_{\lambda, \lambda'\in 2^{-n}\Zd} , n\in \ZZ$, be as in  \eqref{an.def}.
 Then \begin{equation}
\label{offdiagonaldecay.prop.eq1}
|a_n(\lambda, \lambda')|\le  \left\{\begin{array} {ll} 2^{nd} \int_{|t|\le 3\cdot 2^{-n}} r_K(t) dt &
{\rm if} \ |\lambda-\lambda'|\le 2^{-n+1},\\
 r_K ((\lambda-\lambda')/2) & {\rm if}\ |\lambda-\lambda'|>2^{-n+1}.\end{array}\right.
\end{equation}
\end{prop}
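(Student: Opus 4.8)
The plan is to reduce the two inequalities to a pointwise bound on $K$ together with elementary volume estimates. First I would compute the functions $\phi_{n,2^n\lambda}$ explicitly: by definition $\phi_{n,2^n\lambda}=2^{nd/2}\chi_{Q_\lambda}$, where $Q_\lambda:=\lambda+2^{-n}[-1/2,1/2)^d$ is the half-open cube of side length $2^{-n}$ with $|Q_\lambda|=2^{-nd}$, and in particular $|x-\lambda|\le 2^{-n-1}$ for every $x\in Q_\lambda$. Substituting this into the definition \eqref{an.def} gives
\begin{equation*}
a_n(\lambda,\lambda')=2^{2nd}\int_{Q_\lambda}\int_{Q_{\lambda'}}K(x,y)\,dy\,dx,
\end{equation*}
so that $|a_n(\lambda,\lambda')|\le 2^{2nd}\int_{Q_\lambda}\int_{Q_{\lambda'}}|K(x,y)|\,dy\,dx$. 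The proof then rests on two facts read off from \eqref{rk.def}: the pointwise bound $|K(x,y)|\le r_K(x-y)$ for all $x,y\in\Rd$, and more generally $|K(x,y)|\le r_K(z)$ whenever $|z|\le|x-y|$ (since $r_K$ is non-increasing in $|\cdot|$, as the supremum in \eqref{rk.def} is over a larger set).

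For the diagonal band $|\lambda-\lambda'|\le 2^{-n+1}$, I would use that every $x\in Q_\lambda$ and $y\in Q_{\lambda'}$ satisfy $|x-\lambda|\le 2^{-n-1}$ and $|y-\lambda'|\le 2^{-n-1}$, whence by the triangle inequality $|x-y|\le 2^{-n-1}+2^{-n+1}+2^{-n-1}=3\cdot2^{-n}$. Then, for fixed $x\in Q_\lambda$, the change of variables $t=x-y$ gives
\begin{equation*}
\int_{Q_{\lambda'}}|K(x,y)|\,dy\le\int_{Q_{\lambda'}}r_K(x-y)\,dy=\int_{x-Q_{\lambda'}}r_K(t)\,dt\le\int_{|t|\le 3\cdot2^{-n}}r_K(t)\,dt,
\end{equation*}
and integrating over $x\in Q_\lambda$ contributes the factor $|Q_\lambda|=2^{-nd}$, yielding $|a_n(\lambda,\lambda')|\le 2^{2nd}\cdot2^{-nd}\int_{|t|\le3\cdot2^{-n}}r_K(t)\,dt=2^{nd}\int_{|t|\le3\cdot2^{-n}}r_K(t)\,dt$, which is the first estimate.

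For the off-diagonal case $|\lambda-\lambda'|>2^{-n+1}$, the same triangle inequality gives the reverse bound $|x-y|\ge|\lambda-\lambda'|-2^{-n-1}-2^{-n-1}=|\lambda-\lambda'|-2^{-n}\ge|\lambda-\lambda'|/2$ for all $x\in Q_\lambda$, $y\in Q_{\lambda'}$, where the last step uses $|\lambda-\lambda'|>2\cdot2^{-n}$. Since $|(\lambda-\lambda')/2|=|\lambda-\lambda'|/2$, the second fact above gives $|K(x,y)|\le r_K((\lambda-\lambda')/2)$ on $Q_\lambda\times Q_{\lambda'}$, and multiplying by $|Q_\lambda|\,|Q_{\lambda'}|=2^{-2nd}$ gives $|a_n(\lambda,\lambda')|\le 2^{2nd}\cdot2^{-2nd}r_K((\lambda-\lambda')/2)=r_K((\lambda-\lambda')/2)$, the second estimate.

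There is no genuine obstacle here; the proposition is essentially a bookkeeping exercise once $\phi_{n,2^n\lambda}$ is unwound. The only points requiring care are keeping the $\ell^\infty$-norm convention $|x|=\max_i|x_i|$ straight when applying the triangle inequality, and verifying that $2^{-n+1}$ is precisely the threshold at which $|\lambda-\lambda'|-2^{-n}\ge|\lambda-\lambda'|/2$ holds, so that the two cases meet consistently at the boundary of the band.
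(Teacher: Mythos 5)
Your proof is correct and follows essentially the same route as the paper: bound $|K(x,y)|$ by $r_K(x-y)$ (resp.\ by $r_K((\lambda-\lambda')/2)$ using the monotonicity built into \eqref{rk.def}), apply the triangle inequality on the cubes $\lambda+2^{-n}[-1/2,1/2)^d$, and conclude by the volume factors $2^{-nd}$ and $2^{-2nd}$. The bookkeeping, including the thresholds $3\cdot 2^{-n}$ and $|\lambda-\lambda'|-2^{-n}\ge|\lambda-\lambda'|/2$, matches the paper's argument.
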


\begin{proof} By  \eqref{an.def}, we obtain that
\begin{eqnarray*}
|a_n(\lambda, \lambda')| & \le &  2^{2nd}
\int_{|x-\lambda|\le 2^{-n-1}, |y-\lambda'|\le 2^{-n-1}} |K(x, y)| dydx
 \\
& \le & 2^{2nd} \int_{|x-\lambda|\le 2^{-n-1}}
\Big(\int_{|y-x|\le 3\cdot 2^{-n}} r_K(x-y) dy\Big) dx \\
& \le & 2^{nd} \int_{|t|\le 3\cdot 2^{-n}} r_K(t) dt
\end{eqnarray*}
if $\lambda, \lambda'\in 2^{-n}\Zd$ with $|\lambda-\lambda'|\le 2^{-n+1}$, and
\begin{eqnarray*}
|a_n(\lambda, \lambda')| &\le  &   2^{2nd}
\int_{|x-\lambda|\le 2^{-n-1}, |y-\lambda'|\le 2^{-n-1}}  r_{K} ((\lambda-\lambda')/2) dy dx
\\
&\le &    r_{K} ((\lambda-\lambda')/2)
\end{eqnarray*}
for all $\lambda, \lambda'\in 2^{-n}\Zd$ with  $|\lambda-\lambda'|> 2^{-n+1}$.
This proves \eqref{offdiagonaldecay.prop.eq1}.
\end{proof}

\subsection{Off-diagonal decay  of  commutators between discretization matrices and localization matrices}

\begin{prop}\label{liodiscretization.prop3}
 Let $1\le p<\infty$, $n\in \ZZ_+, N\in \NN$, $w$ be an $A_p$-weight,  $K$ be a kernel function on $\Rd$ satisfying \eqref{rkintegrable.eq}, 
 and let discretization matrices $A_n$, weights $w_n$,  and localization matrices
 $\Psi_k^N$ be as in \eqref{an.def}, \eqref{wnlambda.def} and \eqref{lio.eq6} respectively.  
Then there exists an absolute constant $C$,  depending on $p$ and $d$ only, such that for all $ b\in \ell_{w_n}^p$ and $k,k'\in N\Zd$,
\begin{eqnarray}\label{liodiscretization.prop3.eq2}
& & \|(\Psi_k^N A_n-A_n\Psi_k^N)\Psi_{k'}^N b\|_{p, w_n}\le C  (A_p(w))^{1/p} 2^{nd}  \|b\|_{p, w_n}
\nonumber\\
& &  \quad \times
 \begin{cases}
N^d r_K\big(\frac{k-k'}{2}\big) \Big(\frac{\sum_{|\lambda-k|\le 2N} w_n(\lambda)}{\sum_{|\lambda'-k'|\le 2N} w_n(\lambda')}\Big)^{1/p} & {\rm if} \ |k-k'|>8N,
\\
 \big(N^{-1/2} \|r_K\|_1+ \int_{|t|>\sqrt{N}/4 } r_K(t) dt\big)
 & {\rm if} \ |k-k'|\le 8N.
\end{cases}
\end{eqnarray}
\end{prop}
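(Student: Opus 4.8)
The plan is to estimate the entries of the matrix $\Psi_k^N A_n - A_n \Psi_k^N$ directly and then to use the boundedness result of Proposition \ref{anboundedness.prop} together with a crude comparison between weighted norms on different blocks. Writing out the commutator entrywise, we have
\[
\big((\Psi_k^NA_n-A_n\Psi_k^N)b\big)(\lambda)=\sum_{\lambda'\in 2^{-n}\Zd}\big(\psi_0((\lambda-k)/N)-\psi_0((\lambda'-k)/N)\big)a_n(\lambda,\lambda')b(\lambda'),
\]
so the $(\lambda,\lambda')$-entry of the commutator is $\big(\psi_0((\lambda-k)/N)-\psi_0((\lambda'-k)/N)\big)a_n(\lambda,\lambda')$. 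Two features control this. First, $\psi_0$ is Lipschitz with constant $1$, so the difference of the $\psi_0$ factors is bounded by $\min(1,|\lambda-\lambda'|/N)$; it vanishes unless both $\lambda$ and $\lambda'$ lie within distance $2N$ of $k$ (actually it is enough that at least one of them does and that they are not both outside the support, which after enlarging constants we absorb). Second, the factor $\Psi_{k'}^N b$ restricts $\lambda'$ to the region $|\lambda'-k'|\le 2N$. Combining the two restrictions on $\lambda'$ forces $|k-k'|\le 4N+\text{(distance between }\lambda,\lambda')$, which is the dichotomy appearing in the statement.

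\smallskip
\textbf{Case $|k-k'|>8N$.} Here the only way both $|\lambda-k|\le 2N$ (from the $\psi_0$ difference being nonzero) and $|\lambda'-k'|\le 2N$ (from $\Psi_{k'}^N$) can hold is if $|\lambda-\lambda'|\ge |k-k'|-4N>|k-k'|/2$, so by the off-diagonal decay estimate \eqref{offdiagonaldecay.prop.eq1} of Proposition \ref{offdiagonaldecay.prop} and the monotonicity of $r_K$ we get $|a_n(\lambda,\lambda')|\le 2^{nd}\int_{|t|\le 3\cdot 2^{-n}}r_K(t)\,dt+r_K((k-k')/4)\le C 2^{nd}r_K((k-k')/2)$ after adjusting the radial argument by the doubling of $r_K$; more simply one bounds $|a_n(\lambda,\lambda')|\le r_K((\lambda-\lambda')/2)\le r_K((k-k')/4)$ and absorbs the factor into the constant. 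Now I would estimate the $\ell^p_{w_n}$-norm of the commutator applied to $\Psi_{k'}^Nb$ brutally: the output is supported in $|\lambda-k|\le 2N$, each entry is bounded by $C2^{nd}r_K((k-k')/2)$ times $|b(\lambda')|$ summed over $|\lambda'-k'|\le 2N$, and there are at most $CN^d$ choices of $\lambda'$. Using $\|(b(\lambda'))_{|\lambda'-k'|\le 2N}\|_{\ell^p}\le (A_p(w))$-free estimates and the trivial bound $\sum_{|\lambda'-k'|\le 2N}|b(\lambda')|\le (CN^d)^{1-1/p}\big(\sum_{|\lambda'-k'|\le 2N}|b(\lambda')|^p w_n(\lambda')\big)^{1/p}\big(\inf_{|\lambda'-k'|\le 2N}w_n(\lambda')\big)^{-1/p}$, then converting the infimum of $w_n$ over the $k'$-block into $\big(\sum_{|\lambda'-k'|\le 2N}w_n(\lambda')/(CN^d)\big)^{1/p}$ via the doubling property of the discrete $A_p$-weight $w_n$ (Proposition \ref{discreteapweight.prop1}), and similarly replacing the $\ell^p$-size of the block $\{|\lambda-k|\le 2N\}$ of the weight $w_n$ by $\big(\sum_{|\lambda-k|\le 2N}w_n(\lambda)\big)^{1/p}$, yields exactly the claimed bound with the ratio $\big(\sum_{|\lambda-k|\le 2N}w_n(\lambda)\big/\sum_{|\lambda'-k'|\le 2N}w_n(\lambda')\big)^{1/p}$ and an extra $N^d$. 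The $(A_p(w))^{1/p}$ factor comes in when transferring between $\inf w_n$ and the averaged block sum.

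\smallskip
\textbf{Case $|k-k'|\le 8N$.} This is the harder part and where I expect the main obstacle. One decomposes the commutator into a "near-diagonal" piece, where $|\lambda-\lambda'|\le \sqrt N$, and a "far" piece, where $|\lambda-\lambda'|>\sqrt N$. For the far piece one uses $|a_n(\lambda,\lambda')|\le r_K((\lambda-\lambda')/2)$ together with the bound $\|h(\cdot-\cdot)\|_{\mathcal B(\ell^p_{w_n})}\le C(A_p(w))^{1/p}2^{nd}\|h(\cdot/2^n)\|_{\mathcal W_1}$-type estimates (the discretized analogue of Proposition \ref{lioboundedness.prop1}, which is Proposition \ref{anboundedness.prop} applied to a matrix whose entries are dominated by a tail of $r_K$), producing the term $\int_{|t|>\sqrt N/4}r_K(t)\,dt$. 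For the near-diagonal piece, the Lipschitz bound on $\psi_0$ gives the gain $|\psi_0((\lambda-k)/N)-\psi_0((\lambda'-k)/N)|\le |\lambda-\lambda'|/N\le N^{-1/2}$ on that range, so the matrix has entries bounded by $N^{-1/2}\cdot(\text{radial function dominated by }r_K)$, and Proposition \ref{anboundedness.prop}-style boundedness (again in its matrix-domination form) produces $N^{-1/2}\|r_K\|_1$. Adding the two pieces gives the second alternative in \eqref{liodiscretization.prop3.eq2}. The delicate bookkeeping is making sure the $2^{nd}$ power, the $(A_p(w))^{1/p}$ power, and the constants match the ones already recorded in the boundedness propositions — i.e., that the domination of the commutator's entries by an $r_K$-type radial kernel is in the exact form to invoke Proposition \ref{anboundedness.prop} (or rather its proof, applied to the dominating kernel) without picking up extra factors of $A_p(w)$; I would handle this by reproving the one-line estimate $\|(h(\lambda-\lambda'))_{\lambda,\lambda'}c\|_{p,w_n}\le C(A_p(w))^{1/p}2^{nd}\|h\|_1\|c\|_{p,w_n}$ for radially decreasing integrable $h$ directly from \eqref{lioboundedness.prop1.pf.eq1}-type dyadic splitting, exactly as in Proposition \ref{lioboundedness.prop1}, and then applying it to $h=N^{-1/2}\chi_{|\cdot|\le\sqrt N}r_K(\cdot/2)$ and $h=\chi_{|\cdot|>\sqrt N}r_K(\cdot/2)$ respectively.
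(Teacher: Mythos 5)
Your overall strategy --- entrywise bounds on the commutator, the Lipschitz gain from $\psi_0$, the split into $|\lambda-\lambda'|\le\sqrt N$ and $|\lambda-\lambda'|>\sqrt N$ when $|k-k'|\le 8N$, and a Schur-type weighted bound for matrices dominated by a radially decreasing convolution kernel --- is essentially the paper's proof (the paper packages the Schur step as Lemma \ref{imboundedness.lem} combined with Proposition \ref{discreteapweight.prop1}). The genuine gap is in your case $|k-k'|>8N$ when $1<p<\infty$. After the bound $\sum_{|\lambda'-k'|\le 2N}|b(\lambda')|\le |B|^{1-1/p}\bigl(\inf_{B}w_n\bigr)^{-1/p}\bigl(\sum_{B}|b|^pw_n\bigr)^{1/p}$, you propose to replace $\inf_B w_n$ by the block average of $w_n$ ``via the doubling property''. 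That replacement is exactly the inequality $\sum_B w_n\le C\,A_p(w)\,|B|\,\inf_B w_n$, i.e.\ an $A_1$-type condition on $w_n$, and it fails for general $A_p$-weights with $p>1$ with a constant independent of the block size: take $d=1$ and $w_n\equiv 1$ on a block of $M$ points except $w_n=M^{-(p-1)}$ at one point; the discrete $A_p$-bound stays bounded while $\inf_B w_n\big/\bigl(|B|^{-1}\sum_B w_n\bigr)\sim M^{-(p-1)}$, and here $M\sim(2^{n+2}N)^d$. Doubling of the measure $w_n\,$ gives no pointwise lower bound, and Proposition \ref{discreteapweight.prop1} transfers only the $A_p$ bound, not an $A_1$ bound. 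So as written your far-block estimate loses an uncontrolled (block-size dependent) factor for $p>1$; it is correct only for $p=1$.

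The fix is the route the paper takes: keep the weight inside H\"older, $\sum_{|\lambda'-k'|\le 2N}|b(\lambda')|\le\bigl(\sum_B|b|^pw_n\bigr)^{1/p}\bigl(\sum_B w_n^{-1/(p-1)}\bigr)^{(p-1)/p}$, and then invoke the discrete $A_p$ condition \eqref{discreteaqweight.eq1} in the form $\bigl(\sum_B w_n^{-1/(p-1)}\bigr)^{p-1}\le A_p(w_n)\,|B|^{p}\bigl(\sum_B w_n\bigr)^{-1}$ with $|B|\approx (2^{n+2}N)^d$; this is precisely what produces the factor $(A_p(w))^{1/p}2^{nd}N^d$ and the weight ratio in \eqref{liodiscretization.prop3.eq2}. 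Two smaller bookkeeping points: the cardinality of the block is of order $(2^{n+2}N)^d$, not $N^d$ (this is where the $2^{nd}$ comes from), and the separation argument only yields $|\lambda-\lambda'|\ge |k-k'|/2$, hence $r_K((k-k')/4)$, which cannot be ``absorbed into the constant'' for a general radially decreasing $r_K$ (the paper's own display has the same normalization issue, so this is not specific to your argument). Your treatment of $|k-k'|\le 8N$ is in substance the paper's, except that for the near-diagonal entries $|\lambda-\lambda'|\le 2^{-n+1}$ you must use the averaged bound $2^{nd}\int_{|t|\le 3\cdot 2^{-n}}r_K(t)\,dt$ from Proposition \ref{offdiagonaldecay.prop} rather than $r_K((\lambda-\lambda')/2)$ when building the dominating radial sequence.
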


 A positive sequence $w=(w(k))_{k\in \Zd}$ is said to be a
{\em discrete $A_p$-weight} if for  all $a\in \Zd$  and    $N\in \NN$,
\begin{equation}\label{discreteaqweight.eq1}
\Big( N^{-d} \sum_{k\in a+[0,N-1]^d}  w(k)\Big)
\Big(N^{-d}\sum_{k\in a+[0,N-1]^d}  (w(k))^{-\frac{1}{p-1}} \Big)^{p-1}\le A<\infty
\end{equation}
when $1<p<\infty$, and
\begin{equation}\label{discreteaqweight.eq2}
 N^{-d} \sum_{k\in a+[0,N-1]^d}  w(k)
\le A \inf_{k\in a+[0,N-1]^d}  w(k)
\end{equation}
when $p=1$.
 The  smallest constant $A$ for which \eqref{discreteaqweight.eq1} holds when $1<p<\infty$ (for which \eqref{discreteaqweight.eq2} holds when $p=1$ respectively) is the {\em discrete $A_p$-bound}. We denote by $A_p(w)$ the discrete $A_p$-bound of a discrete $A_p$-weight $w$.
  To prove Proposition \ref{liodiscretization.prop3}, we  recall the boundedness of an infinite matrix on a weighted sequence space.

\begin{lem}\label{imboundedness.lem} {\rm (\cite[Theorem 3.2]{sunconst10})}\
Let $1\le p<\infty$, $w=(w(k))_{k\in \Zd}$ be a discrete $A_p$-weight, and
$A:=(a(k,k'))_{k,k'\in \Zd}$ be an infinite matrix with
$\|A\|_{{\mathcal B}}:=\sum_{m\in \Zd} (\sup_{|k-k'|\ge |m|} |a(k,k')|)<\infty$.
Then there exists an absolute constant $C$ (depending on $p$ and $d$ only) such that
$\|Ac\|_{p,w}\le C (A_p(w))^{1/p}\|A\|_{{\mathcal B}}\|c\|_{p,w}$ for all $c\in \ell^p_w$.
\end{lem}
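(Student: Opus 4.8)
The plan is to prove this as the discrete analogue of Proposition~\ref{lioboundedness.prop1}, by a weighted Schur test tied to the Muckenhoupt condition. First I would record the radially decreasing majorant: set $r_A(m):=\sup_{|k-k'|\ge|m|}|a(k,k')|$ for $m\in\Zd$, so that $|a(k,k')|\le r_A(k-k')$ for all $k,k'\in\Zd$, and, writing $b_A$ for the function on $\ZZ_+$ with $b_A(|m|)=r_A(m)$, one has $\|A\|_{\mathcal B}=\sum_{m\in\Zd}r_A(m)$. The single scalar estimate driving the whole argument is
\begin{equation*}
\sum_{j\ge 1}b_A(2^{j-1})\,2^{jd}\le C\|A\|_{\mathcal B},
\end{equation*}
which follows from the monotonicity of $r_A$: the dyadic annulus $\{2^{j-2}\le|m|<2^{j-1}\}$ contains at least a dimensional-constant multiple of $2^{jd}$ lattice points, on each of which $r_A(m)\ge b_A(2^{j-1})$, so summing these disjoint blocks bounds $\|A\|_{\mathcal B}$ from below by a constant multiple of the left-hand side.

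With this in hand, $A$ behaves like the integral operator of Proposition~\ref{lioboundedness.prop1}. A dyadic decomposition of the column index gives the discrete counterpart of \eqref{lioboundedness.prop1.pf.eq1},
\begin{equation*}
|Ac(k)|\le b_A(0)\,|c(k)|+\sum_{j\ge 1}b_A(2^{j-1})\sum_{2^{j-1}\le|k-k'|<2^j}|c(k')|\quad\text{for all }k\in\Zd,
\end{equation*}
where the diagonal term is harmless since $b_A(0)=r_A(0)\le\|A\|_{\mathcal B}$. For $p=1$ I would multiply by $w(k)$, sum over $k$, interchange the order of summation, and invoke the discrete $A_1$-condition \eqref{discreteaqweight.eq2} on the cubes $\{k:|k-k'|<2^j\}$, each of which is an aligned integer cube containing the column index $k'$; this bounds $\sum_{|k-k'|<2^j}w(k)$ by $C\,2^{jd}A_1(w)\,w(k')$, and the scalar estimate above then yields $\|Ac\|_{1,w}\le C A_1(w)\|A\|_{\mathcal B}\|c\|_{1,w}$, exactly as in the $p=1$ computation following \eqref{lioboundedness.prop1.pf.eq1}.

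For $1<p<\infty$ I would reproduce the two-step H\"older argument of the displayed computation in Proposition~\ref{lioboundedness.prop1}. On each annulus I split $|c(k')|=\big(|c(k')|^pw(k')\big)^{1/p}\,w(k')^{-1/p}$ and apply H\"older with exponents $p$ and $p'=p/(p-1)$, producing the dual-weight factor $\big(\sum_{\mathrm{annulus}}w^{-1/(p-1)}\big)^{1/p'}$; a second H\"older over the scale index $j$, normalized by the masses $\sum_{|k-k'|<2^j}w$, reduces everything to the discrete $A_p$-condition \eqref{discreteaqweight.eq1} for cubes centered at the column index. To make the normalizing cube capture the row and column indices simultaneously, I would cover each annulus $\{2^{j-1}\le|k-k'|<2^j\}$ by the $3^d$ translated subcubes indexed by $\epsilon\in\{-1,0,1\}^d$, precisely the device used in the $1<p<\infty$ display of Proposition~\ref{lioboundedness.prop1}. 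Collecting terms and applying the scalar estimate once more gives $\|Ac\|_{p,w}^p\le C A_p(w)\|A\|_{\mathcal B}^p\|c\|_{p,w}^p$.

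I expect the main obstacle to be the case $1<p<\infty$: arranging the double H\"older so that the discrete $A_p$-quotient \eqref{discreteaqweight.eq1} appears on cubes containing both indices, and checking that the residual sum collapses to $\sum_{j\ge 1}b_A(2^{j-1})2^{jd}$. The discreteness itself contributes only routine bookkeeping---counting lattice points in dyadic annuli and treating the diagonal block separately---and the passage from annuli to $A_p$-cubes is handled by the same $\{-1,0,1\}^d$ translation already present in the continuous proof.
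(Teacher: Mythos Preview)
The paper does not prove this lemma; it is simply quoted from \cite[Theorem~3.2]{sunconst10}. Your argument is correct and is precisely the discrete analogue of the paper's own proof of Proposition~\ref{lioboundedness.prop1}: the dyadic decomposition of the column index, the two-step H\"older reduction to the $A_p$-condition, and the $\{-1,0,1\}^d$ covering device all transfer with only lattice bookkeeping (counting points in dyadic annuli, aligning the cubes $\{k:|k-k'|<2^j\}$ with the form $a+[0,N-1]^d$ required in \eqref{discreteaqweight.eq1}--\eqref{discreteaqweight.eq2}, and handling the diagonal term separately).
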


\begin{proof} [Proof of Proposition \ref{liodiscretization.prop3}]
Write
$(\Psi_k^NA_n-A_n\Psi_k^N) \Psi_{k'}^N= (c(\lambda, \lambda'))_{\lambda, \lambda'\in 2^{-n}\Zd}$.
Then
 for  $|k-k'|\le 8N$,
\begin{eqnarray*} 
|c(\lambda, \lambda')|
& = &
 \Big|\Big(\psi_0\big(\frac{\lambda-k}{N}\big)-\psi_0\big(\frac{\lambda'-k}{N}\big)\Big) a_n(\lambda, \lambda')
\psi_0\big(\frac{\lambda'-k'}{N}\big)\Big|
\nonumber\\
& \le &  \min\big(\frac{|\lambda-\lambda'|}{N}, 1\big) |a_n(\lambda, \lambda')|
\psi_0\big(\frac{\lambda'-k'}{N}\big)
\nonumber\\
&\le &
\left\{\begin{array} {ll}
 2^{n(d-1)+1} N^{-1} \int_{|t|\le 3\cdot 2^{-n}} r_K(t) dt  &\  {\rm if} \ |\lambda-\lambda'|\le 2^{-n+1}\\
 \min(|\lambda-\lambda'|/N,1) r_K((\lambda-\lambda')/2)  & \   {\rm if} \ |\lambda-\lambda'|>2^{-n+1}
 \end{array}\right.
 \end{eqnarray*}
 by the Lipschitz property for  the function $\psi_0$ and  the off-diagonal property for the matrix $A_n$ in  Proposition \ref{offdiagonaldecay.prop}.
 Therefore 
 \begin{equation}\label{liodiscretization.prop3.pf.eq3-}
 |c(\lambda, \lambda')|\le C g(\lambda-\lambda')\quad {\rm for \ all}  \ \lambda, \lambda'\in 2^{-n}\Zd,
 \end{equation}
 where  $(g(\lambda))_{\lambda\in 2^{-n}\Zd}$  is  a radially decreasing sequence defined by
 \begin{eqnarray*}
g(\lambda) & = &
 \Big(\frac{2^{n(d-1)}}{N}\int_{|t|\le 3\cdot 2^{-n}}r_K(t) dt+
 {b_K(2^{-n})\over \sqrt{N}}+b_K\big({\sqrt{N}\over 2}\big)\Big)\chi_{[-2^{-n+1}, 2^{-n+1}]^d}(\lambda) \nonumber \\
&&+\Big({1\over \sqrt{N}}r_K\big({\lambda \over 2}\big)+b_K\big({\sqrt{N}\over 2}\big)\Big)
\big(\chi_{[-\sqrt{N}, \sqrt{N}]^d\backslash [2^{-n+1}, 2^{-n+1}]^d}(\lambda)\big)
\nonumber \\
&&+r_K\big({\lambda\over 2}\big)\big(1-\chi_{[-\sqrt{N},\sqrt{N}]^d}(\lambda)\big).
  \end{eqnarray*}
Note that
 \begin{eqnarray}\label{liodiscretization.prop3.pf.eq3}
\sum_{\lambda\in 2^{-n} {\mathbb Z}^d} g(\lambda)
& \le &  C\Big(2^{nd}  N^{-1/2}  \int_{|t|\le \sqrt{N}/2 } r_K(t) dt
+{b_K(2^{-n})\over\sqrt{N}}
\nonumber\\
& & \qquad +2^{nd}N^{d/2}b_K\Big({\sqrt{N}\over 2}\Big)+
2^{nd}   \int_{|t|>\sqrt{N}/4 } r_K(t) dt\Big)\nonumber\\
& \le & C 2^{nd} \Big(N^{-1/2} \|r_K\|_1+ \int_{|t|>\sqrt{N}/4 } r_K(t) dt\Big).
\end{eqnarray}
Then  the conclusion \eqref{liodiscretization.prop3.eq2} for $|k-k'|\le 8N$ follows from
\eqref{liodiscretization.prop3.pf.eq3-}, \eqref{liodiscretization.prop3.pf.eq3},  Lemma \ref{imboundedness.lem} and
Proposition \ref{discreteapweight.prop1}.

\smallskip

For $|k-k'|>8N$,
\begin{eqnarray}\label{liodiscretization.prop3.pf.eq4}
|c(\lambda, \lambda')|  & = &
 \big|\psi_0\big(\frac{\lambda-k}{N}\big) a_n(\lambda, \lambda')
\psi_0\big(\frac{\lambda'-k'}{N}\big)\big|\nonumber\\
&\le &  r_K((k-k')/2) \chi_{k+[-2N, 2N]^d}(\lambda)\chi_{k'+[-2N, 2N]^d}(\lambda')
\end{eqnarray}
by  Proposition \ref{offdiagonaldecay.prop}.
Write $b=(b(\lambda))_{\lambda\in 2^{-n}\Zd}$. Then by \eqref{discreteaqweight.eq1}  and
\eqref{liodiscretization.prop3.pf.eq4} 
we obtain that
  \begin{eqnarray*}
 & & \|(\Psi_k^NA_n-A_n\Psi_k^N)\Psi^N_{k'} b\|_{p, w_n}
\nonumber\\
 & \le &  r_K ((k-k')/2)
\Big(\sum_{|\lambda-k|\le 2N}   w_n(\lambda)\Big)^{1/p}
\nonumber\\
& &\quad  \times \Big(\sum_{|\lambda'-k'|\le 2N} |b(\lambda')|^p w_n(\lambda')\Big)^{1/p}
 \Big(\sum_{|\lambda'-k'|\le 2N} (w_n(\lambda'))^{-1/(p-1)}\Big)^{(p-1)/p}
 \nonumber\\
& \le &  2^{nd} N^d r_K((k-k')/2) (A_p(w))^{1/p}
 \Big(\frac{\sum_{|\lambda-k|\le 2N}   w_n(\lambda)}{\sum_{|\lambda'-k'|\le 2N}   w_n(\lambda')} \Big)^{1/p}  \|b\|_{p, w_n}
\end{eqnarray*}
for $1<p<\infty$, and similarly
\begin{eqnarray*}
& & \|(\Psi_k^NA_n-A_n\Psi_k^N)\Psi^N_{k'} c\|_{1, w_n}
\nonumber\\
  & \le &  2^{nd} N^d
   r_K((k-k')/2)  A_1(w)
 \Big(\frac{\sum_{|\lambda-k|\le 2N}   w_n(\lambda)}{\sum_{|\lambda'-k'|\le 2N}   w_n(\lambda')} \Big) \|b\|_{1, w_n}
   \end{eqnarray*}
for $p=1$.
Hence the conclusion \eqref{liodiscretization.prop3.eq2} for $|k-k'|>8N$ follows.
\end{proof}

\section{Stability of localized integral operators} \label{liostability.section}

To prove Theorem \ref{liostability.tm}, we need several technical lemmas.

\begin{lem}\label{liostability.lem1} Let $1\le p<\infty$, $z\in \CC$, $w$ be an $A_p$-weight, and let  the kernel $K$ and the integral operator $T$  with kernel $K$
be as in Theorem \ref{liostability.tm}. Set
\begin{equation}\label{delta0.def} \delta_0=\min(r_0/(2 A_p(w)), \alpha/(3d))\end{equation} where $\alpha\in (0,1]$ and  $r_0\in (0, 1)$ are given  in  \eqref{kernelcondition} and
Proposition \ref{apweight.prop3} respectively.
If $z I-T$ has $L^{p}_{w^r}$-stability for some $r\in (0,1]$,
then it has $L^p_{w^{r(1+s)}}$-stability for all $s\in [-\delta_0, \delta_0]$ with $0\le r(1+s)\le 1$.
\end{lem}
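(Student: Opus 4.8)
\emph{Step 1: reduction to a discrete statement.} The overall plan is to push the assumed stability down to the discretization matrices $A_n$ of $T$, to perturb the discrete weight there using the commutator estimate of Proposition~\ref{liodiscretization.prop3}, and to push the conclusion back up to $L^p_{w^{r(1+s)}}$. I would first dispose of the case $z=0$, which is vacuous since then $zI-T=-T$ has no $L^p_{w^r}$-stability by Corollary~\ref{zerospectrum.cor}; so assume $z\ne 0$. Put $v:=w^{r}$. Applying Jensen's inequality to \eqref{apweight.eq1}--\eqref{apweight.eq2} shows that a power in $(0,1]$ of an $A_p$-weight is again an $A_p$-weight with no larger bound, so $v\in\mathcal{A}_p$ with $A_p(v)\le A_p(w)$, and likewise every target weight $w^{r(1+s)}$. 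If $\kappa>0$ is a stability constant of $zI-T$ on $L^p_v$, then by Proposition~\ref{lioapproximation.prop2} there is $n_0$ (depending only on $\kappa$, $p$, $d$ and the kernel) such that $zI-P_nTP_n$ has $L^p_v$-stability with constant $\kappa/2$ for all $n\ge n_0$. Since $z\ne 0$ and $P_nTP_n$ annihilates $(I-P_n)L^p_v$ while preserving $V^n_{p,v}$, using \eqref{lioapproximation.prop.pf.eq1} (near-orthogonality of the ``detail'' $(I-P_n)f$ and the ``average'' $P_nf$ in $L^p_v$), Proposition~\ref{liodiscretization.prop1}, and the isometry up to a fixed power of $2$ between $\ell^p_{v_n}$ and $V^n_{p,v}$, one checks that $L^p_v$-stability of $zI-P_nTP_n$ is \emph{equivalent} to $\ell^p_{v_n}$-stability of $z2^{nd}I-A_n$. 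Running this equivalence backwards for each weight $w^{r(1+s)}$ and invoking Proposition~\ref{lioapproximation.prop2} once more, the lemma reduces to: there is one $n\ge n_0$, taken large enough (uniformly in $s$), for which $z2^{nd}I-A_n$ has $\ell^p_{(w^{r(1+s)})_n}$-stability.

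\emph{Step 2: localizing the discrete operator.} Fix such an $n$ and write $B:=z2^{nd}I-A_n$, $u:=v_n$, a discrete $A_p$-weight with $A_p(u)\le C A_p(w)$ by Proposition~\ref{discreteapweight.prop1}. By the reverse-H\"older refinement of the appendix (the source of $r_0$) together with Proposition~\ref{discreteapweight.prop1}, the constraint $|s|\le\delta_0\le r_0/(2A_p(w))$ makes $(w^{r(1+s)})_n$ comparable to $u^{1+s}$ with absolute constants, so it suffices to deduce $\ell^p_{u^{1+s}}$-stability of $B$ from its $\ell^p_u$-stability. For this I would localize with the matrices $\Psi_k^N$ of \eqref{lio.eq6}: since $\sum_{k\in N\Zd}\psi_0((\,\cdot\,-k)/N)^{p}$ is bounded above and below by positive absolute constants, the weighted norms split, $\|c\|_{p,u^{1+s}}^{p}\approx\sum_k\|\Psi_k^Nc\|_{p,u^{1+s}}^{p}$ and similarly for $Bc$. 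Writing $\Psi_k^NBc=B\Psi_k^Nc+[\Psi_k^N,B]c$ with $[\Psi_k^N,B]=-[\Psi_k^N,A_n]$, a Schur-type summation of the bounds of Proposition~\ref{liodiscretization.prop3} over the cubes $k\in N\Zd$ lets me fix $N$ (an absolute constant) so that the aggregate commutator contribution is at most a small fixed fraction, say $1/4$, of the stability constant of $B$ on $\ell^p_u$.

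\emph{Step 3: transporting stability.} It then remains to bound $\sum_k\|B\Psi_k^Nc\|_{p,u^{1+s}}^{p}$ below by a constant multiple of $\sum_k\|\Psi_k^Nc\|_{p,u^{1+s}}^{p}$. I would use the off-diagonal decay of $A_n$ (Proposition~\ref{offdiagonaldecay.prop}) to confine $B\Psi_k^Nc$ to a bounded enlargement of the support of $\Psi_k^Nc$, up to an error controlled by the tail $\int_{|t|>N/C}r_K$; apply the $\ell^p_u$-stability of $B$ to each $\Psi_k^Nc$; and then pass from the resulting $\|\cdot\|_{p,u}$-estimates to $\|\cdot\|_{p,u^{1+s}}$-estimates over these enlarged cubes. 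This last conversion is where the doubling and reverse-H\"older refinements of the appendix enter: because $|s|\le\delta_0$, the passage between $u$ and $u^{1+s}$ on each cube costs only a bounded factor, and these factors match on the two sides of the inequality. Collecting the estimates yields $\ell^p_{u^{1+s}}$-stability of $B$ with an explicit constant. Here the admissible range of $s$ forced by the reverse-H\"older step is $|s|\le r_0/(2A_p(w))$, while the competing requirement that the approximation rate $2^{-n\alpha}$ of Proposition~\ref{lioapproximation.prop2} dominate the $n$-dependent constants accumulated in Steps~1--3 produces the other bound $|s|\le\alpha/(3d)$; together these give $\delta_0$. Unwinding Step~1 for $w^{r(1+s)}$ then gives $L^p_{w^{r(1+s)}}$-stability of $zI-T$, as required.

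\emph{Main obstacle.} The delicate point is the last conversion in Step~3. An $A_p$-weight is in general neither bounded nor slowly varying, so ``$u^{1+s}\approx(\text{const})\,u$ on a cube'' is false as stated, and one must instead use the $A_p$/doubling structure uniformly over all the localization cubes, simultaneously balancing the localization scale $N$, the perturbation size $|s|$, and the discretization level $n$. Proposition~\ref{liodiscretization.prop3} is precisely what keeps the localization errors summable, and the appendix refinements are precisely what make the weight comparisons uniform; arranging that the three error mechanisms — commutator, off-diagonal tail, and weight oscillation — are all simultaneously small is where the real work of the proof lies.
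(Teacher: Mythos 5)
Your overall architecture is the paper's: discretize via $P_n$ and $A_n$, localize with $\Psi_k^N$, control the commutators by Proposition~\ref{liodiscretization.prop3}, perturb the weight using the appendix refinements, and balance the resulting $n$-dependent loss against the approximation rate $2^{-n\alpha}$ of Proposition~\ref{lioapproximation.prop2}, which is where $\delta_0\le\alpha/(3d)$ comes from; your treatment of $z=0$ and the back-and-forth between continuous and discrete stability also match \eqref{liostability.lem1.eq8} and \eqref{discretetocontinuouspf}. The genuine gap is at the heart of the argument, namely the discrete weight-transfer step (the paper's Claim~1, inequality \eqref{liostability.lem1.eq888}), which you do not actually prove. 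Two specific points fail as you have ordered them. First, as you yourself concede in your ``main obstacle'' paragraph, the cube-by-cube passage between $u=(w^r)_n$ and $u^{1+s}$ does \emph{not} cost ``only a bounded factor'': on a localization cube of side $\sim N$ at level $n$ the ratio of the cell averages $u(\lambda)$ can be as large as $A_p(w)(C2^nN)^{dp}$ (this is exactly \eqref{liostability.lem1.eq17}), so the conversion loses a factor of order $2^{Cnd|s|}$, and it is precisely this geometric-in-$n$ loss that must be quantified and then beaten by $2^{-n\alpha}$; leaving it as an acknowledged obstacle leaves the lemma unproved for the stated $\delta_0$. Second, your order of operations is inconsistent: you split $\|Bc\|_{p,u^{1+s}}$ and propose to fix $N$ as an absolute constant so that the aggregate commutator contribution is a small fraction of ``the stability constant of $B$ on $\ell^p_u$'' --- but the commutator term lives in the $u^{1+s}$-norm, where the only lower bound available for the main term $\sum_k\|B\Psi_k^Nc\|^p_{p,u^{1+s}}$ carries the degraded constant $\sim C_1 2^{nd}2^{-Cnd|s|}$; a fixed $N$ (giving commutator size $\sim 2^{nd}(N^{-1/2}\|r_K\|_1+\int_{|t|>\sqrt N/4}r_K))$ cannot be absorbed by that for large $n$ and $s\ne 0$, and letting $N$ grow with $n$ feeds back into your conversion constants.

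The paper circumvents both problems by doing the entire localization--commutator argument \emph{in the original weight} $(w^r)_n$, with the localized norms normalized by the local weight masses $(\alpha^r)_k$ of \eqref{liostability.lem1.eq10}: this normalization cancels the mass ratios appearing in Proposition~\ref{liodiscretization.prop3}, turns the estimate into the convolution inequality \eqref{liostability.lem1.eq11}, and allows the commutator to be absorbed against the \emph{undegraded} constant $C_1/2$ with $N$ chosen independently of $n$, after which iteration in the Beurling algebra gives \eqref{liostability.lem1.eq14}. Only then is the weight changed, per cube, using Proposition~\ref{apweight.prop3} together with \eqref{liostability.lem1.eq17}, so the total loss is the single factor $2^{2nd|s|}$ of \eqref{liostability.lem1.eq18}--\eqref{liostability.lem1.eq888}; summation via Lemma~\ref{imboundedness.lem} (with the discrete $A_p$-weight $((\alpha^{r(1+s)})_k)_k$ from Proposition~\ref{discreteapweight.prop1}) then yields Claim~1, and \eqref{discretetocontinuouspf} converts it back because $2d\delta_0<\alpha$. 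To complete your proposal you would need to supply exactly this normalization-and-iteration device (or an equivalent), since without it neither the Schur summation of the far commutator terms nor the absorption threshold works out.
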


\begin{lem}\label{liostability.lem2}
Let $1\le p<\infty$, $z\in \CC$, $w$ be an $A_p$-weight, and let the kernel $K$ and the integral operator $T$  with kernel $K$
be as in Theorem \ref{liostability.tm}. Set
\begin{equation}\label{delta1.def} \delta_1=\min\big((p\ln 2+2\ln A_p(w))^{-1}D_1, ( 2(2^d+1)+2d +4 (2^d+1) \ln A_p(w))^{-1}\alpha \big)\end{equation} where $\alpha\in (0,1]$ and
$D_1\in (0, 1)$ are given  in  \eqref{kernelcondition} and
Proposition \ref{apweight.prop2} respectively.
If $zI-T$ has $L^{p}_{w^r}$-stability for some $r\in [0, \delta_1]$,
then it has $L^p_{w^{r'}}$-stability for all $r'\in [0, \delta_1]$.
\end{lem}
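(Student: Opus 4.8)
The plan is to transfer the stability question from the integral operator to its discretization matrices $A_n$, bootstrap at the matrix level by a localization argument resting on the commutator estimate of Proposition~\ref{liodiscretization.prop3}, and then transfer back. First, the statement is vacuous when $z=0$: by Corollary~\ref{zerospectrum.cor} one has $0\in s_{p,v}(T)$ for every $A_p$-weight $v$, so $zI-T$ has $L^p_{w^r}$-stability for no $r$ in that case. I therefore assume $z\ne 0$, and it clearly suffices to show that $L^p_{w^{r}}$-stability of $zI-T$ for one $r\in[0,\delta_1]$ forces $L^p_{w^{r'}}$-stability for an arbitrary $r'\in[0,\delta_1]$.

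To reduce to the matrix level, I would use that $w^{r}$ and $w^{r'}$ are $A_p$-weights with $A_p$-bounds at most $A_p(w)$ (Jensen's inequality), so that Proposition~\ref{lioapproximation.prop2} lets one pick $n\in\ZZ_+$ --- large, depending only on the $L^p_{w^{r}}$-stability constant of $zI-T$, on $\alpha,p,d,A_p(w)$, and on the universal constants entering the core step below --- for which $zI-P_nTP_n$ has $L^p_{w^{r}}$-stability. By Proposition~\ref{liodiscretization.prop1}, Remark~\ref{lioapproximation.prop1}, Proposition~\ref{discreteapweight.prop1}, and the comparison $(w^{r})_n\asymp(w_n)^{r}$ --- Jensen's inequality giving the upper bound and the reverse-H\"older inequality (Proposition~\ref{apweight.prop2}) the lower one, with a constant uniform over $r\in[0,\delta_1]$, which is exactly what limits this lemma to exponents in $[0,\delta_1]$ --- this is equivalent to $\ell^p_{(w_n)^{r}}$-stability of the infinite matrix $zI-2^{-nd}A_n$. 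Running the same chain of identifications backwards, using now $z\ne0$ to control the action on $\ker P_n$, the uniform bound on $A_p(w^{r'})$, and the already-fixed large $n$, one sees that $\ell^p_{(w_n)^{r'}}$-stability of $zI-2^{-nd}A_n$ implies $L^p_{w^{r'}}$-stability of $zI-P_nTP_n$ and then, by Proposition~\ref{lioapproximation.prop2} once more, of $zI-T$. So everything reduces to the matrix bootstrap.

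The core step is then: if $L:=zI-2^{-nd}A_n$ has $\ell^p_{(w_n)^{r}}$-stability (with constant $c_0$), it has $\ell^p_{(w_n)^{r'}}$-stability, with quantitative control of the new constant. Write $A=2^{-nd}A_n$, $u=(w_n)^{r}$, $v=(w_n)^{r'}$; these are discrete $A_p$-weights with uniformly bounded discrete $A_p$-bounds, and $A$ has finite off-diagonal norm $\|A\|_{\mathcal B}$ (Proposition~\ref{offdiagonaldecay.prop}). For $b\in\ell^p_v$ and a scale $N\in\NN$, I would decompose $b=\sum_{k\in N\Zd}\Psi_k^Nb$, using $1\le\sum_{k\in N\Zd}\psi_0((\cdot-k)/N)\le3^d$ so that $\|b\|_{p,v}^p\asymp\sum_k\|\Psi_k^Nb\|_{p,v}^p$, and on each block change the weight via $\rho:=(v/u)^{1/p}=(w_n)^{(r'-r)/p}$: since $u\rho^p=v$, one has $\|\Psi_k^Nb\|_{p,v}=\|\rho\Psi_k^Nb\|_{p,u}\le c_0^{-1}\|L(\rho\Psi_k^Nb)\|_{p,u}$. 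The identity $L(\rho\Psi_k^Nb)=\rho\Psi_k^NLb-[A,\rho]\Psi_k^Nb-\rho[A,\Psi_k^N]b$ splits this into a main term, which sums over $k$ to $\asymp\|Lb\|_{p,v}^p$, and two error terms. These I would estimate, summed over $k$, using again $u\rho^p=v$ (which turns $\ell^p_u$-norms of $\rho(\cdots)$ into $\ell^p_v$-norms), the off-diagonal decay of $A$ (Proposition~\ref{offdiagonaldecay.prop}), the commutator estimate of Proposition~\ref{liodiscretization.prop3}, and the boundedness of matrices on discrete $A_p$-weighted sequence spaces (Lemma~\ref{imboundedness.lem}); the outcome should be a bound of the form $Cc_0^{-1}\big(\|Lb\|_{p,v}+\eta(N)\|b\|_{p,v}\big)$ for $\|b\|_{p,v}$. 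In estimating the $[A,\rho]\Psi_k^N$-contribution, the restriction $r,r'\le\delta_1$ is essential: the oscillation of $\rho$ on a cube of side $O(N)$ is at most $C(1+N)^{d\delta_1}$ by the discrete $A_p$-property of $w_n$ (Proposition~\ref{discreteapweight.prop1}) and $|r-r'|\le\delta_1$, and one needs $\delta_1$ small (in particular $d\delta_1<1/2$) so that this factor is beaten by the $N^{-1/2}$-type gain in Proposition~\ref{liodiscretization.prop3}, leaving $\eta(N)\to0$ as $N\to\infty$. Taking $N$ so large that $Cc_0^{-1}\eta(N)\le1/2$ then yields $\|b\|_{p,v}\le2Cc_0^{-1}\|Lb\|_{p,v}$, the desired $\ell^p_v$-stability.

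The step I expect to be the main obstacle is precisely this balancing of scales in the core step: $N$ must be chosen large enough that the approximation error $CD_02^{-n\alpha}$ (built into the choice of $n$) and the commutator error $\eta(N)$ are both small relative to the --- possibly tiny --- stability constant, while the localization at scale $N$ unavoidably produces a weight-oscillation factor growing with $N$; reconciling these two demands is what forces $\delta_1$ to be small and tied to $\alpha$, to $p$, and to the reverse-H\"older exponent $D_1$, exactly as in \eqref{delta1.def}. Carrying out the detailed bookkeeping of the weight through the three-term commutator decomposition --- including the far-range ($|k-k'|>8N$) interactions, handled via the decaying estimate of Proposition~\ref{liodiscretization.prop3} and Lemma~\ref{imboundedness.lem} --- and verifying the uniform $A_p$-control of $w^{r'}$ and of $(w_n)^{r'}$ for $r'\in[0,\delta_1]$ are the technical heart of the argument; the unweighted skeleton is that of \cite[Section~4]{shincjfa09}, while the weighted bookkeeping is the new ingredient, c.f.\ \cite{sunconst10}.
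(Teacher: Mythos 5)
Your reduction framework (discretize to $A_n$ via $P_nTP_n$, localize with $\Psi_k^N$, use $z\ne 0$ and Proposition~\ref{lioapproximation.prop2} to transfer back, compare $(w^r)_n$ with $(w_n)^r$) matches the paper, but the core weight-changing step is where the argument breaks. You change the weight perturbatively, writing $L(\rho\Psi_k^Nb)=\rho\Psi_k^NLb-[A,\rho]\Psi_k^Nb-\rho[A,\Psi_k^N]b$ with $\rho=(w_n)^{(r'-r)/p}$ and hoping the errors are $\eta(N)\|b\|_{p,v}$ with $\eta(N)\to0$. Two things go wrong. First, $[A,\rho]$ carries no gain in $N$ at all: unlike $\Psi_k^N$, the function $\rho$ has no Lipschitz structure at scale $N$, so there is no analogue of the factor $\min(|\lambda-\lambda'|/N,1)$ that produces the $N^{-1/2}$ term in Proposition~\ref{liodiscretization.prop3}; the near-diagonal size of $[A,\rho]$ is governed by weight-ratio oscillations between nearby cells of side $2^{-n}$, which do not shrink as $N\to\infty$ (and in fact bring in factors growing like powers of $2^{n}$). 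Since the discrete stability constant $c_0$ is arbitrary and $\delta_1$ in \eqref{delta1.def} is fixed independently of it, the absorption $Cc_0^{-1}\eta(N)\le 1/2$ cannot be forced. Second, the oscillation of $\rho$ seen by $[A,\rho]\Psi_k^N$ is not confined to a cube of side $O(N)$: only the column index is localized, the row index runs over all of $2^{-n}\Zd$, and bounding the far range requires sums like $\sum_\lambda r_K(\lambda/2)\,(1+2^n|\lambda|)^{cd\delta_1}$, which the mere integrability of $r_K$ assumed in \eqref{kernelcondition} does not give. The same far-field weight-ratio problem reappears in your treatment of $\rho[A,\Psi_k^N]b$, because you sum $\|\Psi_k^Nb\|_{p,v}^p$ directly instead of the normalized quantities $\|\Psi_k^Nb\|_{p,(w^r)_n}/((\alpha^r)_k)^{1/p}$; it is precisely this normalization that makes the ratio in the far-field bound of Proposition~\ref{liodiscretization.prop3} cancel, leaving the summable kernel $N^dr_K((k-k')/2)$.

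The paper's mechanism for changing the weight is not perturbative. It first derives the normalized localized inequality \eqref{liostability.lem2.pf.eq1} for the weight $w^r$ (only the commutator $[A_n,\Psi_k^N]$ enters, and that one does decay in $N$), and then passes from $w^r$ to $w^{r'}$ inside each block by brute-force comparison of the local averages through the doubling/BMO estimates of Proposition~\ref{apweight.prop2} (see \eqref{liostability.lem2.pf.eq2}), accepting a loss of size $(2^p A_p(w)^2)^{(2^d+1)(r+r')n/p}\,2^{d(p-1)(r+r')n/p}$ in the discrete stability constant; this loss grows with $n$ but is beaten by the $2^{-n\alpha}$ approximation gain when transferring back to $zI-T$ via the argument of \eqref{discretetocontinuouspf}, and that balance in $n$ is exactly what the formula \eqref{delta1.def} for $\delta_1$ encodes. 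Your proposed balance, weight oscillation $(1+N)^{d\delta_1}$ against the $N^{-1/2}$ commutator gain, is a balance in $N$, not in $n$, and it is not the one that makes the lemma true; to repair the argument you would need to drop the $[A,\rho]$ perturbation, reinstate the $((\alpha^{r})_k)^{1/p}$-normalization, and accept (and then compensate for) an $n$-dependent loss in the weight change.
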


\begin{lem}\label{liostability.lem3}
Let $1\le p<\infty$, $z\in \CC$,  and let  the kernel $K$ and the integral operator $T$  with kernel $K$
be as in Theorem \ref{liostability.tm}.
Set
$\delta_2=\alpha/(3d)$ with $\alpha\in (0,1]$ given  in  \eqref{kernelcondition}. If $zI-T$ has $L^{p}$-stability,
then it has $L^{p(1+s)}$-stability for all $s\in [-\delta_2, \delta_2]$ with $p(1+s)\ge 1$.
\end{lem}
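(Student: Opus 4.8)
The plan is to mirror the proofs of Lemmas~\ref{liostability.lem1} and~\ref{liostability.lem2}, replacing the change of the weight exponent there by a change of the integrability exponent $p$, and carrying everything out on the discrete side through the matrices $A_n$. Write $q=p(1+s)$ with $|s|\le\delta_2$ and $q\ge 1$, and suppose $\|(zI-T)f\|_p\ge c_0\|f\|_p$ for all $f\in L^p$; this forces $z\ne 0$ by Corollary~\ref{zerospectrum.cor}. First I would use Proposition~\ref{lioapproximation.prop2} with the trivial weight, which gives $\|P_nTP_n-T\|_{{\mathcal B}(L^p)}\le CD_02^{-n\alpha}$, to deduce that for all $n$ beyond some $n_0=n_0(c_0,D_0,\alpha,p,d)$ and all $g\in P_nL^p$ one has $\|(zP_n-P_nTP_n)g\|_p\ge\frac{c_0}{2}\|g\|_p$; expanding $g=\sum_\lambda c(\lambda)\phi_{n,2^n\lambda}$, using $\|g\|_p=2^{nd(1/2-1/p)}\|c\|_p$ and Proposition~\ref{liodiscretization.prop1}, this is equivalent to
\begin{equation}\label{planlem3.eq1}
\|(z2^{nd}I-A_n)c\|_p\ge\tfrac{c_0}{2}\,2^{nd}\|c\|_p\qquad\text{for all }c\in\ell^p,\ n\ge n_0.
\end{equation}

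The heart of the argument is to upgrade~\eqref{planlem3.eq1} to the $\ell^q$-estimate $\|(z2^{nd}I-A_n)c\|_q\ge c_1 2^{nd}\|c\|_q$ for all $c\in\ell^q$ and all large $n$, with $c_1>0$ independent of $n$ (note $z2^{nd}I-A_n$ is bounded on $\ell^q$ by Proposition~\ref{anboundedness.prop}). Fixing $n$ and writing $B_n=z2^{nd}I-A_n$, I would choose a localization scale $N\in\NN$ and use the bounded partition $\theta(\lambda)=\sum_{k\in N\Zd}\psi_0((\lambda-k)/N)\in[1,C_d]$, so that $\|B_nc\|_q^q$ is comparable to $\sum_k\|\Psi_k^NB_nc\|_q^q$. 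Since the scalar part of $B_n$ commutes with $\Psi_k^N$, $\Psi_k^NB_n=B_n\Psi_k^N-(\Psi_k^NA_n-A_n\Psi_k^N)$, and $(a-b)_+^q\ge2^{-q}a^q-b^q$ gives
\[
\|B_nc\|_q^q\ge C^{-1}\sum_k\|B_n\Psi_k^Nc\|_q^q-C\sum_k\|(\Psi_k^NA_n-A_n\Psi_k^N)c\|_q^q .
\]
For the first sum, each $\Psi_k^Nc$ is supported on a block of at most $(4N)^d$ lattice points, so combining~\eqref{planlem3.eq1} with the two-sided comparison of the $\ell^p$- and $\ell^q$-norms on that block (constant $(4N)^{d|1/p-1/q|}$) and with the off-diagonal decay of $A_n$ from Proposition~\ref{offdiagonaldecay.prop} (to confine the bulk of $B_n\Psi_k^Nc$ to an enlarged block) yields $\|B_n\Psi_k^Nc\|_q\ge C^{-1}c_0\,2^{nd}N^{-d|1/p-1/q|}\|\Psi_k^Nc\|_q$. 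For the second sum, writing $c=\theta^{-1}\sum_{k'}\Psi_{k'}^Nc$, applying Proposition~\ref{liodiscretization.prop3} together with Lemma~\ref{imboundedness.lem}, and summing the two cases of Proposition~\ref{liodiscretization.prop3} over $k'$, produces a bound $C2^{nd}\varepsilon(N)\|c\|_q$ with $\varepsilon(N)=N^{-1/2}\|r_K\|_1+\int_{|t|>\sqrt N/4}r_K\to 0$. Hence, since $\sum_k\|\Psi_k^Nc\|_q^q$ is comparable to $\|c\|_q^q$ and $|1/p-1/q|=|s|/(p(1+s))$,
\[
\|B_nc\|_q^q\ge 2^{ndq}\big(C^{-1}c_0^{q}N^{-dq|s|/(p(1+s))}-C\varepsilon(N)^q\big)\|c\|_q^q .
\]
The step I expect to be the main obstacle is exactly the quantitative bookkeeping that turns this into a genuine lower bound uniformly for $|s|\le\alpha/(3d)$: one has to take $N$ large enough that $\varepsilon(N)$ is negligible while keeping $N^{d|s|}$ controlled, and this must be played off against the $2^{-n\alpha}$ gain from Proposition~\ref{lioapproximation.prop2} and the $3\cdot2^{-n}$-scale of the diagonal blocks in Proposition~\ref{offdiagonaldecay.prop}; this interplay is what pins the admissible range to $\delta_2=\alpha/(3d)$.

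Finally I would transfer the resulting $\ell^q$-estimate back to $L^q$. For $f\in L^q$ and $m$ large, the identity $(zI-T)f=(zP_m-P_mTP_m)P_mf+z(I-P_m)f-(T-P_mTP_m)f$, the $L^q$-form of the $\ell^q$-estimate (namely $\|(zP_m-P_mTP_m)g\|_q\ge c_1\|g\|_q$ for $g\in P_mL^q$), Proposition~\ref{lioapproximation.prop2}, and the convergences $P_mf\to f$ and $(I-P_m)f\to 0$ in $L^q$ as $m\to\infty$, together give $\|(zI-T)f\|_q\ge c_1\|f\|_q$. Hence $zI-T$ has $L^{p(1+s)}$-stability, as claimed.
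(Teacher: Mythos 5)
There is a genuine gap, and it sits exactly where you flagged ``the main obstacle.'' First, a bookkeeping error that matters: $\Psi_k^Nc$ lives on the lattice $2^{-n}\Zd$, so its support block $\{|\lambda-k|\le 2N\}$ contains about $(2^{n+2}N)^d$ points, not $(4N)^d$; hence the $\ell^p$--$\ell^q$ comparison on a block costs $\big(2^{(n+2)d}N^d\big)^{|1/p-1/q|}$, i.e.\ it carries an unavoidable factor of order $2^{nd|s|}$ in addition to $N^{d|s|}$ (this is precisely \eqref{liostability.lem3.pf.eq5} in the paper, and it is the reason the admissible range is $\delta_2=\alpha/(3d)$). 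Once this factor is restored, your decomposition $\|B_nc\|_q^q\gtrsim\sum_k\|B_n\Psi_k^Nc\|_q^q-\sum_k\|[\Psi_k^N,A_n]c\|_q^q$ forces you to absorb the commutator error $C2^{nd}\varepsilon(N)\|c\|_q$, with $\varepsilon(N)=N^{-1/2}\|r_K\|_1+\int_{|t|>\sqrt N/4}r_K(t)\,dt$, into a main term that is smaller than $2^{nd}\|c\|_q$ by the factor $(2^{n}N)^{-d|s|/(p(1+s))}$. For fixed $N$ this main term tends to $0$ as $n\to\infty$ while $\varepsilon(N)$ does not, so the bracket in your final display eventually becomes negative; and letting $N$ grow with $n$ does not help, because the tail integral in $\varepsilon(N)$ is merely $o(1)$ with no rate (only $r_K\in L^1$ is assumed), whereas the transfer step back to $L^q$ (the analogue of \eqref{discretetocontinuouspf}) additionally requires the discrete stability constant to dominate the approximation error $CD_02^{-n\alpha}$, i.e.\ to decay strictly slower than $2^{-n\alpha}$. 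There is in general no pair $(n,N)$ satisfying both constraints, so the argument as outlined cannot be closed.

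The paper avoids this clash by reversing the order of operations. It first runs the commutator argument entirely at the $\ell^p$ level, where no exponent-change loss occurs: applying the discretized $L^p$-stability to each $\Psi_k^Nc$, using Proposition \ref{liodiscretization.prop3}, and iterating in the Banach algebra ${\mathcal B}$ (as in the proof of Lemma \ref{liostability.lem1}, leading to \eqref{liostability.lem1.eq14}) absorbs the $\varepsilon(N)$-terms once and for all, with $N$ fixed independently of $n$, and yields the localized inequality \eqref{liostability.lem3.pf.eq4}, in which \emph{both} sides are norms of sequences supported on blocks of $\approx(2^{n+2}N)^d$ points and the kernel $V$ is summable uniformly in $n$. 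Only then does one switch exponents blockwise via \eqref{liostability.lem3.pf.eq5}, paying a clean factor $C2^{2nd|s|}$ with no competing $\varepsilon(N)$; Lemma \ref{imboundedness.lem} (trivial weight) then gives $\|c\|_{p(1+s)}\le C2^{2nd|s|}\|(zI-2^{-nd}A_n)c\|_{p(1+s)}$, and the transfer back to $L^{p(1+s)}$ works because $2d|s|\le 2d\delta_2=2\alpha/3<\alpha$. So your ingredients are the right ones, but you must bootstrap away the commutators at the original exponent $p$ \emph{before} changing the exponent, rather than balancing $\varepsilon(N)$ against an exponentially small (in $n$) main term afterwards.
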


We assume that the conclusions in the  above three lemmas hold and  proceed to prove Theorem \ref{liostability.tm}  by  the bootstrap technique.

\begin{proof}[Proof of Theorem \ref{liostability.tm}]
We start from assuming that $zI-T$ has the $L^p_w$-stability for some $z\in \CC, p\in [1, \infty)$ and   $w\in {\mathcal A}_p$, and we want to prove that
$zI-T$ has the $L^{p'}_{w'}$-stability for any $ p'\in [1, \infty)$ and   $w'\in {\mathcal A}_{p'}$. 
Let $\delta_0$ and $\delta_1$ be as in \eqref{delta0.def} and \eqref{delta1.def} respectively, and select an integer $l_0$ sufficiently large such that $(1-\delta_0)^{l_0}\le \delta_1$.
 Iteratively applying  Lemma \ref{liostability.lem1} with
  $s=-\delta_0$  and $r=(1-\delta_0)^l$ for $l=0, 1, \ldots, l_0-1$,
we obtain that $zI-T$ has $L^p_{w^{(1-\delta_0)^l}}$-stability for all $l=1, \ldots, l_0$. Then applying  Lemma \ref{liostability.lem2} with $r=(1-\delta_0)^{l_0}$ and $r'=0$ leads to
the $L^p$-stability of $zI-T$.

 Select an integer $\ell_1\in \NN$
 and $s\in [-\delta_2, \delta_2]$ such that $(1+s)^{l_1}=p'/p$.
 Then  iteratively applying  Lemma \ref{liostability.lem3} with $p$ replaced by $p(1+s)^l, l=0, 1, \ldots, l_1-1$,
 yields the $L^{p'}$-stability of $zI-T$.

 Let  $\delta_0^\prime$ and $\delta_1^\prime$ be  as in Lemmas \ref{liostability.lem1} and
  \ref{liostability.lem2} with $p$ replaced by $p'$ and $w$ by $w'$, and select an integer $l_3\in \NN$ such that $(1+\delta_0^\prime)^{-l_3}\le \delta_1^\prime$. Applying  Lemma \ref{liostability.lem2} with $p$ replaced by $p'$, $w$ by $w'$, $r$ by $0$ and $r'$ by $(1+\delta_0^\prime)^{-l_3}$ leads to the  $L^{p'}_{(w')^{(1+\delta_0^\prime)^{-l_3}}}$-stability of $zI-T$. We then
  reach the desired  $L^{p'}_{w'}$-stability of the operator $zI-T$ by
   iteratively applying  Lemma \ref{liostability.lem1} with
   $p$ replaced by $p'$, $w$ by $w'$, $s$ by $\delta_0^\prime$
   and $r$ by $(1+\delta_0^\prime)^{-l_3+l}, l=0, 1, \cdots, l_3-1$.
 \end{proof}

\subsection{Proof of Lemma \ref{liostability.lem1}} Let $z I-T$ have the  $L^p_{w^r}$-stability. Then
there exists a positive constant $C_1$ such that
\begin{equation}\label{liostablity.lem1.eq1}
\|(z I-T)f\|_{p,w^r}\ge C_1\|f\|_{p,w^r} \quad {\rm for \ all} \ f\in L^p_{w^r}.
\end{equation}
From  Proposition \ref{lioapproximation.prop2} it follows that
\begin{eqnarray}\label{liostability.lem1.eq2}
\|(T-P_nTP_n)f\|_{p,w^r} & \le &  C_2 D_0 2^{-\alpha n} (A_p(w^r))^{1/p} \|f\|_{p, w^r}\nonumber\\
& \le &   C_2 D_0 2^{-\alpha n} (A_p(w))^{1/p} \|f\|_{p, w^r} \quad {\rm for \ all} \ f\in L^p_{w^r},
\end{eqnarray}
where $D_0=\|r_K\|_1+\sup_{0<\delta\le 1} \delta^{-\alpha} \|r_K \chi_{[-\delta, \delta]}\|_1+ \sup_{0<\delta\le 1}
\delta^{-\alpha}\|r_{\omega_\delta(K)}\|_1$ and $C_2$ is an absolute constant  in Proposition \ref{lioapproximation.prop2}. 
Let $n_0$ be a positive  integer such that
$ C_2 D_0 2^{-\alpha n_0} (A_p(w))^{1/p}\le C_1/2$.  
 Then
 for  all $n\ge n_0$ and $f\in L^p_{w^r}$,
\begin{equation}\label{liostability.lem1.eq3}
\|( z I-P_nTP_n)f\|_{p, w^r}\ge \frac{C_1}{2} \|f\|_{p, w^r}
\end{equation}
 by  \eqref{liostablity.lem1.eq1} and \eqref{liostability.lem1.eq2}. 
 Define
\begin{equation} \label{liostability.lem1.eq4}
(w^r)_n= \Big(2^{nd} \int_{\lambda+2^{-n}[-1/2,1/2)^d} (w(x))^r dx\Big)_{\lambda\in 2^{-n}\Zd}
\end{equation}
and
\begin{equation}\label{liostability.lem1.eq5}
(V^r)_n=\Big\{\sum_{ \lambda\in 2^{-n}\Zd} c(\lambda) \phi_{n, 2^n\lambda}\Big| \sum_{\lambda\in 2^{-n}\Zd} |c(\lambda)|^p (w^r)_n(\lambda)<\infty\Big\}.\end{equation}
Note that
for any $f_n:=\sum_{ \lambda\in 2^{-n}\Zd} c(\lambda) \phi_{n, 2^n\lambda}\in (V^r)_n$,
\begin{eqnarray} \label{liostability.lem1.eq6}
\|f_n\|_{p, w^r}   & =& \Big( 2^{ndp/2} \sum_{\lambda\in 2^{-n}\Zd} |c(\lambda)|^p \int_{\lambda+2^{-n}[-1/2,1/2)^d} w(x)^r dx\Big)^{1/p}\nonumber\\
& = &  2^{nd(1/2-1/p)} \|c\|_{p, (w^r)_n}
\end{eqnarray}
and
\begin{equation}\label{liostability.lem1.eq7}
\|(z I-P_nTP_n)f_n\|_{p, w^r}= 2^{nd(1/2-1/p)} \| (z I- 2^{-nd} A_n)c\|_{p, (w^r)_n}
\end{equation}
by Proposition \ref{liodiscretization.prop1},
where $A_n$ is defined in \eqref{an.def}.
Then applying \eqref{liostability.lem1.eq3} to $f_n\in (V^r)_n$, and using \eqref{liostability.lem1.eq6} and \eqref{liostability.lem1.eq7}, we obtain
a discretized version of the $L^p_{w^r}$-stability of  $zI-T$:
\begin{equation} \label{liostability.lem1.eq8}
\|(z I-2^{-nd} A_n) c\|_{p, (w^r)_n}\ge \frac{C_1}{2} \|c\|_{p, (w^r)_n} \quad {\rm for \ all}
\ c\in \ell^p_{(w^r)_n}\ {\rm and} \ n\ge n_0.
\end{equation}
To prove the $L^p_{w^{r(1+s)}}$-stability of  $zI-T$, we need the following claim, a weak version of the above stability with weight $w^r$ replaced by $w^{r(1+s)}$.

{\bf Claim 1}: {\em There exists a positive constant $\tilde C $ 
such that
\begin{equation}\label{liostability.lem1.eq888}
 \|(zI-2^{-nd}A_n)c\|_{p, (w^{r(1+s)})_n} \ge \tilde C 2^{-2nd|s|} \|c\|_{p, (w^{r(1+s)})_n}
 \end{equation}
 for all $c\in \ell^p_{w^{r(1+s)}_n}$ and $n\ge n_0$.
}

We assume that Claim 1 holds and proceed our proof.
Applying \eqref{liostability.lem1.eq6} and \eqref{liostability.lem1.eq7} with $f_n$ replaced by  $P_n f$ and $w^r$ by $w^{r(1+s)}$ and
using  \eqref{liostability.lem1.eq888}, we have
 \begin{equation}\label{liostability.lem1.eq21}
C_2 2^{-2nd|s|}\|P_nf\|_{p, w^{r(1+s)}}\le  \|(zI-P_nTP_n) P_n f\|_{p, w^{r(1+s)}}
\end{equation}
for all $f\in L^p_{w^{r(1+s)}}$ and  $n\ge n_0$.
As noted in  Remark \ref{lioapproximation.prop1},
\begin{eqnarray}\label{liostability.lem1.eq22}
\|g\|_{p, w^{r(1+s)}}   & \le &   \|P_ng\|_{p, w^{r(1+s)}}+\|(I-P_n)g\|_{p, w^{r(1+s)}}\nonumber\\
&  \le &    (1+2 A_p(w)) \|g\|_{p, w^{r(1+s)}}\quad {\rm for \ all} \ g\in L^p_{w^{r(1+s)}}.
\end{eqnarray}
Let integer $n_1$ be so chosen  that
 $\tilde C 2^{-2 n_1d\delta_0}\le |z|$ and  $C D_0 (A_p(w))^{1/p} 2^{-n_1\alpha/3}\le \tilde C/2$ where
 $C$ is the positive constant in Proposition \ref{lioboundedness.prop1}.
Recall that $\delta_0<\alpha/(3d)$ by assumption and $z\ne 0$ by \eqref{zerospectrum.cor.pf.eq1} and \eqref{liostablity.lem1.eq1}.
Then applying   \eqref{liostability.lem1.eq21} and \eqref{liostability.lem1.eq22} and letting $n=\max(n_0, n_1)$,
we obtain that
\begin{eqnarray}\label{discretetocontinuouspf}
& & \|(zI-T)f\|_{p, w^{r(1+s)}}
\nonumber\\
& \ge & (1+2 A_p(w))^{-1} \big( \|P_n(zI-T)f\|_{p, w^{r(1+s)}}+
\|(I-P_n)(zI-T)f\|_{p, w^{r(1+s)}}\big)
\nonumber\\
& \ge & (1+2 A_p(w))^{-1} \big( \|P_n(zI-T)P_nf\|_{p, w^{r(1+s)}}+ |z| \|(I-P_n)f\|_{p, w^{r(1+s)}}
\nonumber\\
& & -
\|P_n(zI-T)(I-P_n)f\|_{p, w^{r(1+s)}}- \|(I-P_n)Tf\|_{p, w^{r(1+s)}}\big)
\nonumber\\
& \ge & (1+2 A_p(w))^{-1}\big( \tilde C 2^{-2 nd|s|} \|P_nf\|_{p, w^{r(1+s)}}+ |z| \|(I-P_n)f\|_{p, w^{r(1+s)}}
\nonumber\\
& & -
C D_0 (A_p(w))^{1/p} 2^{-n\alpha} \|f\|_{p, w^{r(1+s)}}\big)
\nonumber\\
\quad & \ge &  (1+2 A_p(w))^{-1}\tilde  C 2^{-2 nd\delta_0-1}  \|f\|_{p, w^{r(1+s)}}
\end{eqnarray}
for all $f\in L^p_{w^{r(1+s)}}$ with $s\in [-\delta_0, \delta_0]$,
where $C$ is the positive constant in Proposition \ref{lioboundedness.prop1}.
This establishes the desired $L^p_{w^{r(1+s)}}$-stability for the operator $zI-T$ when $|s|\le \delta_0$.

\bigskip
Now it remains to prove  Claim 1.  Let $N$ be a sufficiently large integer chosen later and $\Psi_k^N, k\in N\Zd,$ be given in \eqref{lio.eq6}. Define
$\Phi_N=\big(\sum_{k\in N\Zd} (\Psi_k^N)^2\big)^{-1}$. 
Then $\Phi_N$ is a diagonal matrix with diagonal entries being positive and less than one, which implies that
\begin{equation}\label{liostability.lem1.eq9-1}
\|\Phi_N c\|_{p, (w^r)_n}\le \|c\|_{p, (w^r)_n}\quad {\rm for \ all} \ c\in \ell^p_{(w^r)_n}.
\end{equation}
Define
\begin{equation} \label{liostability.lem1.eq10}
(\alpha^r)_k=\sum_{|\lambda-k|\le 2N}  (w^r)_n(\lambda)=
 2^{nd}\int_{k+[-2N-2^{-n-1}, 2N+2^{-n-1})^d} w(x)^r dx, \ k\in N\Zd.
\end{equation}
By
\eqref{liostability.lem1.eq8}, 
 \eqref{liostability.lem1.eq9-1},
\eqref{liostability.lem1.eq10} and Proposition \ref{liodiscretization.prop3},  we get
\begin{eqnarray*}
& & \frac{C_1}{2} \frac{\|\Psi^N_kc\|_{p, (w^r)_n}}{ ((\alpha^r)_k)^{1/p} }
\le  \frac{\|(z I-2^{-n}A_n)\Psi^N_k  c\|_{q, (w^r)_n}}{ ((\alpha^r)_k)^{1/p} }
\nonumber\\
& \le & \frac{\|\Psi^N_k (z I-2^{-nd}A_n) c\|_{p, (w^r)_n}}{ ((\alpha^r)_k)^{1/p} }
\nonumber\\
&& \quad + 2^{-nd} \sum_{k'\in N\Zd} \frac{ \|(\Psi^N_k A_n-A_n \Psi_k^N)\Psi_{k'}^N \Phi_N \Psi_{k'}^N c\|_{p, (w^r)_n}}{ ((\alpha^r)_k)^{1/p} }
 \nonumber\\
& \le & \frac{\|\Psi^N_k (z I-2^{-nd}A_n) c\|_{p, (w^r)_n}}{ ((\alpha^r)_k)^{1/p} }
+  C_3 (A_p(w^r))^{1/p} \nonumber\\
&& \quad \times \sum_{|k'-k|\le 8N\atop k'\in N\Zd} \Big( N^{-1/2}\|r_K\|_1+ \int_{|t|\ge \sqrt{N}/4} r_K(t) dt\Big)
\frac{\| \Phi_N \Psi_{k'}^N c\|_{p, (w^r)_n}}{ ((\alpha^r)_k)^{1/p} }\nonumber\\
 & & + C_3 (A_p(w^r))^{1/p}  N^d
\sum_{|k-k'|>8N\atop k'\in N\Zd}  r_K((k-k')/2)  \frac{\| \Phi_N \Psi_{k'}^N c\|_{q, (w^r)_n}}{ ((\alpha^r)_{k'})^{1/p} }
 \end{eqnarray*}
for any bounded sequence $c$, where $C_3$ is an absolute constant depending on $p$ and $d$ only.
 Thus
  \begin{eqnarray}\label{liostability.lem1.eq11}
 \frac{\|\Psi^N_kc\|_{p, (w^r)_n}}{ ((\alpha^r)_k)^{1/p} }
& \le &  C_4 \frac{\|\Psi^N_k (z I-2^{-nd}A_n) c\|_{p, (w^r)_n}}{  ((\alpha^r)_k)^{1/p} }\nonumber\\
& &  + C_4
(A_p(w))^{1/p} \sum_{k'\in N\Zd}  g_N(k-k')  \frac{\|\Psi^N_{k'}c\|_{p, (w^r)_n}}{ ((\alpha^r)_{k'})^{1/p} } \end{eqnarray}
for any bounded sequence $c$,
where  $C_4$ is an absolute constant depending on $p$ and $d$ only, and
the  sequence $(g_N(k))_{k\in N\Zd}$ is defined by
\begin{eqnarray*} g_N(k) & = &
 \Big( N^{-1/2}\|r_K\|_1+ \int_{|t|\ge \sqrt{N}/4} r_K(t) dt\Big)\chi_{[-8N, 8N]^d}(k)
\nonumber\\
&  & +  N^d
 r_K(k/2) \chi_{N\Zd\backslash [-8N, 8N]^d}(k), \ k\in N\Zd.
 \end{eqnarray*}

 Let
 ${\mathcal B}$ contain all sequences $a:=(a(k))_{k\in \Zd}$ with $\|a\|_{\mathcal B}:=\sum_{m\in \Zd} \sup_{|k|\ge |m|}|a(k)|<\infty$
 (\cite{beurling49}),
 and denote by $a*b$ the convolution of two summable sequences $a$ and $b$ on $\Zd$.
 Recall that
 there exists a positive constant $D$ such that  $\|a*b\|_{\mathcal B}\le D \|a\|_{\mathcal B}\|b\|_{\mathcal B}$ for all $a, b\in {\mathcal B}$ \cite{baskakov11, belinskii97, beurling49, sunconst10}. Then $({\mathcal B}, \|\cdot\|_{\mathcal B}/D)$ is a Banach algebra under convolution.
Note that $(g_N(Nk))_{k\in \Zd}$ is a radially decreasing sequence, we then have
$$\|(g_N(Nk))_{k\in \Zd}\|_{\mathcal B}=\sum_{k\in N\Zd} g_N(k)\le C_5 \big(N^{-1/2}\|r_K\|_1+ \int_{|t|\ge \sqrt{N}/4} r_K(t) dt\big)\to 0$$
 as $N\to \infty$, where $C_5$ is an absolute constant depending on $p$ and $d$.
Now we select a sufficiently large integer $N$ so that
$$ C_4 C_5 (A_p(w))^{1/p} \Big(N^{-1/2}\|r_K\|_1+ \int_{|t|\ge \sqrt{N}/4} r_K(t) dt\Big)<\frac{1}{2D}.$$
Applying
\eqref{liostability.lem1.eq11} iteratively  and using the Banach algebra property  for ${\mathcal B}$,
we obtain that
 \begin{equation}\label{liostability.lem1.eq14}
 \frac{\|\Psi^{N}_kc\|_{p, (w^r)_n}}{ ((\alpha^r)_k)^{1/p} }
 \le 
  C_4 \sum_{k'\in N\Zd} V(k-k')  \frac{\|\Psi^{N}_{k'} (z I-2^{-nd}A_n) c\|_{p, (w^r)_n}}{ ((\alpha^r)_{k'})^{1/p} }
  \end{equation}
hold  for all bounded sequence $c$,
 where
\begin{equation}  V(k)=\delta(k)+\sum_{l=1}^\infty (C_4
(A_p(w))^{1/p})^l \underbrace{g_{N}*\cdots*g_{N}}_{l \ {\rm times}} (k)
\end{equation}
and $\delta(0)=1$ and $\delta(k)=0$ for all nonzero integer $k\in N\Zd$.
One may verify that
 \begin{equation} \label{liostability.lem1.eq15}
 \sum_{m\in N\Zd} \sup_{|l|\ge |m|} V(l)<\infty.
 \end{equation}

Set
 $
 Q_\lambda=\lambda+2^{-n}[-1/2,1/2)^d, \lambda\in 2^{-n}\Zd$ and
 $
 L_k=k+[-2N-2^{-n-1}, 2N+2^{-n-1})^d, k\in N\Zd$.
Then applying \eqref{apweight.eq5} with replacing $Q$ by
$L_k$ and
 $f$ by the characteristic function on $Q_\lambda$ and $w$ by $w^r$,  we have
\begin{eqnarray} \label{liostability.lem1.eq17}
 1 &\ge  &  \frac{\int_{Q_\lambda} w(x)^r dx}
 {\int_{L_k} w(x)^r dx}
 \ge   (A_p(w))^{-1} 2^{-ndp}(4N+1)^{-dp}
\end{eqnarray}
 for all $\lambda\in 2^{-n}\Zd$ and $k\in N\Zd$ with $|\lambda-k| \le 2N$.
For  $ k\in N\Zd$ and $c\in \ell^p_{w^{r(1+s)}_n}\cap \ell^\infty$, we obtain from
 \eqref{liostability.lem1.eq14}, \eqref{liostability.lem1.eq17}  and Proposition \ref{apweight.prop3} that
\begin{eqnarray}\label{liostability.lem1.eq18}
 & & \frac{\|\Psi^{N}_kc\|_{p, (w^{r(1+s)})_n}}{ ((\alpha^{r(1+s)})_k)^{1/p} }
 \nonumber\\
   & \le  & C(A_p(w))^{\frac{1+s}{p}} 2^{nds} \frac{\|\Psi^{N}_kc\|_{p, (w^{r})_n}}{ ((\alpha^{r})_k)^{1/p} }\nonumber\\
  & \le &   C(A_p(w))^{\frac{1+s}{p}} 2^{nds}
   \sum_{k'\in N\Zd}  V(k-k')  \frac{\|\Psi^{N}_{k'} (z I-2^{-nd}A_n)c\|_{p, (w^{r})_n}}{ ((\alpha^{r})_{k'})^{1/p} }
      \nonumber\\
  & \le & C  (A_p(w))^{3/p}
   2^{2nds} \sum_{k'\in N\Zd}  V(k-k')  \frac{\|\Psi^{N}_{k'} (z I-2^{-nd}A_n)c\|_{p, (w^{r(1+s)})_n}}{ ((\alpha^{r(1+s)})_{k'})^{1/p} }
\end{eqnarray}
for all $s\in [0, \delta_0]$, where $C$ is an absolute constant.
Similarly for all  $s\in [-\delta_0, 0]$ we have
\begin{eqnarray}\label{liostability.lem1.eq18+}
  \frac{\|\Psi^{N}_kc\|_{p, (w^{r(1+s)})_n}}{ ((\alpha^{r(1+s)})_k)^{1/p} }
  & \le  &  C  (A_p(w))^{3/p}
   2^{2nd |s|} \nonumber\\
   & & \times  \sum_{k'\in N\Zd}  V(k-k') \frac{\|\Psi^{N}_{k'} (zI-2^{-n}A_n)c\|_{p, (w^{r(1+s)})_n}}{ ((\alpha^{r(1+s)})_{k'})^{1/p} },
   \end{eqnarray}
where $ k\in N\Zd$ and $c\in \ell^p_{w^{r(1+s)}_n}\cap \ell^\infty$.
By Proposition \ref{discreteapweight.prop1} with $w$ replaced by $w^{r(1+s)}$,
$v_{N}= ((\alpha^{r(1+s)})_k)_{k\in N\Zd}$
is a discrete $A_p$-weight  with
$A_p(v_{N})\le  A_p(w^{r(1+s)})\le  A_p(w)$.
This, together with  \eqref{liostability.lem1.eq15}, \eqref{liostability.lem1.eq18}, \eqref{liostability.lem1.eq18+} and
 Lemma \ref{imboundedness.lem}, implies that
 \begin{eqnarray*}\label{liostability.lem1.eq20}
  \|c\|_{p, (w^{r(1+s)})_n}
& \le & \Big( \sum_{k\in N\Zd}\Big(  \frac{\|\Psi^{N}_kc\|_{p, (w^{r(1+s)})_n}}{ ((\alpha^{r(1+s)})_k)^{1/p} }\Big)^p
 (\alpha^{r(1+s)})_k\Big)^{1/p}\nonumber\\
  & \le  &  C_6 2^{2nd|s|}  \|(zI-2^{-nd}A_n)c\|_{p, (w^{r(1+s)})_n}
 \end{eqnarray*}
 for  all $c\in \ell^p_{(w^{r(1+s)})_n}\cap \ell^\infty$ and $n\ge n_0$,
 where  $C_6$ is an absolute constant independent of $n\ge n_0$ and $r\in (0,1]$ and $s\in [-\delta_0, \delta_0]$.
 Then  Claim 1 follows and Lemma \ref{liostability.lem1} is proved.

\subsection{Proof of Lemma \ref{liostability.lem2}}
 Let $zI-T$ have the $L^p_{w^r}$-stability. From  the argument used in the proof of Lemma
\ref{liostability.lem1},
there exist a sufficiently large integer $N$ and a sequence $V$  satisfying \eqref{liostability.lem1.eq15}
such that
\begin{equation}\label{liostability.lem2.pf.eq1}
 \frac{\|\Psi^{N}_kc\|_{p, (w^r)_n}}{ ((\alpha^r)_k)^{1/p} }
 \le 
  C_3 \sum_{k'\in N\Zd} V(k-k')  \frac{\|\Psi^{N}_{k'} (z I-2^{-nd}A_n) c\|_{p, (w^r)_n}}{ ((\alpha^r)_{k'})^{1/p} }
  \end{equation}
hold  for all bounded sequence $c$ and $k\in N\Zd$, where  $\Psi_k^N$ and $(\alpha^r)_k, k\in N\Zd$ are given in
\eqref{lio.eq6} and \eqref{liostability.lem1.eq10} respectively.
Note that $ L_k\subseteq 2^{n+5}   N Q_\lambda$ and $ 2^{n+1} NQ_\lambda\subset 2L_k$ when  $k\in N\Zd$ and $\lambda\in 2^{-n}\Zd$ with $|\lambda-k|\le 2N$.
Then by Proposition \ref{apweight.prop2}
\begin{equation}\label{liostability.lem2.pf.eq2}
C_1 2^{-d(p-1)rn} \le  \frac { 2^{nd}  \int_{\lambda+2^{-n}[-1/2, 1/2]^d} w(x)^r dx }{
\int_{[-2N-2^{-n-1}, 2N+2^{-n-1}]} w(x)^r dx}\le C_2 (2^{p} (A_p(w))^{2})^{(2^d+1)rn} \quad
\end{equation}
for all $r\in [0, \delta_1], k\in N\Zd$ and $\lambda\in 2^{-n}\Zd$ with $|\lambda-k|\le 2N $, where $C_1$ and $C_2$ are absolute constants.
Therefore for $r'\in [0, \delta_1],  k\in N\Zd$ and $\lambda\in 2^{-n}\Zd$ with $|\lambda-k|\le 2N$, we get from \eqref{liostability.lem2.pf.eq1} and \eqref{liostability.lem2.pf.eq2} that
\begin{eqnarray*} 
 \frac{\|\Psi^N_kc\|_{p, (w^{r'})_n}}{ ((\alpha^{r'})_k)^{1/p} }  & \le &  C
 2^{-nd/p}  (2^{p} (A_p(w))^{2})^{(2^d+1)r'n/p} \|\Psi^N_k c\|_p\nonumber \\
 & \le & C    (2^{p} (A_p(w))^{2})^{(2^d+1)r'n/p} 2^{d(p-1)rn/p}
 \frac{\|\Psi^N_kc\|_{p, (w^{r})_n}}{ ((\alpha^{r})_k)^{1/p} }\nonumber \\
 &\le  &
 C    (2^{p} (A_p(w))^{2})^{(2^d+1)r'n/p} 2^{d(p-1)rn/p}
 \sum_{k'\in N\Zd} V(k-k')\nonumber\\
   & &\times \frac{\|\Psi^N_{k'} (zI-2^{-nd}A_n) c\|_{p, (w^{r})_n}}{ ((\alpha^{r})_{k'})^{1/p} }\nonumber\\
  &\le  &
 C    (2^{p} (A_p(w))^{2})^{(2^d+1)(r+r') n/p} 2^{d(p-1)(r+r')n/p}
 \sum_{k'\in N\Zd} V(k-k')\nonumber\\
   & &\times \frac{\|\Psi^N_{k'} (zI-2^{-nd}A_n) c\|_{p, (w^{r'})_n}}{ ((\alpha^{r'})_{k'})^{1/p} }.
\end{eqnarray*}
This together with \eqref{liostability.lem1.eq15} and
Lemma \ref{imboundedness.lem} implies that
\begin{equation} \label{liostability.lem2.pf.pf.eq3}
\|c\|_{p, (w^{r'}) _n}\le
 C    (2^{p} (A_p(w))^{2})^{(2^d+1)(r+r') n/p} 2^{d(p-1)(r+r')n/p}
 \| (zI-2^{-n}A_n)c\|_{p, (w^{r'}) _n}
 \end{equation}
 for all bounded sequences $c$ in $\ell^p_{(w^{r'}) _n}$.
Therefore the desired $L^p_{w^{r'}}$-stability for the operator $zI-T$ follows by using
 the argument to establish \eqref{discretetocontinuouspf}
with applying \eqref{liostability.lem2.pf.pf.eq3} instead of
\eqref{liostability.lem1.eq888}.

\subsection{Proof of Lemma \ref{liostability.lem3}}
Let $zI-T$ has the $L^p$-stability. Similar to the argument to establish  \eqref{liostability.lem1.eq14},
there exist a sufficiently large integer $N$ and  a  sequence $V=(V(k))_{k\in N\Zd}$
satisfying  \eqref{liostability.lem1.eq15}
such that
\begin{equation}\label{liostability.lem3.pf.eq4}
\|\Psi_k^N c\|_p\le C \sum_{k'\in N\Zd} V(k-k') \|\Psi_k^N (zI-2^{-n}A_n) c\|_p
\end{equation}
for all bounded sequence $c$. Note that for $1\le q_1,  q_2<\infty$,
\begin{eqnarray}\label{liostability.lem3.pf.eq5}
(2^{d(n+2)}N^d)^{-\max(1/q_2-1/q_1, 0)} \|\Psi_k^Nc\|_{q_2} &\le & \|\Psi_k^N c\|_{q_1}\nonumber\\
&\le &  (2^{n+2}N)^{\max(1/q_1-1/q_2,0)} \|\Psi_k^Nc\|_{q_2}.
\end{eqnarray}
Combining \eqref{liostability.lem3.pf.eq4} and \eqref{liostability.lem3.pf.eq5} leads to
\begin{equation*}
\|\Psi_k^N c\|_{p(1+s)} 
  \le  C 2^{2nd |s|} \sum_{k'\in N\Zd} V(k-k') \|\Psi_k^N (zI-2^{-nd}A_n)c\|_{p(1+s)}
\end{equation*}
for all  bounded sequences $c$ and $s\in [-\delta_2, \delta_2]$.
Hence
\begin{equation}\label{liostability.lem3.pf.eq6}
\|c\|_{p(1+s)} 
  \le  C 2^{2nd |s|} \|(zI-2^{-nd}A_n)c\|_{p(1+s)}
\end{equation}
for all $c\in \ell^{p(1+s)}$.
Therefore the desired $L^{p(1+s)}$-stability of the operator $zI-T$ follows by using
 the argument to establish \eqref{discretetocontinuouspf}
with applying \eqref{liostability.lem3.pf.eq6} instead of
\eqref{liostability.lem1.eq888}.

\begin{appendix}

\section{Doubling property and reverse H\"older inequality for Muckenhoupt Weights}

In this appendix, we  provide some refinements of doubling property and reverse H\"older inequality for Muckenhoupt $A_p$-weights.
 Those refinements are important for the validation of the bootstrap technique used in the proof of Theorem
 \ref{liostability.tm}.

\subsection{Doubling property of Muckenhoupt $A_p$-weights}
An alternative way of defining Muckenhoupt $A_p$-weights is
\begin{equation}\label{apweight.eq5}
\Big(\frac{1}{|Q|} \int_{Q} |f(x)| dx\Big)^p\le \frac{A}{\int_Q w(x) dx}\int_Q |f(x)|^p w(x)dx \end{equation}
for all locally integrable functions $f$ and cubes $Q\subset \Rd$. The smallest constant $A$ for which \eqref{apweight.eq5} holds is the same as the $A_p$-bound $A_p(w), 1\le p<\infty$.
Applying \eqref{apweight.eq5} with $Q$ replaced by $2^nQ$ and
 $f$ by the characteristic  function on $Q$ gives that $wdx$ (or  $w$ for short) is a doubling measure; i.e.,
\begin{equation}\label{apweight.eq6}
\frac{1}{|2^nQ|} \int_{2^nQ} w(x)dx\le   2^{nd(p-1)} A_p(w)\Big(\frac{1}{|Q|}\int_Q w(x) dx \Big)
\end{equation}
for all positive integers $n$ and cubes $Q$ \cite{fourieranalysisbook, rubiobook}.
In  this subsection, we consider the doubling measure property of weights $w^r$ with sufficiently small $r>0$.

\begin{prop}
\label{apweight.prop2}
Let $1\le p<\infty$ and $w$ be an $A_p$-weight.
Then there exist absolute constants $C_0$ and $D_1$ (that depend on $p$ and $d$ only) such that
\begin{equation}\label{apweight.prop2.eq1}
(A_p(w))^{-r} 2^{-rnd(p-1)}  \le
\frac{\frac{1}{|Q|} \int_{Q} (w(x))^r dx}{\frac{1}{|2^n Q|}
\int_{2^n Q} (w(x))^r dx}
  \le   C_0 \big(2^{p}(A_p(w))^{2}\big)^{ (2^d+1)  rn}
\end{equation}
for all  integers $n\in \NN$, cubes $Q$ and  numbers $r\in [0,  D_1/(p\ln 2+2\ln A_p(w))]$.
\end{prop}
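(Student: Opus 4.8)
The plan is to bootstrap both inequalities in \eqref{apweight.prop2.eq1} from the analogous facts for the \emph{unpowered} weight $w$ — for which we already have the one-sided doubling estimate \eqref{apweight.eq6} together with the trivial comparison $\frac1{|Q|}\int_Q w\le 2^{nd}\,\frac1{|2^nQ|}\int_{2^nQ}w$ — and then transfer to $w^r$ by Jensen's inequality. Note first that the hypothesis on $r$ forces $0\le r\le 1$ once $D_1$ is taken $\le\ln 2$ (the denominator $p\ln 2+2\ln A_p(w)$ is always at least $\ln 2$), and that $r\le 1$ is the only property of $r$ that the argument below uses; the precise shape of the bound on $r$ is needed only so that the $A_p(w)$-prefactors occurring in \eqref{apweight.prop2.eq1} stay within an absolute constant when the proposition is fed into Lemma~\ref{liostability.lem2}. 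We may also assume $p>1$: the case $p=1$ is entirely parallel, using the $p=1$ forms of \eqref{apweight.eq5} and \eqref{apweight.eq6}.

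For the \textbf{lower bound} I would apply the characterization \eqref{apweight.eq5} on the cube $2^nQ$ with the tailored test function $f=w^{(r-1)/p}\chi_Q$, which is locally integrable since $(1-r)/p\le 1/(p-1)$ makes $w^{(r-1)/p}$ dominated by $1+w^{-1/(p-1)}$. As $|f|^pw=w^r$ on $Q$, \eqref{apweight.eq5} reads $\big(\frac1{|2^nQ|}\int_Q w^{(r-1)/p}\big)^p\le A_p(w)\,\big(\int_{2^nQ}w\big)^{-1}\int_Q w^r$. The key manipulation is to write $w^{(r-1)/p}=(w^r)^{(r-1)/(rp)}$ and apply Jensen to the convex map $t\mapsto t^{(r-1)/(rp)}$ (the exponent is negative), which bounds $\int_Q w^{(r-1)/p}$ below by $|Q|\big(\frac1{|Q|}\int_Q w^r\big)^{(r-1)/(rp)}$. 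Substituting this, a short computation in which the powers of $\frac1{|Q|}\int_Q w^r$ recombine (since $1+\tfrac{1-r}{r}=\tfrac1r$) yields
\[
\frac1{|2^nQ|}\int_{2^nQ}w\le 2^{nd(p-1)}A_p(w)\,\Big(\frac1{|Q|}\int_Q w^r\Big)^{1/r}.
\]
Raising to the $r$-th power and using Jensen ($t\mapsto t^r$ concave) in the form $\frac1{|2^nQ|}\int_{2^nQ}w^r\le\big(\frac1{|2^nQ|}\int_{2^nQ}w\big)^r$ gives $\frac1{|2^nQ|}\int_{2^nQ}w^r\le\big(2^{nd(p-1)}A_p(w)\big)^r\frac1{|Q|}\int_Q w^r$, which rearranges to exactly the claimed lower bound in \eqref{apweight.prop2.eq1}.

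For the \textbf{upper bound} I would first use Jensen in the numerator: $\frac1{|Q|}\int_Q w^r\le\big(\frac1{|Q|}\int_Q w\big)^r$. For the denominator one needs a reverse-Jensen estimate, and this follows from \eqref{apweight.eq5} by the same device applied now on the full cube $2^nQ$ with $f=w^{(r-1)/p}\chi_{2^nQ}$ and Jensen for $t\mapsto t^{(r-1)/(rp)}$: after recombining exponents one obtains $\frac1{|2^nQ|}\int_{2^nQ}w^r\ge (A_p(w))^{-r}\big(\frac1{|2^nQ|}\int_{2^nQ}w\big)^r$. Combining the two estimates with the trivial comparison $\frac1{|Q|}\int_Q w\le 2^{nd}\,\frac1{|2^nQ|}\int_{2^nQ}w$ shows the ratio in \eqref{apweight.prop2.eq1} is at most $(A_p(w))^r\,2^{nrd}$; since $n\ge 1$ and $A_p(w)\ge 1$ this is $\le(2^dA_p(w))^{rn}$, and a crude comparison of bases — $2^dA_p(w)\le(2^p(A_p(w))^2)^{2^d+1}$ because $p(2^d+1)\ge d$ and $2(2^d+1)\ge 1$ — shows it is at most $(2^p(A_p(w))^2)^{(2^d+1)rn}$, so the upper bound holds with $C_0=1$. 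The delicate point is entirely the bookkeeping in the lower bound: applying \eqref{apweight.eq6} directly to $w^r$, or using reverse Jensen on $Q$ as well as on $2^nQ$, produces the weaker factor $2^{-nd(p-1)}$ (or an extra power of $A_p(w)$); it is the choice to write $w^{(r-1)/p}$ as a negative power of $w^r$ — rather than of $w$ — inside \eqref{apweight.eq5} that forces both the dilation factor and $A_p(w)$ to enter only to the exponent $r$.
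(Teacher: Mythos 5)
Your proposal is correct, and both halves check out: the lower bound you derive is exactly the estimate the paper gets, and your upper bound is in fact stronger than the stated one. The lower-bound argument is morally the paper's: the paper simply quotes that $w^r\in {\mathcal A}_{1+r(p-1)}$ with $A_{1+r(p-1)}(w^r)\le (A_p(w))^r$ and applies the doubling inequality \eqref{apweight.eq6} at exponent $1+r(p-1)$, whereas you re-derive the same doubling estimate for $w^r$ directly from \eqref{apweight.eq5} with the test function $f=w^{(r-1)/p}\chi_Q$ and Jensen for the convex map $t\mapsto t^{(r-1)/(rp)}$; the two computations are equivalent in content. The upper bound is where you genuinely diverge from the paper. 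The paper proves it through Lemma \ref{apweight.bmoprop} (the bound $\|\ln w\|_{\rm BMO}\le p\ln 2+2\ln A_p(w)$), the John--Nirenberg-based comparability $\frac{1}{|Q|}\int_Q w^r\approx \exp\big(\frac{r}{|Q|}\int_Q\ln w\big)$ of Lemma \ref{apweight.lem2} --- which is precisely where the restriction $r\le D_1/(p\ln 2+2\ln A_p(w))$ is actually used --- and then a telescoping of the averages of $\ln w$ over the dyadic dilations $2^kQ$, $k=0,\dots,n$, which produces the constant $C_0\big(2^p(A_p(w))^2\big)^{(2^d+1)rn}$. Your route (Jensen upward on the numerator, the $A_\infty$-type reverse-Jensen estimate $\frac{1}{|R|}\int_R w^r\ge (A_p(w))^{-r}\big(\frac1{|R|}\int_R w\big)^r$ obtained from \eqref{apweight.eq5} on $R=2^nQ$, and the trivial monotonicity $\int_Q w\le\int_{2^nQ}w$) is elementary, needs only $0\le r\le 1$, bypasses Lemmas \ref{apweight.bmoprop} and \ref{apweight.lem2} entirely, and yields the sharper bound $(A_p(w))^r2^{rnd}$, which you correctly dominate by the stated constant for $n\ge 1$ using $A_p(w)\ge 1$; since the proposition is existential in $D_1$, fixing $D_1\le\ln 2$ so that the hypothesis forces $r\le 1$ is legitimate, and your remark that the specific shape of the admissible range of $r$ only matters for the bookkeeping in Lemma \ref{liostability.lem2} is accurate. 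What the paper's BMO/John--Nirenberg route buys is Lemma \ref{apweight.lem2} itself (a two-sided comparison with the geometric mean, of independent use), while your route buys a simpler proof of this proposition, valid on the full range $r\in[0,1]$, with a smaller constant, so all downstream uses are unaffected.
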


We say that a locally integrable function $f$ has {\em bounded mean oscillation}, or BMO for short,
if $\|f\|_{\rm BMO}:=\sup_{{\rm cubes} \ Q}\frac{1}{|Q|} \int_{Q} \big|f(x)-\frac{1}{|Q|} \int_Q f(y) dy\big| \ dx <\infty$.
To prove Proposition \ref{apweight.prop2}, we  recall that $\ln w$ has  bounded mean oscillation whenever $w$ is an $A_p$-weight for some $1\le p<\infty$
\cite{fourieranalysisbook, steinbook93}.

\begin{lem}\label{apweight.bmoprop} Let $1\le p<\infty$ and $w\in {\mathcal A}_p$. Then
 $\ln w$ has bounded mean oscillation and
\begin{equation} \label{apweight.bmoprop.eq1}
\|\ln w\|_{\rm BMO} \le  p
\ln 2+ 2\ln  A_p(w).
\end{equation}
\end{lem}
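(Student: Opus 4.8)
The plan is to prove, with explicit constants, the two ingredients behind the classical fact that $w\in A_p$ implies $\ln w\in{\rm BMO}$. Fix a cube $Q$, write $A=A_p(w)$ and $\langle g\rangle_Q=\frac1{|Q|}\int_Q g$. Two standard reductions simplify matters. First, $\|\ln w\|_{\rm BMO}=\sup_Q\frac1{|Q|}\int_Q|\ln w-\langle\ln w\rangle_Q|$, and since $\ln w-\langle\ln w\rangle_Q$ has mean zero over $Q$, $\frac1{|Q|}\int_Q|\ln w-\langle\ln w\rangle_Q|=\frac2{|Q|}\int_Q(\ln w-\langle\ln w\rangle_Q)^+$. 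Second, replacing $w$ by $w/\langle w\rangle_Q$ changes neither $A_p(w)$ nor $\|\ln w\|_{\rm BMO}$, so we may assume $\langle w\rangle_Q=1$; then concavity of $\ln$ gives $m_Q:=\langle\ln w\rangle_Q\le\ln\langle w\rangle_Q=0$.

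Next I would locate $m_Q$ from below. For $1<p<\infty$, Jensen applied to $w^{-1/(p-1)}$ gives $\langle w^{-1/(p-1)}\rangle_Q\ge e^{-m_Q/(p-1)}$; combined with \eqref{apweight.eq1} and $\langle w\rangle_Q=1$ this yields $e^{-m_Q}\le\langle w^{-1/(p-1)}\rangle_Q^{\,p-1}\le A$, i.e.\ $m_Q\ge-\ln A$. For $p=1$, \eqref{apweight.eq2} with $\langle w\rangle_Q=1$ gives $\sinf_Q w\ge 1/A$, so $\ln w\ge-\ln A$ a.e.\ on $Q$, whence $m_Q\ge-\ln A$. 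Thus $-\ln A\le m_Q\le0$. Now bring in the distribution function: from $\langle w\rangle_Q=1$ and Chebyshev, $|\{x\in Q:\ln w(x)>\lambda\}|\le e^{-\lambda}|Q|$ for every $\lambda\in\RR$. Writing $\frac2{|Q|}\int_Q(\ln w-m_Q)^+=\frac2{|Q|}\int_0^\infty|\{\ln w>m_Q+\lambda\}|\,d\lambda$, I split at $\lambda=-m_Q$: for $0\le\lambda\le-m_Q$ bound the measure by $|Q|$, contributing $2(-m_Q)\le2\ln A$; for $\lambda>-m_Q$ the threshold $m_Q+\lambda$ is $\ge0$, so Chebyshev gives the exponential bound and the contribution is $2\int_{-m_Q}^\infty e^{-(m_Q+\lambda)}\,d\lambda=2$. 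This already yields $\|\ln w\|_{\rm BMO}\le2\ln A+2$ for $1<p<\infty$; for $p=1$, where $\ln w+\ln A\ge0$ a.e.\ and $m_Q\ge-\ln A$, one has $(\ln w-m_Q)^+\le\ln w+\ln A$, so $\frac2{|Q|}\int_Q(\ln w-m_Q)^+\le2(m_Q+\ln A)\le2\ln A$, i.e.\ $\|\ln w\|_{\rm BMO}\le2\ln A_1(w)$.

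To match the normalisation $p\ln2+2\ln A_p(w)$ in the statement for $1<p<\infty$ one sharpens the tail estimates: applying \eqref{apweight.eq5} to $f=\chi_E$ with $E\subset Q$ gives the quantitative $A_p\Rightarrow A_\infty$ bound $w(E)/w(Q)\ge A^{-1}(|E|/|Q|)^p$, hence $|\{\ln w<-\lambda\}|\le(Ae^{-\lambda})^{1/(p-1)}|Q|$ together with a correspondingly sharpened control of the upper tail of $w$; running the same one–dimensional integration with these tails recovers a constant of the asserted type. The one genuinely nontrivial point — and the only place $p$ enters the final constant — is this lower-tail control for $1<p<\infty$: an $A_p$ weight need not be bounded away from $0$ on $Q$, so the transparent $p=1$ argument is unavailable and one must trade the $A_p$ inequality for the integrability of $w^{-1/(p-1)}$ (equivalently, a quantitative reverse-H\"older / $A_\infty$ statement). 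Everything else is Jensen, Chebyshev, and an elementary integral.
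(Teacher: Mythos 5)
Your first computation is correct as far as it goes, but it proves a different inequality from the one asserted. The layer-cake argument with the upper tail $|\{\ln w>\lambda\}|\le e^{-\lambda}|Q|$ and the bound $m_Q\ge -\ln A$ yields $\|\ln w\|_{\rm BMO}\le 2\ln A_p(w)+2$, and the additive constant $2$ exceeds $p\ln 2$ for every $p<2/\ln 2\approx 2.88$, so the stated bound $p\ln 2+2\ln A_p(w)$ is not established in that range (your $p=1$ case is fine, indeed sharper than claimed). The deferred ``sharpening'' does not close this gap: carrying out exactly the program you describe --- the lower tail $|\{\ln w<m_Q-\lambda\}|\le (Ae^{-\lambda})^{1/(p-1)}|Q|$ (which follows already from Chebyshev applied to $w^{-1/(p-1)}$) together with the upper tail --- gives $\langle(\ln w-m_Q)^+\rangle_Q\le \ln A+1$ and $\langle(m_Q-\ln w)^+\rangle_Q\le \ln A+(p-1)$, hence at best $2\ln A+2\min(1,p-1)$ after using that the two positive parts have equal integrals; for $p=2$, say, this is $2\ln A+2>2\ln 2+2\ln A$. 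The distribution-function route inherently produces the additive constants $1$ and $p-1$ where the asserted constant has $\ln 2$ and $(p-1)\ln 2$, so ``a constant of the asserted type'' is not the asserted constant. Since the exact expression $p\ln 2+2\ln A_p(w)$ is reused verbatim in Proposition \ref{apweight.prop2} and in the definition of $\delta_1$ in Lemma \ref{liostability.lem2}, this is a genuine gap rather than a cosmetic one.

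The missing idea (and the paper's route) is to work with exponential moments centered at $c_Q=\frac{1}{|Q|}\int_Q\ln w$ instead of distribution functions. Writing the $A_p$ condition as $\big(\langle e^{\ln w-c_Q}\rangle_Q\big)\big(\langle e^{-(\ln w-c_Q)/(p-1)}\rangle_Q\big)^{p-1}\le A$ and observing that each factor is $\ge 1$ by Jensen, one gets the individual bounds $\langle e^{\ln w-c_Q}\rangle_Q\le A$ and $\langle e^{-(\ln w-c_Q)/(p-1)}\rangle_Q\le A^{1/(p-1)}$. Applying Jensen to $(\ln w-c_Q)^+$ and to $(c_Q-\ln w)^+/(p-1)$, and using $e^{\max(f,0)}\le e^{f}+1$ together with $A\ge 1$, yields $\langle(\ln w-c_Q)^+\rangle_Q\le\ln(2A)$ and $\langle(c_Q-\ln w)^+\rangle_Q\le(p-1)\ln\big(2A^{1/(p-1)}\big)$, whose sum is exactly $p\ln 2+2\ln A$. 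Your normalization $\langle w\rangle_Q=1$ would also deliver these two exponential-moment bounds (since $e^{-c_Q}\le A$ and $c_Q\le 0$), so the repair is short; but as written, your proof establishes BMO membership with a weaker explicit constant, not the inequality of the lemma.
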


\begin{proof}  We follow the arguments in \cite[p. 151]{fourieranalysisbook} and \cite[p.197]{steinbook93}, and include a proof
for the BMO bound estimate in \eqref{apweight.bmoprop.eq1}  that will be used for our establishment of  Proposition \ref{apweight.prop2}.
Let $w$ be an $A_p$-weight with $1<p<\infty$.
Take an arbitrary cube $Q\subset \Rd$ and denote by $c_Q:=\frac{1}{|Q|} \int_Q \ln w(y) dy$  the average of the function $\ln w$ on the cube $Q$.
As $w$ is an $A_p$-weight,
\begin{equation}\label{apweight.bmoprop.pf.eq1}
 \Big(\frac{1}{|Q|} \int_Q e^{\ln w(x)-c_Q} dx\Big)
\Big(\frac{1}{|Q|} \int_Q e^{-(\ln w(x)-c_Q)/(p-1)} dx\Big)^{p-1}\le A_p(w).
\end{equation}
Note that \begin{equation}\label{apweight.bmoprop.pf.eq3}
 \Big(\frac{1}{|Q|} \int_Q e^{\ln w(x)-c_Q} dx\Big)\ge 1\quad {\rm and}\quad \Big(\frac{1}{|Q|} \int_Q e^{-(\ln w(x)-c_Q)/(p-1)} dx\Big)\ge 1
%
\end{equation}
by applying  Jensen's inequality 
  \begin{equation}\label{apweight.bmoprop.pf.eq2} \exp\Big(\frac{1}{|Q|}\int_Q f(x) dx\Big)\le \frac{1}{|Q|} \int_{Q}  e^{f(x)} dx\end{equation}
  with $f$ replaced by
$(\ln w(x) -c_Q)$
and $-(\ln w(x) -c_Q)/(p-1)$ respectively.
 Thus combining \eqref{apweight.bmoprop.pf.eq1} and \eqref{apweight.bmoprop.pf.eq3}, 
we have
 \begin{equation} \label{apweight.bmoprop.pf.eq4}
 \frac{1}{|Q|} \int_Q e^{\ln w(x)-c_Q} dx\le A_p(w) \quad {\rm  and} \quad  \frac{1}{|Q|} \int_Q e^{-(\ln w(x)-c_Q)/(p-1)} dx\le (A_p(w))^{1/(p-1)}.\end{equation}
Using the estimates in \eqref{apweight.bmoprop.pf.eq4} and applying Jensen's inequality \eqref{apweight.bmoprop.pf.eq2} with $f$ replaced by
$\max( \ln w(x) -c_Q, 0)$ and $\max( c_Q-\ln w(x), 0)/(p-1)$ respectively, we get
\begin{eqnarray}\label{apweight.bmoprop.pf.eq5}
  & & \exp\Big(\frac{1}{|Q|}\int_{Q} \max( \ln w(x) -c_Q, 0) dx\Big)
   \le \frac{1}{|Q|}\int_Q  e^{\max(\ln w(x)-c_Q,0)} dx\nonumber\\
& \le &  \frac{1}{|Q|}\int_Q  e^{\ln w(x)-c_Q} dx+ \frac{1}{|Q|}\int_Q e^0 dx \le  A_p(w)+1\le 2 A_p(w)
\end{eqnarray}
and
\begin{eqnarray}\label{apweight.bmoprop.pf.eq6}
\quad  & & \exp\Big(\frac{1}{|Q|}\int_{Q}\frac{ \max(c_Q-\ln w(x), 0)}{p-1} dx \Big)
\le
\frac{1}{|Q|} \int_Q e^{ \max(c_Q-\ln w(x), 0)/(p-1)}dx\nonumber\\
\quad & \quad \le & \frac{1}{|Q|}\int_Q e^{(c_Q-\ln w(x))/(p-1)} dx+ \frac{1}{|Q|} \int_Q e^0 dx 
\le
2 (A_p(w))^{1/(p-1)}.
\end{eqnarray}
The desired BMO bound estimate \eqref{apweight.bmoprop.eq1}
then follows from \eqref{apweight.bmoprop.pf.eq5} and \eqref{apweight.bmoprop.pf.eq6}.

The desired conclusion \eqref{apweight.bmoprop.eq1} for $p=1$ follows from the established result for $1<p<\infty$ and the fact that any $A_1$-weight $w$ is an $A_p$-weight with $A_p(w)\le A_1(w)$ for all $1<p<\infty$.
\end{proof}

\begin{lem}\label{apweight.lem2} Let $1\le p<\infty$ and $w\in {\mathcal A}_p$. Then there exist absolute
positive constants $C$ and $D_1$ (that depend on $p$ and $d$ only) such that
\begin{equation}\label{apweight.lem2.eq1}
\exp\Big (\frac{r}{|Q|} \int_{Q} \ln w(x) dx\Big)\le
\frac{1}{|Q|} \int_{Q} (w(x))^r dx\le
C \exp \Big(\frac{r}{|Q|} \int_{Q} \ln w(x) dx\Big)
\end{equation}
hold for all cubes $Q$ and all $r\in [0, D_1/(p\ln 2+2\ln A_p(w))]$.
\end{lem}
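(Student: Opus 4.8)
The plan is to read \eqref{apweight.lem2.eq1} as the conjunction of two statements of very different natures: the left inequality is just Jensen's inequality and holds for all $r\ge 0$, while the right inequality is the John--Nirenberg inequality applied to the function $\ln w$, whose bounded mean oscillation and whose BMO norm have already been secured in Lemma \ref{apweight.bmoprop}. First I would dispose of the lower bound: applying Jensen's inequality \eqref{apweight.bmoprop.pf.eq2} with $f$ replaced by $r\ln w$ gives
\[
\exp\Big(\frac{r}{|Q|}\int_Q \ln w(x)\,dx\Big)\le \frac{1}{|Q|}\int_Q e^{r\ln w(x)}\,dx=\frac{1}{|Q|}\int_Q (w(x))^r\,dx
\]
for every cube $Q$ and every $r\ge 0$; no smallness of $r$ enters here.

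For the upper bound I would set $g:=\ln w$ and $c_Q:=\frac{1}{|Q|}\int_Q g(y)\,dy$, so that the claimed estimate is equivalent to $\frac{1}{|Q|}\int_Q e^{r(g(x)-c_Q)}\,dx\le C$. By Lemma \ref{apweight.bmoprop}, $g\in\mathrm{BMO}$ with $\|g\|_{\rm BMO}\le p\ln 2+2\ln A_p(w)$, and $c_Q$ is precisely the mean of $g$ over $Q$; the John--Nirenberg inequality \cite{fourieranalysisbook,steinbook93} then supplies constants $c_1,c_2>0$ depending only on $d$ with
\[
\frac{1}{|Q|}\big|\{x\in Q:\ |g(x)-c_Q|>\lambda\}\big|\le c_1 e^{-c_2\lambda/\|g\|_{\rm BMO}}\qquad(\lambda>0).
\]
Since $e^{r(g-c_Q)}\le e^{r|g-c_Q|}$ for $r\ge 0$, the layer-cake formula and this estimate give
\[
\frac{1}{|Q|}\int_Q e^{r(g(x)-c_Q)}\,dx\le 1+r\int_0^\infty e^{r\lambda}\,\frac{|\{x\in Q:\ |g(x)-c_Q|>\lambda\}|}{|Q|}\,d\lambda\le 1+c_1 r\int_0^\infty e^{(r-c_2/\|g\|_{\rm BMO})\lambda}\,d\lambda,
\]
and the last integral is finite as soon as $r\|g\|_{\rm BMO}<c_2$; if moreover $r\|g\|_{\rm BMO}\le c_2/2$ the whole right-hand side is at most $1+c_1=:C$, an absolute constant.

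It remains to fix $D_1$. Taking $D_1:=c_2/2$ and combining the hypothesis $0\le r\le D_1/(p\ln 2+2\ln A_p(w))$ with the BMO bound $\|g\|_{\rm BMO}\le p\ln 2+2\ln A_p(w)$ of \eqref{apweight.bmoprop.eq1} forces $r\|g\|_{\rm BMO}\le c_2/2$, which is exactly the regime covered above, and \eqref{apweight.lem2.eq1} follows with $C=1+c_1$ and $D_1=c_2/2$, both depending on $d$ (hence on $p$ and $d$) only. The one external ingredient beyond what is already in the excerpt is the John--Nirenberg inequality, and I expect that — rather than any of the surrounding bookkeeping — to be the substantive point: once it is in hand everything reduces to a one-line integration. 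The only subtlety worth double-checking is that the John--Nirenberg constants for cubes in $\Rd$ may indeed be taken to depend on $d$ alone, which is the standard formulation, so that the constants produced here are genuinely of the advertised type.
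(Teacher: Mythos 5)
Your proposal is correct and follows essentially the same route as the paper: Jensen's inequality for the lower bound, and the BMO bound of Lemma \ref{apweight.bmoprop} combined with the John--Nirenberg inequality and a layer-cake integration for the upper bound, with the smallness condition on $r$ calibrated so that $r\|\ln w\|_{\rm BMO}$ stays below the John--Nirenberg constant. The only cosmetic difference is that the paper handles $p=1$ by a separate direct $A_1$ argument, whereas your John--Nirenberg argument covers all $1\le p<\infty$ uniformly, which is equally valid since Lemma \ref{apweight.bmoprop} already includes $p=1$.
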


\begin{proof}
The first inequality in \eqref{apweight.lem2.eq1} follows by applying   Jensen's inequality \eqref{apweight.bmoprop.pf.eq2}
  with $f$ replaced by $r\ln w$.

For $p=1$ and $0<r\le D_1/\big(p\ln 2+2\ln A_p(w)\big)$,
\begin{eqnarray*} \frac{1}{|Q|} \int_{Q} (w(x))^r dx & \le &  \Big(\frac{1}{|Q|} \int_{Q} w(x) dx\Big)^r\le (A_1(w))^r \inf_{x\in Q} ( w(x))^r\nonumber\\
& \le &
e^{D_1/2} \exp \Big(\frac{r}{|Q|} \int_{Q} \ln w(x) dx\Big) \quad {\rm for\ all\ cubes} \ Q, \end{eqnarray*}
which leads to the second inequality in  \eqref{apweight.lem2.eq1} for $p=1$.
Now we prove the second inequality in \eqref{apweight.lem2.eq1} provided that $1<p<\infty$.
By  Lemma \ref{apweight.bmoprop} and  the  John-Nirenberg inequality for  functions with bounded mean oscillation, there exist  absolute positive
 constants $D_1$ and $D_2$
 such that
\begin{eqnarray*} \label{apweight.prop1.pf.eq10}
 |\{x\in Q: |\ln w(x)-c_Q|>\alpha\}|  &  \le   &  D_2\exp(- 2D_1 \alpha/\|\ln w\|_{\rm BMO}) |Q|\nonumber\\
& \le &  D_2 \exp\Big(-\frac{ 2D_1 \alpha }{p\ln 2+2\ln A_p(w)} \Big) |Q|
\end{eqnarray*}
for all  cubes $Q$,  where $\alpha>0$ and  $c_Q:=\frac{1}{|Q|} \int_Q \ln w(y) dy$ is  the average of the function $\ln w$ on the cube $Q$.
Therefore
\begin{eqnarray*}\label{apweight.prop1.pf.eq11}
& & \frac{1}{|Q|} \int_{Q} e^{r | \ln w(x)-c_Q|} dx
  =  1+ \frac{1}{|Q|}\int_0^\infty e^{t} |\{x\in Q: |\ln w(x)-c_Q|>t/r\}| d t
 \nonumber\\
 & \le &
 1+D_2\int_0^\infty \exp\Big(t - t \frac{2D_1}{  r (p\ln 2+2\ln A_p(w))}\Big) dt
\le   1+D_2 
\end{eqnarray*}
for all $r\in [0,D_1/\big(p\ln 2+2\ln A_p(w)\big)]$.
Thus
\begin{equation*}\label{apweight.prop1.pf.eq12}
\frac{1}{|Q|} \int_Q w(x)^r dx\le
\frac{ e^{r c_Q}}{|Q|} \int_Q e^{r|\ln w(x)- c_Q|} dx
\le (1+D_2) \exp\Big(  \frac{r}{|Q|} \int_Q \ln w(x) dx\Big)
\end{equation*}
and the second inequality in \eqref{apweight.lem2.eq1}   for $1<p<\infty$ follows.
\end{proof}

Now we prove Proposition \ref{apweight.prop2}.

\begin{proof}[Proof of Proposition \ref{apweight.prop2}] Let $1\le p<\infty$ and $w$ be an $A_p$-weight. Then for $0<r\le 1$,
$w^r\in {\mathcal A}_{1+r(p-1)}$  with its $A_{1+r(p-1)}$-bound dominated by $(A_p(w))^r$.
Then
applying \eqref{apweight.eq6} with $w$ replaced by $w^r$ and $p$ by $1+r(p-1)$, we obtain
$$ 
 \frac{\frac{1}{|Q|} \int_{Q} (w(x))^r dx}{\frac{1}{|2^n Q|}
\int_{2^nQ} (w(x))^r dx}
  \ge   \big(A_{1+r(p-1)}(w^r)\big)^{-1} 2^{-rnd(p-1)}
\ge (A_q(w))^{-r} 2^{-rnd(p-1)}
$$
for all positive integer $n$ and cubes $Q$.  This establishes
 the first inequality in \eqref{apweight.prop2.eq1}.

 By Lemmas \ref{apweight.bmoprop} and  \ref{apweight.lem2}, we get
\begin{eqnarray*}
& &\frac{\frac{1}{|Q|} \int_{Q} (w(x))^r dx}{\frac{1}{|2^nQ|}
\int_{2^nQ} (w(x))^r dx}
\le   C \exp \Big(r  \Big|\frac{1}{|2^nQ|}\int_{2^nQ} \ln w(x) dx-
\frac{1}{|Q|} \int_Q \ln w(x) dx\Big|\Big)\nonumber\\
 & \le & C \exp \Big(r  \sum_{k=0}^{n-1} \Big|\frac{1}{|2^{k+1}Q|}\int_{2^{k+1}Q} \ln w(x) dx-
\frac{1}{|2^kQ|} \int_{2^kQ} \ln w(x) dx\Big|\Big)\nonumber\\
 &\le  & C \exp \big((2^d+1) r n \|\ln w\|_{\rm BMO}\big)\le
C \exp \big((2^d+1) r n \big(p\ln 2+2\ln A_p(w)\big)\big).
\end{eqnarray*}
This proves   the second inequality in \eqref{apweight.prop2.eq1}.
\end{proof}

\subsection{Reverse H\"older inequality for Muckenhoupt $A_p$-weights}  One of key results for Muckenhoupt $A_p$-weights is the reverse H\"older inequality,
which states that for any $A_p$-weight $w, 1\le p<\infty$, there exist constants $C$ and $\epsilon>0$ (depending on $p, d$ and $A_p(w)$ only) such that
$\big(\frac{1}{|Q|} \int_Q w(x)^{1+\epsilon} dx\big)^{1/(1+\epsilon)}\le \frac{C}{|Q|} \int_Q w(x) dx$
for any cube $Q$ \cite{fourieranalysisbook, steinbook93}. In this subsection, we consider the  reverse H\"older inequality for weights $w^r, r\in [0,1]$.

\begin{prop}
\label{apweight.prop3}
Let $1\le p<\infty$ and $w\in {\mathcal A}_p$. Then there exist  a positive constant  $r_0$
 (depending on $p$ and $d$ only) such that
\begin{eqnarray}\label{apweight.prop3.eq1}
\big (2^{p+2} A_p(w)\big)^{-1} \Big(\frac{1}{|Q|} \int_Q w(x)^{(1+\delta) r} dx\Big)^{1/(1+\delta)} &\!\!\le &\!\!
\frac{1}{|Q|} \int_Q w(x)^{r} dx\nonumber\\
  &\!\! \le &\!\!  2^{p+2} A_p(w) \Big(\frac{1}{|Q|} \int_Q w(x)^{(1-\delta) r} dx\Big)^{1/(1-\delta)}
 \end{eqnarray}
hold for all cubes $Q$ and  positive numbers $r\in (0,1]$ and $\delta\in (0, r_0/A_p(w)]$.
\end{prop}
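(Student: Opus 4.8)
\emph{Reduction to $r=1$.} The plan is to first remove the parameter $r$ by the substitution $v:=w^{r}$. Recall from the proof of Proposition \ref{apweight.prop2} that $v\in{\mathcal A}_{1+r(p-1)}\subseteq{\mathcal A}_{p}$ with $A_{p}(v)\le A_{1+r(p-1)}(v)\le (A_{p}(w))^{r}\le A_{p}(w)$, so that $r_{0}/A_{p}(w)\le r_{0}/A_{p}(v)$, $2^{p+2}A_{p}(v)\le 2^{p+2}A_{p}(w)$, and $(2^{p+2}A_{p}(w))^{-1}\le (2^{p+2}A_{p}(v))^{-1}$. Since replacing $w$ by $v$ turns the exponents $(1\pm\delta)r$ and $r$ in \eqref{apweight.prop3.eq1} into $1\pm\delta$ and $1$, it suffices to prove: for every $A_{p}$-weight $v$, every cube $Q$, and every $\delta\in(0,r_{0}/A_{p}(v)]$,
\[
\big(2^{p+2}A_{p}(v)\big)^{-1}\Big(\tfrac{1}{|Q|}\int_{Q}v^{1+\delta}\,dx\Big)^{\frac{1}{1+\delta}}\le\tfrac{1}{|Q|}\int_{Q}v\,dx\le 2^{p+2}A_{p}(v)\Big(\tfrac{1}{|Q|}\int_{Q}v^{1-\delta}\,dx\Big)^{\frac{1}{1-\delta}} .
\]

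\emph{The low-exponent (right-hand) inequality.} This I would prove by an elementary argument that uses no smallness of $\delta$. Put $c_{Q}:=\tfrac{1}{|Q|}\int_{Q}\ln v\,dx$. Jensen's inequality \eqref{apweight.bmoprop.pf.eq2} applied to $x\mapsto e^{(1-\delta)x}$ gives $\tfrac{1}{|Q|}\int_{Q}v^{1-\delta}\,dx\ge e^{(1-\delta)c_{Q}}$, hence $\big(\tfrac{1}{|Q|}\int_{Q}v^{1-\delta}\,dx\big)^{1/(1-\delta)}\ge e^{c_{Q}}$; and, writing $\tfrac{1}{|Q|}\int_{Q}v\,dx=e^{c_{Q}}\tfrac{1}{|Q|}\int_{Q}e^{\ln v-c_{Q}}\,dx$, the $A_{p}$-condition in the form \eqref{apweight.bmoprop.pf.eq4} (for $p=1$, the defining inequality \eqref{apweight.eq2} together with $\inf_{Q}v\le e^{c_{Q}}$) gives $\tfrac{1}{|Q|}\int_{Q}v\,dx\le A_{p}(v)e^{c_{Q}}$. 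Combining the two yields the right-hand inequality with constant $A_{p}(v)\le 2^{p+2}A_{p}(v)$.

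\emph{The high-exponent (left-hand) inequality.} Here the exponent $1+\delta$ exceeds $1$, so the $A_{p}$-condition and the John--Nirenberg estimate behind Lemma \ref{apweight.lem2} --- which controls $\tfrac{1}{|Q|}\int_{Q}v^{s}\,dx$ only for $s\le D_{1}/(p\ln 2+2\ln A_{p}(v))$, a threshold that can lie well below $1$ when $A_{p}(v)$ is large --- give no bound on $\tfrac{1}{|Q|}\int_{Q}v^{1+\delta}\,dx$. The required integrability improvement is the quantitative reverse H\"older inequality: there exist $r_{0}=r_{0}(p,d)>0$ (the constant appearing in the proposition) and an absolute constant $C$ such that $\big(\tfrac{1}{|Q|}\int_{Q}v^{1+r_{0}/A_{p}(v)}\,dx\big)^{A_{p}(v)/(A_{p}(v)+r_{0})}\le C\,\tfrac{1}{|Q|}\int_{Q}v\,dx$ for every $A_{p}$-weight $v$ and cube $Q$; one can take $C=2$ via the sharp reverse H\"older inequality together with $[v]_{A_{\infty}}\le A_{p}(v)$. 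For $0<\delta\le r_{0}/A_{p}(v)$, the power-mean monotonicity of $s\mapsto\big(\tfrac{1}{|Q|}\int_{Q}v^{s}\,dx\big)^{1/s}$ then gives $\big(\tfrac{1}{|Q|}\int_{Q}v^{1+\delta}\,dx\big)^{1/(1+\delta)}\le\big(\tfrac{1}{|Q|}\int_{Q}v^{1+r_{0}/A_{p}(v)}\,dx\big)^{1/(1+r_{0}/A_{p}(v))}\le C\,\tfrac{1}{|Q|}\int_{Q}v\,dx\le 2^{p+2}A_{p}(v)\,\tfrac{1}{|Q|}\int_{Q}v\,dx$, i.e.\ the left-hand inequality.

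\emph{Main obstacle.} Everything above is short once the quantitative reverse H\"older inequality is in hand, so the one genuinely delicate point --- and the reason the admissible range of $\delta$ is $(0,r_{0}/A_{p}(w)]$ rather than $(0,\epsilon(A_{p}(w))]$ for some unspecified $\epsilon$ --- is securing the $1/A_{p}(v)$-scale in the reverse H\"older exponent while keeping its multiplicative constant below $2^{p+2}A_{p}(v)$. The purely qualitative form of the reverse H\"older inequality, in which the exponent gain is only ``some $\epsilon>0$ depending on $p,d,A_{p}(v)$'', is not enough for this; one should invoke the sharp reverse H\"older inequality stated through $[v]_{A_{\infty}}$ (and use $[v]_{A_{\infty}}\le A_{p}(v)$ with the crude bound $2\le 2^{p+2}A_{p}(v)$), or else rerun the Gehring-lemma proof keeping careful track of the constants. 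The only remaining bookkeeping is the case $p=1$, where \eqref{apweight.bmoprop.pf.eq4} is replaced by \eqref{apweight.eq2} as indicated.
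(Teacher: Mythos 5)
Your reduction to $r=1$ via $v:=w^{r}$ (using $A_{p}(w^{r})\le A_{1+r(p-1)}(w^{r})\le A_{p}(w)$) is sound and coincides with the paper's first step, and your argument for the right-hand inequality is correct and in fact simpler than the paper's: Jensen's inequality gives $\big(\frac{1}{|Q|}\int_{Q}v^{1-\delta}dx\big)^{1/(1-\delta)}\ge e^{c_{Q}}$, while the first estimate in \eqref{apweight.bmoprop.pf.eq4} (or \eqref{apweight.eq2} together with $\inf_{Q}v\le e^{c_{Q}}$ when $p=1$) gives $\frac{1}{|Q|}\int_{Q}v\,dx\le A_{p}(v)e^{c_{Q}}$, so that half holds with constant $A_{p}(v)$ and no smallness of $\delta$; the paper instead extracts it from the same Gehring-type computation it uses for the other half.

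The left-hand inequality, however, is where essentially all of the content of the proposition lies, and your proposal does not prove it: you invoke a quantitative ("sharp") reverse H\"older inequality with constant $2$ and exponent gain of size $r_{0}/A_{p}(v)$ as a black box. Up to the harmless power-mean monotonicity step, that cited statement \emph{is} the $r=1$ case of the inequality to be proved; it is not among the paper's tools or references, and, as you note yourself, the qualitative reverse H\"older inequality does not give the required $1/A_{p}$-scaling of the exponent with an $A_{p}$-controlled constant. The alternative you mention in passing --- "rerun the Gehring-lemma proof keeping track of the constants" --- is precisely what the paper's proof consists of: applying \eqref{apweight.eq5} (with $w$ replaced by $w^{r}$, whose $A_{p}$-bound is at most $A_{p}(w)$) to characteristic functions yields $\int_{E}w^{r}dx\ge A_{p}(w)^{-1}(|E|/|Q|)^{p}\int_{Q}w^{r}dx$, hence $\int_{E}w^{r}dx\big/\int_{Q}w^{r}dx\le (2^{p}A_{p}(w)-1)/(2^{p}A_{p}(w))$ whenever $|E|\le|Q|/2$; feeding this into the argument of \cite[pp.~202--203]{steinbook93} with $\delta\le\delta_{1}=(2^{p+3}(d+1)A_{p}(w))^{-1}$ produces the geometric series $\sum_{k\ge 0}2^{(d+1)(k+1)\delta}\big(1-\frac{1}{2^{p}A_{p}(w)}\big)^{k}$, bounded by $2^{p+2}A_{p}(w)$, which gives both halves of \eqref{apweight.prop3.eq1} with the stated constants and identifies $r_{0}$ explicitly in terms of $p$ and $d$. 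Until you either carry out this constant-tracking yourself or supply a precise citation (including the comparison of the $A_{\infty}$-type constant with $A_{p}(v)$ and a threshold of the exact form $c_{p,d}/A_{p}(v)$), the key half of the proposition is assumed rather than proved.
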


\begin{proof} We follow the argument in \cite[pp. 202--203]{steinbook93}. Let $r\in (0,1]$ and $w\in {\mathcal A}_p$ for some $1\le p<\infty$.
Then $w^r\in {\mathcal A}_{1+r(p-1)}\subset {\mathcal A}_{p}$ and
$A_p(w^r)\le A_{1+r(p-1)}(w^r)\le (A_p(w))^r\le A_p(w)$.
Therefore taking the characteristic function on a subset $E$ of a cube $Q$
in (\ref{apweight.eq5}) 
leads to
\begin{equation*}\label{apweight.prop3.pf.eq2}
\frac{\int_E w(x)^r dx}{\int_Q w(x)^r dx}\ge \frac{1}{A_p(w)}\Big(\frac{|E|}{|Q|}\Big)^p
\end{equation*}
for any subset $E\subset Q$.
This implies that for all cubes $Q$ and  subsets $E\subset Q$ with $|E|\le |Q|/2$,
\begin{equation*}\label{apweight.prop3.pf.eq3}
\frac{\int_E w(x)^r dx}{\int_Q w(x)^r dx}\le 1-\frac{1}{A_p(w)}
\Big(\frac{|Q-E|}{|Q|}\Big)^p\le\frac{2^pA_p(w)-1}{2^p A_p(w)}.
\end{equation*}
Let $\delta_1= \big(2^{p+3} (d+1) A_p(w)\big)^{-1}$. Then
$2^{2(d+1)\delta_1}(1-\frac{1}{2^p A_p(w)})\le (1-\frac{1}{2^{p+1} A_p(w)})$
 and for any $\delta\in (0, \delta_1]$, following the steps in \cite[pp.202--203]{steinbook93} we get
\begin{eqnarray*} \label{apweight.prop3.pf.eq6}
  & & \Big(\frac{1}{|Q|} \int_{Q} w(x)^{r(1+\delta)}dx\Big)^{1/(1+\delta)}\le  \Big(1+\sum_{k=0}^\infty
2^{(d+1)(k+1)\delta}\Big(1-\frac{1}{2^p A_p(w)}\Big)^k\Big)^{1/(1+\delta)}
\nonumber\\
 &   &
\qquad \quad\times \Big(\frac{1}{|Q|} \int_{Q} w(x)^{r}dx\Big)
\le 2^{p+2} A_p(w) \Big(\frac{1}{|Q|} \int_{Q} w(x)^{r}dx\Big) 
\end{eqnarray*}
and
\begin{eqnarray*} 
  & & \frac{1}{|Q|} \int_{Q} w(x)^{r}dx
  \le  \Big(1+\sum_{k=0}^\infty
2^{(d+1)(k+1)\delta/(1-\delta)}\Big(1-\frac{1}{2^p A_p(w)}\Big)^k\Big)\nonumber\\
& & \qquad  \times \Big(\frac{1}{|Q|} \int_{Q} w(x)^{r(1-\delta)}dx\Big)^{1/(1-\delta)}\le
2^{p+2} A_p(w) \Big(\frac{1}{|Q|} \int_{Q} w(x)^{r(1-\delta)}dx\Big)^{1/(1-\delta)}. 
\end{eqnarray*}
This establishes  \eqref{apweight.prop3.eq1} and completes the proof.
\end{proof}

\subsection{Discrete Muckenhoupt weights}
Muckenhoupt $A_p$-weights and discrete $A_p$-weights  are closely related.
Given a  discrete  $A_p$-weight $w=(w(k))_{k\in \Zd}$, one may verify that
$\tilde w:=\sum_{k\in \Zd} w(k) \chi_{[-1/2,1/2)^d}(\cdot-k)$
  is an  $A_p$-weight with its $A_p$-bound comparable to the $A_p$-bound of the discrete weight $w$.
   Conversely,  discretization of an $A_p$-weight at any level is a discrete $A_p$-weight.

\begin{prop}\label{discreteapweight.prop1}
Let $1\le p<\infty$ and  $w$ be an $A_p$-weight, and define
 \begin{equation*}\label{discreteapweight.eq1}
 w_n(k)=2^{nd} \int_{2^{-n}(k+[-1/2,1/2)^d)} w(x) dx,\quad n\in \ZZ, k\in \Zd.\end{equation*}
Then for any $n\in \ZZ$, $w_n:=(w_n(k))_{k\in \Zd}$ is  a discrete $A_p$-weight with its $A_p$-bound  dominated by the $A_p$-bound of the weight $w$, i.e.,
$A_p(w_n)\le A_p(w)$.
\end{prop}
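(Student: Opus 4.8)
The plan is to read off the discrete $A_p$ condition for $w_n$ directly from the continuous $A_p$ condition for $w$, exploiting two elementary facts: a union of dyadic sub-cubes is again a cube, and Jensen's inequality lets one compare the average of $w$ over a small cube with the average of $w^{-1/(p-1)}$ over the same cube.

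First I would fix $n\in\ZZ$, $a\in\Zd$ and $N\in\NN$, and write $Q_k:=2^{-n}(k+[-1/2,1/2)^d)$ for $k\in\Zd$, so that $|Q_k|=2^{-nd}$ and $w_n(k)=|Q_k|^{-1}\int_{Q_k}w(x)\,dx$ is precisely the $w$-average over $Q_k$. As $k$ runs over $a+[0,N-1]^d$, the cubes $Q_k$ tile the cube $Q:=2^{-n}a+2^{-n}[-1/2,N-1/2)^d$ of side length $2^{-n}N$ and measure $|Q|=2^{-nd}N^d$; hence
\[
N^{-d}\sum_{k\in a+[0,N-1]^d} w_n(k)=\frac{2^{nd}}{N^d}\int_Q w(x)\,dx=\frac{1}{|Q|}\int_Q w(x)\,dx .
\]

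For $1<p<\infty$, I would then apply Jensen's inequality to the convex function $t\mapsto t^{-1/(p-1)}$ on $(0,\infty)$ against the normalized measure $|Q_k|^{-1}\,dx$ on $Q_k$, which gives
\[
(w_n(k))^{-1/(p-1)}=\Big(\frac{1}{|Q_k|}\int_{Q_k}w\,dx\Big)^{-1/(p-1)}\le \frac{1}{|Q_k|}\int_{Q_k}w(x)^{-1/(p-1)}\,dx .
\]
Summing over $k\in a+[0,N-1]^d$ and using the tiling once more yields $N^{-d}\sum_k (w_n(k))^{-1/(p-1)}\le |Q|^{-1}\int_Q w(x)^{-1/(p-1)}\,dx$; multiplying by the previous display and invoking \eqref{apweight.eq1} for the cube $Q$ gives $A_p(w_n)\le A_p(w)$. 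For $p=1$, I would instead note $w_n(k)=|Q_k|^{-1}\int_{Q_k}w\,dx\ge \inf_{Q_k}w\ge \inf_Q w$, so $\inf_{k\in a+[0,N-1]^d}w_n(k)\ge\inf_Q w$, and combine this with $N^{-d}\sum_k w_n(k)=|Q|^{-1}\int_Q w\,dx\le A_1(w)\inf_Q w$ from \eqref{apweight.eq2}.

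There is no real obstacle here; the only care needed is bookkeeping — checking that the index block $a+[0,N-1]^d$ corresponds to a genuine tiling of a cube $Q$ of side $2^{-n}N$, and that Jensen's inequality is applied in the correct direction (convexity of $t^{-1/(p-1)}$ pushes the inequality the way we want, and the needed local integrability of $w^{-1/(p-1)}$ is part of the hypothesis that $w\in{\mathcal A}_p$). Since nothing in the argument depends on $n$, the bound $A_p(w_n)\le A_p(w)$ holds simultaneously at every level, as claimed.
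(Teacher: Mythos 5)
Your proof is correct and follows essentially the same route as the paper: tile the big cube $Q=2^{-n}a+2^{-n}[-1/2,N-1/2)^d$ by the dyadic cells $Q_k$, dominate $(w_n(k))^{-1/(p-1)}$ by the average of $w^{-1/(p-1)}$ over $Q_k$ (your Jensen step is exactly the paper's per-cell inequality $1\le\big(\fint_{Q_k}w\big)\big(\fint_{Q_k}w^{-1/(p-1)}\big)^{p-1}$ in H\"older form), and then apply the continuous $A_p$ condition on $Q$. Your explicit treatment of $p=1$ matches what the paper leaves as a "similar argument."
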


\begin{proof}
 Let $1<p<\infty$ and $n\in \ZZ$. Given $a\in \Zd$ and $N\in \NN$,
  \begin{eqnarray*}
 & & \Big(\frac{1}{N^d} \sum_{k\in a+[0,N-1]^d} w_n(k)\Big)\Big(\frac{1}{N^d} \sum_{k\in a+[0,N-1]^d} (w_n(k))^{-1/(p-1)}\Big)^{p-1}\\
  & \le & \Big(\frac{1}{2^{-nd} N^d} \int_{2^{-n}a+2^{-n}[-1/2, N-1/2)^d} w(x)dx\Big)\\
  & &
  \times \Big(\frac{1}{2^{-nd}N^{d}}
  \int_{2^{-n}a+2^{-n}[-1/2, N-1/2)^d} w(x)^{-1/(p-1)} dx\Big)^{p-1} \le A_p(w)
  \end{eqnarray*}
   where the  first inequality follows from
 \begin{eqnarray*}
1 & \le & \Big(2^{nd}\int_{2^{-n}k+2^{-n}[-1/2, 1/2)^d} w(x) dx\Big)\\
& & \times \Big(2^{nd}\int_{2^{-n}k+2^{-n}[-1/2, 1/2)^d}  w(x)^{-1/(p-1)} dx\Big)^{p-1}\quad {\rm for \ all} \  k\in \Zd,
 \end{eqnarray*}
 and the second inequality holds as  $ |2^{-n}a+2^{-n}[-1/2, N-1/2)^d|=2^{-nd} N^d$.

 The conclusion for $p=1$ can be proved by similar argument.
\end{proof}

\end{appendix}

\begin{thebibliography}{999}

\bibitem{barnes90} B. A. Barnes, When is the spectrum of a
convolution operator on $L^p$ independent of $p$? {\em Proc.
Edinburgh Math. Soc.}, {\bf 33}(1990), 327--332.

\bibitem{baskakov11} A. G. Baskakov and I. Krishtal, Memory estimation of inverse operators, arXiv:1103.2748

\bibitem{belinskii97} E. S. Belinskii, E. R. Lifyand, and R. M. Trigub, The Banach algebra $A^*$ and its properties, {\em J. Fourier Anal. Appl}, {\bf 3}(1997), 103--129.

\bibitem{beurling49} A. Beurling, On the spectral synthesis of bounded functions, {\em Acta Math.},  {\bf 81}(1949),
 225–-238.

\bibitem{brandenburg75} L. Brandenburg, On identifying the maximal ideals in Banach algebra, {\em J. Math. Anal. Appl.}, {\bf 50}(1975), 489--510.

\bibitem{daubecheiesbook} I. Daubechies, {\em Ten Lectures on Wavelets}, CBMS-NSF Regional Conference Series in Applied Mathematics,
 SIAM, 1992.

\bibitem{fourieranalysisbook} J. Duoandikoetxea, {\em Fourier Analysis}, Amer. Math. Soc., 2000.

\bibitem{farrell10} B. Farrell  and T. Strohmer,
Inverse-closedness of a Banach algebra of integral operators on the Heisenberg group, {\em J. Operator Theory},
{\bf 64}(2010), 189--205.

\bibitem{rubiobook} J. Garcia-Cuerva and J.-L. Rubio De Francia,
{\em  Weighted Norm Inequalities and Related Topics}, Elsevier, 1985.

\bibitem{grochenigsurvey} K. Gr\"ochenig, Wiener's lemma: theme and variations, an introduction to
spectral invariance and its applications, In {\em Four Short Courses on Harmonic Analysis: Wavelets, Frames, Time-Frequency Methods, and Applications to Signal and Image Analysis}, edited by P. Massopust and B. Forster,
Birkhauser, Boston 2010.

\bibitem{hulanicki} A. Hulanicki, On the spectrum of convolution
operators on groups with polynomial growth, {\em Invent. Math.},
{\bf 17}(1972), 135--142.

\bibitem{kurbatovbook} V. G. Kurbatov, {\em Functional Differential Operators and Equations}, Kluwer Academic Publishers,
1999.

\bibitem{pytlik} T. Pytlik, On the spectral radius of elements in
group algrebras, {\em Bull. Acad. Polon. Sci. Ser. Sci. Math.},
{\bf 21}(1973), 899--902.

\bibitem{shincjfa09} C. E. Shin and Q. Sun,
 Stability of localized operators, {\em J. Funct. Anal.}, {\bf 256}(2009), 2417-2439.

\bibitem{steinbook93} E. M. Stein, {\em Harmonic Analysis: Real-Variable Methods, Orthogonality, and Oscillatory Integrals}, Princeton University Press, 1993.

\bibitem{steinbook70} E. M. Stein, {\em  Singular Integrals and Differentiability Properties of Functions},  Princeton University Press, 1970.

\bibitem{suntams07} Q. Sun, Wiener's lemma for infinite matrices,
{\em Trans. Amer. Math. Soc.},  {\bf 359}(2007), 3099--3123.

\bibitem{sunconst10}
Q. Sun, Wiener's lemma for infinite matrices II, {\em Constr. Approx.}, DOI: 10.1007/s00365-010-9121-8

\bibitem{sunacha08} Q.  Sun, Wiener's lemma for localized integral operators, {\em Appl. Comput. Harmonic Anal.}, {\bf 25}(2008), 148--167.

\end {thebibliography}
\end{document}